\documentclass[12pt]{article}

\usepackage{amsmath,amssymb,mathrsfs,graphicx,bbm,amsfonts,amsthm,mathtools}
\usepackage{wrapfig}
\usepackage[utf8x]{inputenc}
\usepackage{dsfont}
\usepackage[colorlinks=true, linkcolor=blue, citecolor=blue, urlcolor=blue]{hyperref}

\usepackage{color}
\usepackage{comment}
\usepackage{enumitem}
\usepackage{dsfont}
\usepackage[normalem]{ulem}
\usepackage[english]{babel}
\usepackage{soul}
\usepackage{xcolor}

\usepackage{todonotes}

\usepackage{comment}

\usepackage[scriptsize,bf]{caption}

\newtheorem{theorem}{Theorem}[section]
\newtheorem{lemma}[theorem]{Lemma}
\newtheorem{proposition}[theorem]{Proposition}
\newtheorem{corollary}[theorem]{Corollary}
\newtheorem{definition}[theorem]{Definition}

\newtheorem{remark}[theorem]{Remark}

\newtheorem{assumption}[theorem]{Assumption}
\newtheorem{claim}[theorem]{Claim}

\DeclareMathAlphabet{\mathbfit}{OML}{cmm}{b}{it}

\makeatletter
\@addtoreset{equation}{section}
\makeatother

\newcommand{\ER}{Erd\H{o}s-Rényi\ }

\newcommand{\cadlag}{\ifmmode\mathcal D\else c\`adl\`ag \fi}

 \newcommand{\ba}{\begin{array}}
 \newcommand{\ea}{\end{array}}
 \newcommand{\bea}{\begin{eqnarray}}
 \newcommand{\eea}{\end{eqnarray}}
 \newcommand{\be}{\begin{equation}}
  \newcommand{\ee}{\end{equation}}

 \def \R {{\mathbb R}}
 
 \def \N {{\mathbb N}}
 \def \P {{\mathbb P}}

 \def \a {{\alpha}}

 \def \z {{\z}}

 \def \z {{\zeta}}

 \def \1{\mathbbm{1}} 

\begin{document}
\title{ERGMs on block models}
\author{E. Magnanini\footnote{Weierstrass Institute for Applied Analysis and Stochastics, Anton-Wilhelm-Amo-Straße 39,, 10117 Berlin, Germany.}}
\maketitle
\abstract{We extend the classical edge-triangle Exponential Random Graph Model (ERGM) to an inhomogeneous setting in which vertices carry types determined by an underlying partition. This leads to a block-structured ERGM where interaction parameters depend on vertex types.
We establish a large deviation principle for the associated sequence of measures and derive the corresponding variational formula for the limiting free energy. In the ferromagnetic regime, where the parameters governing triangle densities are nonnegative, we reduce the variational problem to a scalar optimization problem, thereby identifying the natural block counterpart of the replica symmetric regime.
Under additional restrictions on the parameters, comparable to the classical Dobrushin uniqueness region, we prove uniqueness of the maximizer and derive a law of large numbers for the edge density.}
\noindent  \bigskip
\\
{\bf Keywords:} Edge-triangle model, free energy, block models, Euler-Lagrange equations, limit theorems.   
\\\\
{\bf AMS Subject Classification 2010:} 05C80; 60F15; 82B05.
\tableofcontents
\section{Introduction} \label{intro}
Social networks, like many biological and technological networks, are known to exhibit a high degree of \emph{clustering}, also referred to as \emph{transitivity}.
Informally, if two nodes share a common neighbor, they are more likely to be linked to each other as well. 
A central modelling challenge is to relate such \emph{local features}, captured by the frequency of small subgraphs and therefore easily measurable, to the \emph{global structure} of the network.
Among probabilistic models for social networks (see e.g.\cite{robins2014exponential}), \emph{Exponential Random Graph Models} (ERGMs) have a particularly transparent statistical justification. 
More precisely, if one prescribes the empirical values of finitely many network statistics (such as the number of edges, triangles, or other small subgraphs), then among all probability distributions on graphs consistent with these constraints, the entropy-maximizing distribution is a Gibbs measure \cite{jaynes1957information}.
Focusing on unconstrained, undirected ERGMs, a substantial body of rigorous probabilistic results has been developed. These include the derivation of the limiting free energy and the analysis of its phase diagram, as well as more general results on the asymptotic structure and typical behavior of such graphs (see, e.g., \cite{CD, CDey, RY, MY, AZ, eldan2018exponential}).
A complementary line of research investigates fluctuation phenomena and limit theorems for subgraph densities, obtained both through statistical mechanics techniques (see, e.g., \cite{BCM, MP, BCM22, magnanini2025standard, MX}) and via Stein’s method, the latter also providing quantitative normal approximations \cite{fang2024normal, SaSi, winstein2025quantitative}.

Classical ERGMs are typically \emph{homogeneous}, in the sense that their law is invariant under relabellings of the vertex set. Real networks, however, often exhibit structural heterogeneity: vertices carry attributes (or types) that significantly influence link formation.
In this work we study \emph{inhomogeneous ERGMs} in which heterogeneity is introduced through an underlying partition of the vertex set into finitely many blocks. Each vertex is assigned a type (or color) according to the block to which it belongs. Thus, in the underlying reference measure, the probability to create an edge between two vertices depends on their types. Models of this kind were originally introduced in the physics literature \cite{fronczak2013exponential} to describe community organization, where types may represent political orientation, scientific field, thematic similarity of webpages, or other attributes.

From a probabilistic point of view, inhomogeneous ERGMs can be seen as exponential tilts of dense inhomogeneous \ER graphs, in the same way that classical ERGMs arise as tilts of the homogeneous \ER model. 
Vertex types enter the \emph{Hamiltonian} through the interaction parameters that weight the various subgraph densities, depending on the types of the vertices involved in those subgraphs.
For instance, one may assign a higher weight to triangles formed by vertices of the same type than to mixed-type configurations, thereby reflecting an underlying notion of proximity, such as shared interests or geographical closeness.

Among recent developments on block-structured graph models, we mention the derivation of a Large Deviation Principle (LPD) in \cite{GP, borgs2020large}, together with advances in the closely related theory of \emph{probability graphons} \cite{abraham2025probability, dionigi2025large}, which extend the classical graphon framework to settings where edges carry additional decorations.

\subsection{Our contribution}
The main novelty of this paper is to provide a step further towards the understanding of inhomogeneous ERGMs. 
We show that the variational principle available for the free energy for homogeneous ERGMs can be transferred to this richer setting, yielding a well-posed variational problem under minimal structural assumptions on the underlying partition. This leads to an explicit characterization of the maximizers and to a natural block-structured counterpart of the \emph{replica symmetric regime}. Outside this regime, we expect effects associated with \emph{symmetry breaking} and phase transitions, whose analysis we defer to future work.
The techniques of the proofs rely on classical tools, suitably adapted to the block structure of the model. 
In what follows, we briefly list our main results. 
\begin{enumerate}
    \item In Thm.~\ref{thm:LDP-free-energy}, we prove a large deviation principle for the associated sequence of measures. From the corresponding rate function, we derive the variational formula for the free energy; an alternative direct derivation is provided in Appendix~\ref{alternative_der_fe}.
    
    \item In Theorem~\ref{thm:EL-alphaijk}, we derive the Euler-Lagrange equations that characterize any optimizer of the variational formula, without imposing restrictions on the parameter regime. 
    
    \item In Thm.~\ref{thm:scalar_probl}, assuming nonnegativity of parameters tuning the triangle densities, we reduce the variational problem for the free energy to a scalar optimization problem, leading to the fixed-point system \eqref{eq:fixed-point-scalar}. In Thm.~\ref{prop:small-alpha-tensor}, we identify a restricted parameter regime (comparable to the so-called Dobrushin uniqueness region) under which this system admits a unique solution. Combining these results, we obtain our uniqueness statement in Cor.~\ref{cor:unique-maximiser}.
    \item Finally, relying on the uniqueness result, we establish a strong law of large numbers for the edge density, extending \cite[Thm.~3.10]{BCM}.
     
\end{enumerate}

\subsection{Structure}

The rest of the paper is organized as follows.

\begin{enumerate}
    \item In Sec.~\ref{sec_results} we introduce the model, the terminology, and our main assumption on the partition (see Assumption~\ref{ass:partition}).
    
    \item Subsec.~\ref{sec:color_Graphons} contains the main definitions, including the notion of \emph{colored graphons} (and their associated space), as well as Def.~\ref{def:eq_Rel} of the equivalence relation.
    
    \item In Subsec.~\ref{sec:main_res} we state our main results, while Sec.~\ref{sec_proofs} is devoted to their proofs.
    
    \item Finally, Appendix~\ref{sec_append} collects auxiliary lemmas and provides a direct derivation of the free energy, as an alternative to applying the Legendre transform to the rate function.
\end{enumerate}

\section{Preliminaries}\label{sec_results}

\subsection{The model}\label{subs:model}
Let $n\in\mathbb N$. We denote by $\mathcal G_n$ the set of all simple undirected graphs on the vertex set
$[n]:=\{1,\dots,n\}$.
For each $G\in\mathcal G_n$, let
\[
X=(X_{ij})_{1\le i,j\le n}
\]
be its adjacency matrix, i.e.\ the $n\times n$ symmetric matrix with entries in
$\{0,1\}$ defined by
\[
X_{ij}
=
\begin{cases}
1, & \text{if } \{i,j\}\text{ is an edge of }G, \qquad i\neq j\\
0, & \text{otherwise}
\end{cases}.
\]
We denote by $\mathcal A_n$ the set of all such adjacency matrices.
Beyond the underlying graph, we endow the vertex set with a finite coloring.
This is achieved by fixing a partition of the vertex set $[n]$ into $k$
(nonempty) classes, which represent the different colors.
Formally, we consider a collection
\begin{equation}\label{def:part_1}
\mathbf B^{(k)}\equiv \mathbf B^{(k)}_{n}=\bigl(B^{(n)}_1,\dots,B^{(n)}_k\bigr)
\end{equation}
of disjoint subsets of $[n]$ whose union is $[n]$.
The interpretation is that two vertices $i,j\in[n]$ have the same color if and only if
they belong to the same element (\emph{block}) $B^{(n)}_a$, $a\in[k]$ of the partition. In what follows, we consider only partitions $\mathbf B^{(k)}_{n}$ satisfying the following assumption (which may be viewed as the analogue of \cite[Eq.~2.2]{BR}).

\begin{assumption}[Partitions] \label{ass:partition}
Fix $k\in\mathbb N$ and a vector of block lengths
${\bf b} := (b_1,\dots,b_k)$ with $b_i>0$ and $\sum_{i=1}^k b_i = 1$.
Let $\mathcal B^{(k)} = (\mathcal B_1,\dots,\mathcal B_k)$ be a partition of
$[0,1]$ into consecutive intervals with Lebesgue measure $\lambda(\mathcal B_i) = b_i$.\\
For each $n\ge1$ consider a partition
$\mathbf B^{(k)} = (B^{(n)}_1,\dots,B^{(n)}_k)$ of $[n]$
into consecutive intervals with cardinalities $w^{(n)}_i:=|B^{(n)}_i|$ satisfying
\begin{equation}\label{eq:block-proportions}
\sum_{i=1}^k w^{(n)}_i = n \quad \text{and} \quad  \frac{w^{(n)}_i}{n} \longrightarrow b_i
  \qquad\text{as }n\to\infty,\quad i=1,\dots,k.
\end{equation}

Equivalently, $\mathcal B^{(k)}$ may be viewed as the limiting coloring of the unit interval $[0,1]$.
\end{assumption}

We denote by $\mathcal{G}^{(\bf{B})}_{n}:=\{(G,{\bf{B}}^{(k)}): G\in \mathcal{G}_n\}$ the set of colored graphs. 
In view of the bijection between the sets $\mathcal{A}_n$ and $\mathcal{G}_n$, we may regard at the Hamiltonian of the \emph{edge-triangle model on blocks} as a function from
$\mathcal{A}^{(\bf{B})}_{n}:=\{(X,{\bf{B}}^{(k)}): X\in \mathcal{A}_n\}$
 onto $\mathbb{R}$,  defined by
 
\begin{align}\label{Hamilt_ERG2}
\mathcal{H}^{(\mathbf B)}_{n;\boldsymbol\alpha,\mathbf h}(X)
&:= \frac{1}{n}\sum_{i,j,\ell\in[k]} \alpha_{ij\ell}
\sum_{\substack{u\in B_i,\ v\in B_j,\ w\in B_\ell \\ u<v<w}}
X_{uv}\,X_{vw}\,X_{uw}\\
&\hspace{4cm}+\sum_{i,j \in [k]} h_{ij}\sum_{\substack{u\in B_i,\ v\in B_j\\ u<v}} X_{uv},\notag
\end{align}
where $\boldsymbol{\alpha}:=(\alpha_{ij\ell})_{i,j,\ell\in[k]}\in \R^{k\times k\times k}$ is a collection of
triangle interaction parameters that encode the relative strength of triangle
interactions across communities (for instance, triangles formed by vertices in the same
community may be favored over mixed configurations). 
We assume that the triangle parameters 
$\boldsymbol\alpha=(\alpha_{ij\ell})_{i,j,\ell\in[k]}$
are symmetric under permutations of the indices (for instance, triangles with one vertex in $B_1$ and two vertices in $B_3$
are assigned the same weight, i.e.
$\alpha_{133}=\alpha_{313}=\alpha_{331}$).
Since the constraint $u<v<w$ ensures that each triangle (resp. edge) is counted exactly once in 
\eqref{Hamilt_ERG2}, this symmetry is purely a structural modeling assumption 
and does not influence the combinatorial counting.
Similarly, 
$\mathbf h:=(h_{ij})_{i,j\in[k]} \in \R^{k\times k}$ is a symmetric matrix of edge weights.

\begin{remark}[Notation]\label{notation}
To avoid overloading the notation, we will not always make the dependence on the
partition explicit. Strictly speaking, the argument of \eqref{Hamilt_ERG2} should be the pair
$(X,\mathbf B^{(k)})$. 
However, in many situations it is more convenient to indicate the dependence on
the partition at the level of functions of colored graphs rather than in their
arguments. Accordingly, when the partition is clear from the context, we will
simply write $X$ (and similarly $G$ for the underlying graph).
When convenient, we will also omit the superscript $n$ in the notation
$B_i^{(n)}$, $i\in[k]$.
\end{remark}

We define the Gibbs probability density associated with \eqref{Hamilt_ERG2} by
$\mu^{(\bf{B})}_{n;\boldsymbol{\a},\bf{h}}=\frac{\exp{[\mathcal{H}^{(\bf{B})}_{n;\boldsymbol{\alpha},\bf{h}}}(X)]}{Z^{(\bf{B})}_{n;\boldsymbol{\alpha},\bf{h}}}$, which we view as a function on $\mathcal A^{(\mathbf B)}_n$.
The normalizing constant  $Z^{(\bf{B})}_{n;\alpha,\bf{h}}$, called {\em partition function}, is given by
\begin{equation}\label{pf}
Z^{(\bf{B})}_{n;\alpha,\bf{h}}:=\sum_{X \in \mathcal A^{(\mathbf B)}_n} \exp \left(\mathcal{H}^{(\bf{B})}_{n;\alpha,\bf{h}}(X)\right).
\end{equation}
We denote by $\mathbb P^{(\mathbf B)}_{n;\boldsymbol\alpha,\mathbf h}$ the corresponding
Gibbs measure, and by $\mathbb E^{(\mathbf B)}_{n;\boldsymbol\alpha,\mathbf h}$
the associated expectation.

\begin{remark}[Reduction to \ER model]\label{gen_Z}
Fix $\boldsymbol\alpha=0$.
\begin{enumerate}
\item If $k=1$ (so that $\mathbf h\equiv h\in\R$), the partition function \eqref{pf}
reduces to
\[
Z_{n;h}
=
\sum_{m=0}^{\binom{n}{2}}
\binom{\binom{n}{2}}{m}\,e^{hm},
\]
which corresponds to the classical dense Erdős-Rényi model with connection probability $\frac{e^{h}}{1+e^{h}}$.
\item If instead $k>1$, 
the partition function factorizes as
\begin{equation}\label{eq:Z_toy}
Z^{(\mathbf B)}_{n;\mathbf h}
=
\prod_{1\le i<j\le k}
\left(
\sum_{m=0}^{w_i w_j}
\binom{w_i w_j}{m}\,e^{h_{ij}m}
\right)
\prod_{i=1}^k
\left(
\sum_{m=0}^{\binom{w_i}{2}}
\binom{\binom{w_i}{2}}{m}\,e^{h_{ii}m}
\right),
\end{equation}
where $w_i:=|B^{(n)}_i|$ (see Assumption~\ref{ass:partition}). 
The binomial coefficients count the number of possible choices of 
inter and intra-community edges, respectively.
This extends the previous case to a dense inhomogeneous Erd\H{o}s--R\'enyi model
with connection probabilities $p_{ij}=\frac{e^{h_{ij}}}{1+e^{h_{ij}}}$, $i,j\in[k]$.\\
For instance, in the toy case $n=3$ and $k=2$, with $B_1=\{1,2\}$ and $B_2=\{3\}$,
Eq.~\eqref{eq:Z_toy} yields
\[
Z^{(\mathbf B)}_{3;\mathbf h}
=
\left(\sum_{m=0}^{2}\binom{2}{m}e^{h_{12}m}\right)
\left(\sum_{m=0}^{1}\binom{1}{m}e^{h_{11}m}\right),
\]
which collapses to $Z_{3;h}=1+ 3e^{h}+3e^{2h}+ e^{3h}$ when $k=1$.
\end{enumerate}
\end{remark}

\subsection{The space of colored graphons}\label{sec:color_Graphons}
In this section we introduce the main objects involved in our limiting results,
namely graphons and their colored counterparts.
The limit of a sequence of graphs can be represented by a measurable and symmetric function $g:[0,1]^2 \rightarrow [0,1]$, called {\em graphon}. The set of all graphons is denoted by $\mathcal{W}$.\\
Any sequence of graphs that converges in the appropriate way has a graphon as limit. Conversely, every graphon arises as the limit of an appropriate graph sequence.
\begin{remark}\label{empiric_graphon}
Any finite simple graph admits a graphon representation, called checkerboard graphon. 
\end{remark}
\begin{definition}[Checkerboard graphon]\label{def:checkg}
Let $H$ be a finite simple graph $H$ with vertex set $[m]$. The \emph{checkerboard} graphon $g^H$, corresponding to $H$, is defined by
\begin{equation}\label{graph_embedding}
g^H(x,y) = \left\{
\begin{array}{ll}
1 & \text{ if $\{\lceil mx \rceil, \lceil my \rceil\}$ is an edge in $H$}\\
0 & \text{ otherwise}
\end{array},
\right.
\end{equation}
where $(x,y) \in [0,1]^2$. In other words, $g^H$ is a step function corresponding to the adjacency matrix of $H$. This representation allows all simple graphs, regardless of the number of vertices, to be represented as elements of the  space $\mathcal{W}$.
\end{definition}

\begin{figure}[h!]
		\centering
		\includegraphics[scale=0.75]{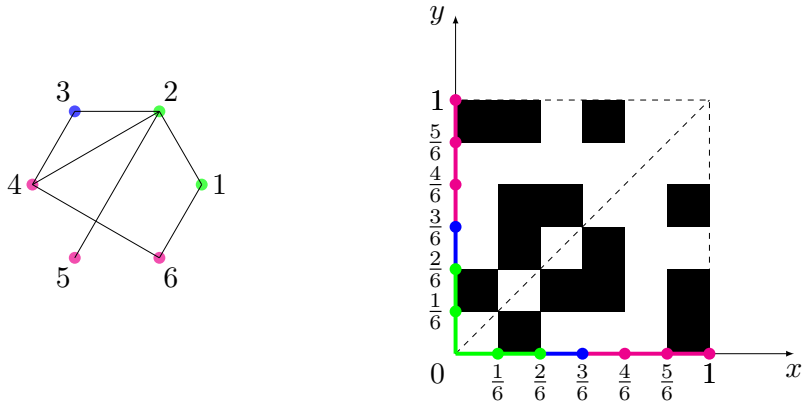}
		\caption{ \small Graph $H$ with $m=6$ vertices endowed with a partition into $k=3$ colors (left),
and the associated colored checkerboard graphon $g_{\mathcal{B}^{(3)}_n}^{H}$
(right).
		}
		\label{graphon}
	\end{figure}

Given a graph $H$, together with a partition $\mathbf B_n^{(k)}$ of the vertex
set, the associated checkerboard graphon $g^H$ naturally induces a partition
$\mathcal B_n^{(k)}$ of the unit interval $[0,1]$.
The pair $(g^H,\mathcal B_n^{(k)})$ will be referred to as a \emph{colored
checkerboard graphon} (see Fig.~\ref{graphon} for an illustration).
With this convention, there is a one-to-one correspondence between colored graphs
$(H,\mathbf B_n^{(k)})$ and colored checkerboard graphons $(g^H,\mathcal B_n^{(k)})$.
In the analysis of the limiting objects, however, we will work with the (fixed) partition
$\mathcal B^{(k)}$ introduced in Assumption~\ref{ass:partition}. We define the
space of $k$-colored graphons associated with $\mathcal B^{(k)}$ by
\begin{equation}\label{space_WB}
\mathcal W_{\mathcal B}^{(k)}
:=
\mathcal W \times \{\mathcal B^{(k)}\},
\end{equation}
equipped with the pseudo-metric $d_{\square}^{(k)}$  (the natural
extension of the cut distance to colored graphons, see \cite[Sec.~7]{GP}) given by
\begin{align}\label{cutcol}
&d_{\square}^{(k)}\bigl((g_1,\mathcal B^{(k)}),(g_2,\mathcal B^{(k)})\bigr)\notag\\
&\hspace{2cm}:=
\sup_{C,D\subseteq[0,1]}
\sum_{i,j\in[k]}
\left|
\int_{C\times D}
\mathds 1_{\mathcal B_i\times\mathcal B_j}(x,y)
\bigl(g_1(x,y)-g_2(x,y)\bigr)\,dx\,dy
\right|.
\end{align}
As for the standard homogeneous setting, one needs to take into account the arbitrary labeling of vertices, as they are embedded in the unit interval. To this aim, we introduce the following equivalence relation on $\mathcal{W}_{\mathcal{B}}^{(k)}$.
The idea is that two colored graphons are equivalent if they represent the same object up to a
measure-preserving relabeling of $[0,1]$ that does not mix the color classes. In other words, we are
allowed to reparametrize the underlying space, but only within each block of the fixed partition
$\mathcal{B}^{(k)}=(\mathcal B_1,\dots,\mathcal B_k)$, so that the coloring is kept unchanged (up to null sets).

\begin{definition}
Let $\Sigma_{\mathcal B}$ denote the set of all measure-preserving maps
$\sigma:[0,1]\to[0,1]$ such that
\[
\lambda\bigl(\sigma^{-1}(\mathcal B_i)\,\triangle\,\mathcal B_i\bigr)=0
\qquad\text{for all } i\in[k]\footnote{Given two sets $A$ and $B$, \(A \triangle B := (A \setminus B) \cup (B \setminus A)\) denotes the symmetric difference.},
\]
(recall that $\lambda(\cdot)$ denotes the Lebesgue measure).
For $\sigma\in\Sigma_{\mathcal B}$, we define
\[
(g,\mathcal B^{(k)})^\sigma := (g^\sigma,\mathcal B^{(k)}),
\qquad
g^\sigma(x,y):=g(\sigma(x),\sigma(y)).
\]
\end{definition}

\begin{definition}[Equivalence relation] \label{def:eq_Rel}
Two colored graphons
$(g_1,\mathcal B^{(k)})$, \newline $(g_2,\mathcal B^{(k)})\in\mathcal W_{\mathcal B}^{(k)}$
are said to be equivalent, and we write
$(g_1,\mathcal B^{(k)})\sim(g_2,\mathcal B^{(k)})$,
if there exists $\sigma\in\Sigma_{\mathcal B}$ such that
\[
(g_2,\mathcal B^{(k)})=(g_1,\mathcal B^{(k)})^\sigma.
\]
\end{definition}

The quotient space under $\sim$ is denoted by $\widetilde{\mathcal{W}}^{(k)}_{\mathcal{B}}$, and $\tau:(g,\mathcal{B}^{(k)}) \mapsto \widetilde{(g,\mathcal{B}^{(k)})}$ is the natural mapping associating a colored graphon with its equivalence class. 
Incorporating the equivalence relation $\sim$ in \eqref{cutcol} yields a distance
\begin{equation}\label{ct2}
\delta^{(k)}_\square\bigl(
\widetilde{(g_1,\mathcal B^{(k)})},
\widetilde{(g_2,\mathcal B^{(k)})}
\bigr)
=
\inf_{\sigma\in\Sigma_{\mathcal B}}
d_\square^{(k)}\bigl(
(g_1,\mathcal B^{(k)}),
(g_2,\mathcal B^{(k)})^\sigma
\bigr).
\end{equation}
\begin{figure}[h!]
		\centering
		\includegraphics[scale=0.75]{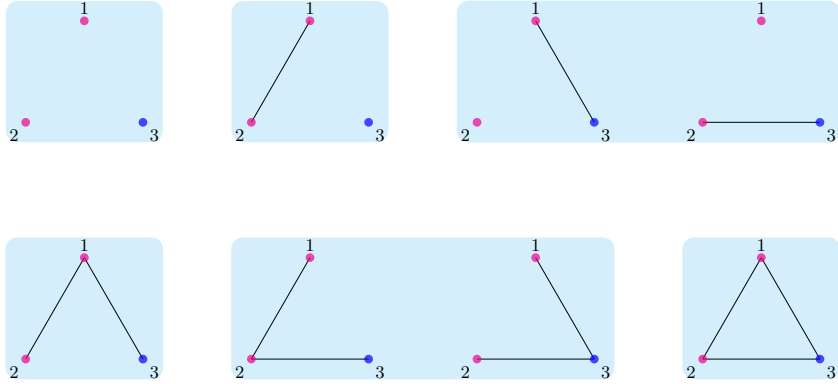}
		\caption{Example of a partition into equivalence classes (light-blue shaded) when $k=2$ and $n=3$.
		}
		\label{fig_equiv_classes}
	\end{figure}
Now, we set $g_{\mathcal{B}}:= (g,\mathcal{B}^{(k)})$ (and similarly $g^{G}_{\mathcal{B}_n}:= (g,\mathcal{B}_n^{(k)})$ for the colored checkerboard graphon) and we denote by $\tilde{g}^{(\mathcal{B})}$ its equivalence class (see Fig.~\ref{fig_equiv_classes} for an example of a partition into equivalence classes). 

From the fact that the graphon quotient space
$
(\widetilde{\mathcal W},\delta_\square)
$
is compact (\cite[Thm.~5.1]{LS07}), where $\delta_{\square}$ denotes the usual cut metric (\cite[Sec.~3]{LS07}), we deduce the following. 
\begin{lemma}[Compactness] \label{compact}
The space $(\widetilde{W}_{\mathcal{B}}^{(k)},\delta^{(k)}_{\square})$ is compact. 
\end{lemma}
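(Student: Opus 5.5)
We will prove compactness by showing sequential compactness: since $\delta^{(k)}_\square$ is a metric on the quotient $\widetilde{\mathcal W}^{(k)}_{\mathcal B}$, it suffices to show that every sequence $(\tilde g_n^{(\mathcal B)})_n$ has a subsequence converging in $\delta^{(k)}_\square$. Deducing this directly from the compactness of $(\widetilde{\mathcal W},\delta_\square)$ in \cite[Thm.~5.1]{LS07} is not enough. A relabeling $\varphi_n$ bringing $g_n$ close to a limit $g$ is an arbitrary measure-preserving map and need not preserve the blocks $\mathcal B_i$, so it does not lie in $\Sigma_{\mathcal B}$. We will therefore repeat the proof strategy of \cite{LS07} (weak regularity, martingale convergence and a diagonal argument), adapted so that every partition used refines $\mathcal B^{(k)}$ and every rearrangement acts within the blocks.

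\textbf{Step 1 (block-refined weak regularity).} Fix $g\in\mathcal W$ and $\varepsilon>0$. Apply the Frieze--Kannan weak regularity lemma and intersect the resulting partition with $\mathcal B^{(k)}$. This gives a partition $\mathcal P$ of $[0,1]$ refining $\mathcal B^{(k)}$, with at most $k\cdot 2^{O(\varepsilon^{-2})}$ parts, such that $d_\square(g,g_{\mathcal P})\le\varepsilon$, where $g_{\mathcal P}$ is the stepping of $g$ on $\mathcal P$. Refining a partition increases the number of parts but keeps a uniform bound for each $\varepsilon$. Moreover, $d^{(k)}_\square\le k^2 d_\square$ holds trivially, because each of the $k^2$ summands in \eqref{cutcol} is a cut-norm term over $(C\cap\mathcal B_i)\times(D\cap\mathcal B_j)$. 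So it is enough to control $d_\square$. As in \cite{LS07}, we build for each $g_n$ a nested sequence of partitions $\mathcal P_{n,1}\preceq\mathcal P_{n,2}\preceq\cdots$, all refining $\mathcal B^{(k)}$, with $d_\square(g_n,(g_n)_{\mathcal P_{n,m}})\le 1/m$ and at most $N_m$ parts, where $N_m$ does not depend on $n$.

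\textbf{Step 2 (block-preserving rearrangement).} Apply a measure-preserving bijection $\sigma_n\in\Sigma_{\mathcal B}$ so that each part of $\mathcal P_{n,m}$ becomes an interval, simultaneously for all $m$ by nestedness. Inside each block $\mathcal B_i$ we order the parts contained in $\mathcal B_i$ as consecutive subintervals of $\mathcal B_i$. This is possible precisely because every part lies inside a single block, which is where the refinement of Step 1 is used. After this rearrangement, $(g_n)_{\mathcal P_{n,m}}$ is a step function on at most $N_m$ intervals, and it is described by finitely many parameters: the interval lengths and the values in $[0,1]$.

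\textbf{Step 3 (diagonal extraction and martingale limit).} By a diagonal argument, extract a subsequence along which, for every $m$, the interval lengths and values converge. The step functions $(g_n)_{\mathcal P_{n,m}}$ then converge in $L^1$ to step functions $f_m$, whose steps refine $\mathcal B^{(k)}$ with the same block boundaries. The family $(f_m)_m$ is a bounded martingale with respect to the increasing $\sigma$-algebras generated by the limiting partitions, so $f_m\to g$ almost everywhere and in $L^1$ for some $g\in\mathcal W$. A triangle-inequality estimate then bounds $\delta^{(k)}_\square(\tilde g_n^{(\mathcal B)},\tilde g^{(\mathcal B)})$ by
\begin{align*}
k^2\Bigl(\|g_n^{\sigma_n}-(g_n)^{\sigma_n}_{\mathcal P_{n,m}}\|_\square+\|(g_n)^{\sigma_n}_{\mathcal P_{n,m}}-f_m\|_1+\|f_m-g\|_1\Bigr).
\end{align*}
This bound can be made arbitrarily small by first choosing $m$ large and then $n$ large.

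\textbf{Main obstacle.} The delicate point is Step 1 together with the compatibility of nestedness and block refinement. We need the iterated regularity partitions to stay nested, to refine $\mathcal B^{(k)}$, and to have part counts bounded uniformly in $n$. Once this holds, the rearrangement in Step 2 lies in $\Sigma_{\mathcal B}$ and the rest follows \cite{LS07} verbatim. Two smaller points need checking. First, the limiting interval lengths inside each block must sum to $b_i$, which holds because the lengths are fixed by $\lambda(\mathcal B_i)=b_i$. Second, parts whose length shrinks to zero can be discarded.
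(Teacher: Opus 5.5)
Your argument is correct in outline and takes a genuinely different route from the paper.

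\textbf{The paper's route.} The paper argues softly. It sends a class to the $k^2$-tuple of rescaled block kernels $G_{ij}(s,t)=g(\phi_i(s),\phi_j(t))$, each taken modulo its own relabeling. It asserts that this is an isometric embedding into $(\widetilde{\mathcal W})^{k^2}$, and deduces that the image is complete, hence closed, hence compact. This is short, but all its content sits in unproved claims:
\begin{itemize}
\item Independent relabelings forget that an element of $\Sigma_{\mathcal B}$ relabels $\mathcal B_i$ once for all blocks $G_{ij}$, $j\in[k]$. For $k=2$, take $A\subset\mathcal B_1$ with $\lambda(A)=b_1/2$. Compare the graphon equal to $1$ on $A\times A$ and on $A\times\mathcal B_2$ (symmetrized) with the one equal to $1$ on $A\times A$ and on $(\mathcal B_1\setminus A)\times\mathcal B_2$ (both $0$ elsewhere). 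They give the same tuple of classes. But the cell-restricted density of the path with edges $\{x_1,x_2\},\{x_2,y\}$, $x_1,x_2\in\mathcal B_1$, $y\in\mathcal B_2$, is $\lambda(A)^2b_2$ for the first and $0$ for the second. So the map is not even injective.
\item The off-diagonal $G_{ij}$ are not symmetric, so they are not elements of $\widetilde{\mathcal W}$.
\item ``Isometric embedding $\Rightarrow$ complete image'' presupposes completeness of $\widetilde{\mathcal W}^{(k)}_{\mathcal B}$, so closedness of the image is exactly what remains to be shown.
\end{itemize}

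\textbf{What your route buys.} Your opening observation identifies precisely this coupling problem. Redoing the regularity--martingale proof with block-refining partitions and relabelings in $\Sigma_{\mathcal B}$ handles it directly, at the cost of length. Your ``main obstacle'' is in fact harmless. Use the version of weak regularity that refines a given partition $\mathcal P$ into at most $|\mathcal P|\,2^{O(\varepsilon^{-2})}$ classes, started from $\mathcal P_{n,0}:=\mathcal B^{(k)}$. This gives nestedness, block refinement and $n$-independent part counts at once. If you instead intersect with $\mathcal B^{(k)}$ afterwards, use $\|g-g_{\mathcal Q}\|_\square\le 2\|g-g_{\mathcal P}\|_\square$ for $\mathcal Q$ refining $\mathcal P$, so you only lose a factor $2$.

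\textbf{One step needs repair.} In Step 2, a single bijection $\sigma_n$ turning the parts of all $\mathcal P_{n,m}$ into intervals at once need not exist. If the $\mathcal P_{n,m}$ generate a proper sub-$\sigma$-algebra while the nested intervals shrink to points, $\sigma_n$ would have to carry a proper sub-$\sigma$-algebra onto the full Borel one.

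It is also not needed. Fix nested interval assignments $S\mapsto J_{m,S}$, with $J_{m,S}\subseteq\mathcal B_i$ when $S\subseteq\mathcal B_i$ and $\lambda(J_{m,S})=\lambda(S)$. For each $m$ separately, choose $\sigma_{n,m}\in\Sigma_{\mathcal B}$ mapping $J_{m,S}$ onto $S$ up to null sets. Then $U_{n,m}:=(g_n)_{\mathcal P_{n,m}}\circ(\sigma_{n,m}\times\sigma_{n,m})$ is the interval step function carrying the block averages of $g_n$. By nestedness it is a martingale in $m$, and $\|g_n^{\sigma_{n,m}}-U_{n,m}\|_\square=\|g_n-(g_n)_{\mathcal P_{n,m}}\|_\square\le 1/m$. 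Since $\delta^{(k)}_\square$ is an infimum over relabelings, your three-term estimate goes through with $\sigma_{n,m}$ in place of $\sigma_n$.

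Also pass to a subsequence on which, for each $m$, the finite combinatorial data (number of parts per block, nesting pattern) is constant. Otherwise the diagonal extraction of lengths and values is not well defined.
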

\begin{proof}
Let 
$
F:\widetilde{W}^{(k)}_{\mathcal B}
\longrightarrow
(\widetilde{\mathcal W})^{k^2},
$
be the map that associates to the equivalence class of a pair
\((g,\mathcal B^{(k)})\)
the $k^2$-tuple of its block graphons \footnote{Given $g\in\mathcal W$, one can define them as 
$
G_{ij}(s,t):=g(\phi_i(s),\phi_j(t)),
$
$s,t\in[0,1]$, $i,j\in[k]$,
where $\phi_i$ is the measure-preserving bijection $\phi_i:([0,1],\lambda)\to(\mathcal B_i,\lambda_i)$, and $\lambda_i:=\lambda(\cdot)/b_i$ is the normalized
probability measure on $\mathcal B_i$.
} (modulo the usual relabeling on $[0,1]$ in each coordinate). We equip $(\widetilde{\mathcal W})^{k^2}$ with the weighted product metric
induced by $\delta_\square$ and the block weights $(b_i)_{i\in[k]}$.
One can readily verify that this map is an isometric embedding onto its image. Since $(\widetilde{\mathcal W},\delta_\square)$ is compact,
its finite product $(\widetilde{\mathcal W})^{k^2}$ is compact as well,
and hence complete.
As $F$ is an isometric embedding,
its image $F(\widetilde W_{\mathcal B}^{(k)})$ is complete.
Therefore it is closed in $(\widetilde{\mathcal W})^{k^2}$.
Being a closed subset of a compact space, it is compact.
\end{proof}

The last notion we need to introduce is the \emph{cell-restricted subgraph density}. This is just the restriction of the subgraph density to the portion of the graphon whose arguments (nodes, if we think to the discrete setting) belong to some specific elements of the partition $\mathcal{B}^{(k)}$. 

\begin{definition}[Cell-restricted subgraph density]\label{cell_r_sd}
Let $H$ be a finite simple graph with vertex set $V(H)=[m]$ and edge set
$\mathcal E(H)$.
Fix a vector of colors $\boldsymbol\ell=(\ell_1,\dots,\ell_m)\in[k]^m$.
For a colored graphon $g_{\mathcal B}=(g,\mathcal B^{(k)})\in
\mathcal W^{(k)}_{\mathcal B}$, define
\begin{equation}\label{def_graphon_hom_density}
t^{(\mathcal B)}_{\boldsymbol\ell}(H,g)
:=
\int_{\mathcal B_{\ell_1}\times\cdots\times\mathcal B_{\ell_m}}
\prod_{\{i,j\}\in\mathcal E(H)} g(x_i,x_j)\,
dx_1\cdots dx_m .
\end{equation}
\end{definition}
Note that $t^{(\mathcal B)}_{\boldsymbol\ell}(H,g)$ is not normalized to take values
in $[0,1]$, since no normalization by the Lebesgue measure of the cells is included. Building on the notion of cell-restricted subgraph densities, we introduce the entropy and interaction functionals that enter the large deviation rate function and the associated variational principle in Thm.~\ref{thm_LLN} below.

\begin{definition}[Entropy and interaction functionals]\label{def:I-U}
For $u\in[0,1]$ set
\[
I(u):=u\ln u + (1-u)\ln(1-u),
\]
extended continuously to $[0,1]$ by $I(0)=I(1)=0$.
For $\widetilde g^{(\mathcal{B})}\in\widetilde{\mathcal W}^{(k)}_{\mathcal B}$, let $g$ denote the first component of any representative
$g_{\mathcal B}=(g,\mathcal B^{(k)})$ of $\widetilde g^{(\mathcal{B})}$. We define:
\begin{enumerate}
\item the (block) entropy functional
\begin{equation}\label{eq:def-entropy-functional}
\mathcal I^{(\mathcal B)}(\widetilde g)
:=
\sum_{i,j\in[k]}\int_{\mathcal B_i\times\mathcal B_j} I\bigl(g(x,y)\bigr)\,dx\,dy;
\end{equation}

\item the interaction functional 
\begin{equation}\label{eq:U-functional}
U^{(\mathcal B)}_{\boldsymbol{\alpha},\mathbf h}(\widetilde g)
:=
\frac16\sum_{i,j,\ell\in[k]}
\alpha_{ij\ell}\, t^{(\mathcal B)}_{ij\ell}(H_2, g)
+
\frac12\sum_{i,j\in[k]}
h_{ij}\, t^{(\mathcal B)}_{ij}(H_1, g).
\end{equation}

\end{enumerate}
\end{definition}


\begin{remark}\label{remark_funz}
Although the functional $\mathcal I^{(\mathcal B)}$ in \eqref{eq:def-entropy-functional} does not genuinely
depend on the limiting partition, since the latter can be absorbed into the
integration domains,  we adopt this notation to emphasize that
$\mathcal I^{(\mathcal B)}$ is naturally viewed as a functional acting on the pair
$(\widetilde W,\mathcal B^{(k)})$.
\end{remark}

\subsection{Main results}
\label{sec:main_res}
Let
\(
(\mathbb{P}^{({\bf B})}_{n;\boldsymbol{\alpha},\mathbf h})_{n\ge1}
\)
be the sequence of Gibbs probability measures associated with the
Hamiltonian~\eqref{Hamilt_ERG2}.
We denote by
\(
\widetilde{\mathbb{P}}^{({\mathcal B})}_{n;\boldsymbol{\alpha},\mathbf h}
\)
their push-forward onto the quotient space
\(
(\widetilde{\mathcal W}^{(k)}_{\mathcal B},\delta^{(k)}_\square),
\)
 obtained via
the mapping
\begin{equation}\label{map_pushforward}
G \longmapsto g_{\mathcal B_n}^G
\longmapsto \tau\!\left(\mathfrak c_n(g_{\mathcal B_n}^G)\right),
\end{equation}
where \(\mathfrak c_n:\mathcal W^{(k)}_{\mathcal B_n}\to\mathcal W^{(k)}_{\mathcal B}\)
is the `color blind' map replacing the finite partition \(\mathcal B_n\) with
the limiting partition \(\mathcal B\), and \(\tau\) denotes the canonical
projection onto equivalence classes introduced above \eqref{ct2}.

\begin{theorem}[LDP and limiting free energy]\label{thm:LDP-free-energy}
The sequence $\bigl(\widetilde{\P}^{(\mathcal B)}_{n;\boldsymbol{\alpha},\mathbf h}\bigr)_{n\ge1}$
satisfies a large deviation principle on
$(\widetilde{\mathcal W}^{(k)}_{\mathcal B},\delta^{(k)}_\square)$
with speed $n^2$ and good rate function
\begin{equation}\label{eq:LDP-rate}
\mathcal I^{(\mathcal B)}_{\boldsymbol{\alpha},\mathbf h}(\widetilde f)
:=
\frac{1}{2}\mathcal I^{(\mathcal B)}(\widetilde f)
-
U^{(\mathcal B)}_{\boldsymbol{\alpha},\mathbf h}(\widetilde f)
-
\inf_{\widetilde g\in\widetilde{\mathcal W}^{(k)}_{\mathcal B}}
\Bigl\{
\frac{1}{2}\mathcal I^{(\mathcal B)}(\widetilde g)
-
U^{(\mathcal B)}_{\boldsymbol{\alpha},\mathbf h}(\widetilde g)
\Bigr\}.
\end{equation}
Moreover, the limiting free energy is given by 
\begin{equation}\label{varpr}
f^{(\mathcal{B})}_{\boldsymbol{\alpha}, {\bf{h}}}:=\lim_{n\to\infty}\frac{1}{n^2}\ln Z^{({\bf  B})}_{n;\boldsymbol{\alpha},\mathbf h}
=
\sup_{\widetilde g\in\widetilde{\mathcal W}^{(k)}_{\mathcal B}}
\Bigl\{
U^{(\mathcal B)}_{\boldsymbol{\alpha},\mathbf h}(\widetilde g)
-
\frac{1}{2}\mathcal I^{(\mathcal B)}(\widetilde g)
\Bigr\}.
\end{equation}
\end{theorem}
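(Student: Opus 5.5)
The plan is to follow the Chatterjee--Varadhan / Chatterjee--Diaconis route. First I prove an LDP for the reference measure, then transfer it to the Gibbs measure by Varadhan's lemma (tilting).

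\emph{Step 1 (reference LDP).} Let $\widetilde{\mathbb P}_n^{(\mathcal B)}$ be the push-forward, under \eqref{map_pushforward}, of the uniform measure on $\mathcal A^{(\mathbf B)}_n$. This is the \ER graph with $p=1/2$, i.e.\ the case $\boldsymbol\alpha=0$, $\mathbf h=0$ of Remark~\ref{gen_Z}. I expect an LDP with speed $n^2$ and rate $\frac12\mathcal I^{(\mathcal B)}(\widetilde f)+\frac12\ln 2$, since $\int(I+\ln 2)$ is the relative entropy with respect to Bernoulli$(1/2)$.

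For this I would invoke the colored-graphon LDP of \cite{GP} (equivalently \cite{borgs2020large} for block models). Alternatively I would redo the Chatterjee--Varadhan argument blockwise:
\begin{itemize}
\item[(a)] For the upper bound on a closed set, cover it by finitely many small $\delta^{(k)}_\square$-balls, which is possible by Lemma~\ref{compact}. For each ball, use the weak regularity lemma applied block by block, with a partition refining $\mathcal B_n^{(k)}$. Then bound the probability by an exponential Chebyshev estimate. Lower semicontinuity of $\mathcal I^{(\mathcal B)}$ in $\delta^{(k)}_\square$ follows as in the classical case, since $I$ is convex.
\item[(b)] For the lower bound, sample edges independently with probabilities given by a step approximation of $f$ adapted to the blocks. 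Then use concentration of the cut distance around the mean, together with the relative-entropy change-of-measure inequality.
\end{itemize}

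The color-blind map $\mathfrak c_n$ has to be controlled. Since $w_i^{(n)}/n\to b_i$, the colored checkerboard graphon with partition $\mathcal B_n$ differs from a suitably rescaled version with partition $\mathcal B$ by $o(1)$ in $d_\square^{(k)}$. Hence the push-forward measures are exponentially equivalent, and the LDP transfers.

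\emph{Step 2 (continuity of $U$ and matching the Hamiltonian).} The cell-restricted densities $t^{(\mathcal B)}_{\boldsymbol\ell}(H,g)$ are Lipschitz in $d_\square^{(k)}$. This follows from the standard counting lemma, applied with $g$ replaced by the pieces $g\mathds 1_{\mathcal B_i\times\mathcal B_j}$, which is exactly what the sum over $i,j$ in \eqref{cutcol} controls. They are also invariant under $\Sigma_{\mathcal B}$, so $U^{(\mathcal B)}_{\boldsymbol\alpha,\mathbf h}$ is well defined and continuous on the compact space $\widetilde{\mathcal W}^{(k)}_{\mathcal B}$.

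Next I check that $\mathcal H^{(\mathbf B)}_{n;\boldsymbol\alpha,\mathbf h}(X)=n^2\,U^{(\mathcal B)}_{\boldsymbol\alpha,\mathbf h}(\tau\mathfrak c_n g^G_{\mathcal B_n})+o(n^2)$ uniformly in $X$. Summing over all ordered color triples with weight $\frac16$ (and pairs with weight $\frac12$) reproduces the unordered counts with $u<v<w$. This uses the symmetry of $\boldsymbol\alpha,\mathbf h$. Degenerate terms with repeated vertices are $O(n)$, and the $\mathfrak c_n$ correction is $o(1)$ by continuity.

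\emph{Step 3 (tilting).} Write $Z^{(\mathbf B)}_{n}=2^{\binom n2}\,\mathbb E_{\rm unif}[e^{\mathcal H}]$. Varadhan's lemma, with $U$ bounded and continuous on a compact space, gives
\[
\lim_{n\to\infty}\frac1{n^2}\ln Z^{(\mathbf B)}_{n}=\frac{\ln2}{2}+\sup_{\widetilde g}\Bigl\{U(\widetilde g)-\tfrac12\mathcal I^{(\mathcal B)}(\widetilde g)-\tfrac{\ln 2}{2}\Bigr\},
\]
which is \eqref{varpr}. The tilted LDP, with rate \eqref{eq:LDP-rate}, follows from the standard exponential-tilting corollary of Varadhan's lemma applied to closed and open sets. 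The rate is good because it is lower semicontinuous on a compact space.

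\emph{Main obstacle.} I expect Step 1(a) to be the main obstacle, specifically getting the upper bound in the quotient by color-preserving maps $\Sigma_{\mathcal B}$ rather than all measure-preserving maps. My first attempt would be to reduce to $k^2$ block graphons via the embedding $F$ from Lemma~\ref{compact}. I would then apply Szemerédi/weak regularity inside each pair of blocks, using a common equipartition of each $B_i$. This makes the counting of regularity partitions ($e^{o(n^2)}$ of them) go through exactly as in Chatterjee--Varadhan.
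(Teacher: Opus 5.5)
Your proposal is correct and has the same skeleton as the paper's proof. Both start from a reference LDP for the uniform (Erd\H{o}s--R\'enyi, $p=\tfrac12$) measure pushed forward to $(\widetilde{\mathcal W}^{(k)}_{\mathcal B},\delta^{(k)}_\square)$, with rate $\tfrac12\mathcal I^{(\mathcal B)}+\tfrac12\ln 2$. Both then use continuity and $\Sigma_{\mathcal B}$-invariance of $U^{(\mathcal B)}_{\boldsymbol\alpha,\mathbf h}$, identify the Hamiltonian up to a uniform $o(n^2)$ error, and finish with Varadhan's lemma and the tilting theorem \cite[Thm.~II.7.2]{E}.

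The differences are in the two auxiliary inputs. For the partition mismatch, the paper does not rescale. It keeps $g^X$, swaps $\mathcal B_n$ for $\mathcal B$, and bounds the Hamiltonian error by $\gamma\eta_n$ using $\lambda(\mathcal B_i\triangle\mathcal B_i^{(n)})\le(2k-1)\eta_n$ (Lemma~\ref{lem:r1-bound}). Your rescaling also works, with one correction. The $O(\eta_n)$ closeness holds in $\delta^{(k)}_\square$: couple, inside each block, the two induced vertex distributions, which are $O(k\eta_n)$ apart in total variation. It does not hold in the labeled $d^{(k)}_\square$, where stretching a checkerboard by a fraction of a cell width can move it by a constant. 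Since the LDP lives on the quotient, the $\delta^{(k)}_\square$ bound is all you need.

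The more substantive difference is Step 1. The paper obtains Lemma~\ref{ER_LDP} in one line from \cite{CV11} via the contraction principle. You instead either cite the block-model LDPs \cite{GP,borgs2020large} or rerun Chatterjee--Varadhan with regularity partitions refining the blocks. Your route is the sounder one, for three reasons.
\begin{itemize}
\item The Chatterjee--Varadhan LDP lives on $\widetilde{\mathcal W}$, where $g\mapsto\tau(g,\mathcal B^{(k)})$ is not well defined.
\item On the labeled space the LDP holds only in the weak topology, where this map is discontinuous. For example, the checkerboard of $X_{uv}=\mathds 1\{u+v\ \text{even}\}$ converges weakly to $\tfrac12$, yet its image stays bounded away from the class of $\tfrac12$ in $\delta^{(k)}_\square$.
\item The continuity proved in the paper is for the labeled cut norm, in which no LDP with this rate holds.
\end{itemize}
So the obstacle you single out, the upper bound modulo color-preserving relabelings, is exactly what the paper's shortcut skips. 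Your Step 1(a) is what actually handles it.
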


\begin{theorem}[Euler--Lagrange equations]\label{thm:EL-alphaijk}
Let $\widetilde{g}^{\star(\mathcal{B})}\in \widetilde{\mathcal W}^{(k)}_{\mathcal B}$ be a maximizer of
the functional in \eqref{varpr}.
Then, for any representative element $g_{\mathcal B}=(g,\mathcal B^{(k)})\in \widetilde{g}^{\star(\mathcal{B})}$ the following holds.
\begin{enumerate}
\item \label{item:el:interior} There exists 
$\delta>0$ such that
\begin{equation}\label{eq:interior-alphaijk}
\delta\le g(x,y)\le 1-\delta\qquad \text{for a.e.\ }(x,y)\in[0,1]^2.
\end{equation}
\item \label{item:el:eq}
For a.e.\ $(x,y)\in[0,1]^2$,
\begin{equation}\label{eq:logistic-pointwise-alphaijk}
g(x,y)=
\frac{\exp\!\left(h_{\mathcal B}(x,y)+(\mathcal T^{(\mathcal{B})}_{\boldsymbol\alpha}g)(x,y)\right)}
{1+\exp\!\left(h_{\mathcal B}(x,y)+(\mathcal T^{(\mathcal{B})}_{\boldsymbol\alpha}g)(x,y)\right)},
\end{equation}
where $\mathcal T^{(\mathcal{B})}_{\boldsymbol\alpha}$ is the (nonlinear) triangle operator
\begin{equation}\label{eq:Aalpha-def}
\bigl(\mathcal T^{(\mathcal{B})}_{\boldsymbol\alpha}g\bigr)(x,y)
:=
\int_0^1 \Big(
\Delta_{\boldsymbol\alpha; \mathcal{B}}(x,y,z)\,g(x,z)\,g(y,z)
\Big)\,dz,
\end{equation}
with 
\begin{equation}\label{eq:aalpha-kernel}
\Delta_{\boldsymbol\alpha; \mathcal{B}}(x,y,z)
:=
\sum_{i,j,\ell=1}^k \alpha_{ij\ell}\,
\mathds 1_{\mathcal B_i}(x)\,\mathds 1_{\mathcal B_j}(y)\,\mathds 1_{\mathcal B_\ell}(z),
\end{equation}
and 
\begin{equation}\label{eq:hB}
h_{\mathcal B}(x,y):=\sum_{i,j=1}^k h_{ij}\,\mathds 1_{\mathcal B_i}(x)\,\mathds 1_{\mathcal B_j}(y).
\end{equation}
\end{enumerate}
\end{theorem}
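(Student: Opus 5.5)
We follow the classical Chatterjee--Diaconis argument for the homogeneous edge-triangle model, adapted to the block-dependent kernels $\Delta_{\boldsymbol\alpha;\mathcal B}$ and $h_{\mathcal B}$. Fix a representative $g$ of the maximizer and write $\Phi(g):=U^{(\mathcal B)}_{\boldsymbol\alpha,\mathbf h}(g)-\frac12\mathcal I^{(\mathcal B)}(g)$. The functional does not depend on the chosen representative, because $\sigma\in\Sigma_{\mathcal B}$ preserves each block and hence all cell-restricted densities. Therefore $g$ maximizes $\Phi$ over all symmetric measurable $g:[0,1]^2\to[0,1]$. The plan is to compute one-sided directional derivatives along symmetric perturbations $g+\varepsilon\phi$.

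\textbf{The first variation.} With the symmetry of $\boldsymbol\alpha$, the triangle part of $U$ can be written as
\[
\frac16\int_{[0,1]^3}\Delta_{\boldsymbol\alpha;\mathcal B}(x,y,z)\,g(x,y)g(y,z)g(x,z)\,dx\,dy\,dz .
\]
Since $\Delta_{\boldsymbol\alpha;\mathcal B}$ is invariant under permutations of its arguments, each of the three edge-positions contributes equally to the derivative. This gives
\[
\frac{d}{d\varepsilon}\Big|_{0}U(g+\varepsilon\phi)=\frac12\int_{[0,1]^2}\phi\,\bigl(\mathcal T^{(\mathcal B)}_{\boldsymbol\alpha}g+h_{\mathcal B}\bigr).
\]
The entropy part has derivative $\frac12\int\phi\,\ln\frac{g}{1-g}$ wherever $g$ stays away from $\{0,1\}$. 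Note also that $\mathcal T^{(\mathcal B)}_{\boldsymbol\alpha}g$ and $h_{\mathcal B}$ are bounded in absolute value by $M:=\max|\alpha_{ij\ell}|+\max|h_{ij}|$.

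\textbf{Step 1: interior bound (item~\ref{item:el:interior}).} This is the main obstacle, because $I'$ blows up at $0$ and $1$ and the naive derivative is not available there. Take $\delta>0$ small and let $A:=\{g<\delta\}$. Perturb in the direction $\phi=\mathbb 1_A$ (symmetrized), i.e.\ use $g_\varepsilon=g+\varepsilon\mathbb 1_A$ with $\varepsilon\in(0,\delta)$.

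The gain in $U$ is at least $-\frac12 M\varepsilon\lambda(A)-O(\varepsilon^2)$. The entropy decreases by at least $\frac12\varepsilon\lambda(A)\ln\frac{1-2\delta}{2\delta}$, using the mean value theorem and the fact that $I'(u)=\ln\frac{u}{1-u}\le\ln\frac{2\delta}{1-2\delta}$ on $[0,2\delta]$.

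For $\delta$ with $\ln\frac{1-2\delta}{2\delta}>M+1$, these combine to give $\Phi(g_\varepsilon)>\Phi(g)$ whenever $\lambda(A)>0$, contradicting maximality. Hence $g\ge\delta$ a.e. The argument for $\{g>1-\delta\}$ is symmetric, using $\phi=-\mathbb 1_{\{g>1-\delta\}}$. Care is needed only to keep the $O(\varepsilon^2)$ remainder uniform; it is controlled because $g\in[0,1]$ and all kernels are bounded.

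\textbf{Step 2: Euler--Lagrange equation (item~\ref{item:el:eq}).} Once $\delta\le g\le1-\delta$ holds, every bounded symmetric $\phi$ is an admissible two-sided perturbation for small $|\varepsilon|$. By dominated convergence $\Phi$ is differentiable in that direction, and maximality gives
\[
\int\phi\Bigl(h_{\mathcal B}+\mathcal T^{(\mathcal B)}_{\boldsymbol\alpha}g-\ln\tfrac{g}{1-g}\Bigr)=0 .
\]
The bracket is symmetric in $(x,y)$: $\Delta_{\boldsymbol\alpha;\mathcal B}$ is symmetric in $x,y$ and $h$ is a symmetric matrix. So choosing $\phi$ equal to the bracket itself (which is bounded) forces the bracket to vanish a.e. Inverting the logit then yields \eqref{eq:logistic-pointwise-alphaijk}.

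Finally, the statement holds for every representative in the class. The class is an orbit under $\Sigma_{\mathcal B}$, and the equation is covariant: $h_{\mathcal B}$ and $\Delta_{\boldsymbol\alpha;\mathcal B}$ are invariant under block-preserving $\sigma$ up to null sets, and $\mathcal T^{(\mathcal B)}_{\boldsymbol\alpha}(g^\sigma)=(\mathcal T^{(\mathcal B)}_{\boldsymbol\alpha}g)^\sigma$ by measure preservation. Alternatively, one can simply run the argument directly on each representative, since each one is itself a maximizer.
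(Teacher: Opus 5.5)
Your proof is correct and follows the same Chatterjee--Diaconis scheme as the paper. The Euler--Lagrange identity comes from two-sided variations along bounded symmetric $\varphi$, choosing $\varphi$ equal to the bracket, whose symmetry you rightly check. The interior bound comes from a one-sided perturbation on the set where $g$ is small: there the entropic slope $\ln\frac{g}{1-g}$ overwhelms the bounded interaction term $h_{\mathcal B}+\mathcal T^{(\mathcal B)}_{\boldsymbol\alpha}g$.

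The only difference is technical. You compare $\Phi(g+\varepsilon\mathds 1_{\{g<\delta\}})$ with $\Phi(g)$ via the mean value theorem. The paper instead computes the right derivative along $(p-g)_+$, with the convention that the integrand is $+\infty$ where $g=0$. Your finite-difference version has the small advantage of not needing that convention to be justified.
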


\begin{theorem}[Scalar problem] \label{thm:scalar_probl}
Suppose that $\boldsymbol{\alpha}\geq 0$. Define the set of admissible symmetric matrices
\begin{equation}\label{def:c_sym}
\mathsf{C}_{\mathrm{sym}}
:=\Bigl\{C=(c_{ij})_{i,j\in[k]}\in[0,1]^{k\times k}:\ c_{ij}=c_{ji}\ \ \forall i,j\in[k]\Bigr\},
\end{equation}
and for $C\in\mathsf{C}_{\mathrm{sym}}$ let $g_C$ be the corresponding $\mathcal B$--block graphon
\begin{equation}\label{eq:gC-def}
g_C(x,y):=\sum_{i,j=1}^k c_{ij}\,\mathds 1_{\mathcal B_i}(x)\mathds 1_{\mathcal B_j}(y),\qquad (x,y)\in[0,1]^2.
\end{equation}
Then:
\begin{enumerate}
\item \label{scal:item1} Every maximizer $\widetilde{g}^{\star(\mathcal{B})}\in \widetilde{\mathcal W}^{(k)}_{\mathcal B}$ of the variational problem in
\eqref{varpr} admits a representative of the form $(g_{C}, \mathcal{B}^{(k)})$. In particular,
the variational problem reduces to a finite-dimensional one:
\begin{equation}\label{eq:finite-dim-problem}
\sup_{C\in\mathsf{C}_{\mathrm{sym}}}
\left\{
\frac{1}{6}\sum_{i,j,\ell=1}^k \alpha_{ij\ell}\,t^{(\mathcal B)}_{ij\ell}(H_2,g_C)
+\frac12\sum_{i,j=1}^k h_{ij}\, t^{(\mathcal B)}_{ij}(H_1,g_C)
-\frac12\,\mathcal I^{(\mathcal B)}(g_C)
\right\}.
\end{equation}
\item \label{scal:item2} There exists at least one maximizer
$
C^\star \equiv C^\star(\boldsymbol\alpha,\mathbf h)\in\mathsf{C}_{\mathrm{sym}}
$
of \eqref{eq:finite-dim-problem}.
\item \label{scal:item3} Any maximizer $C^\star=(c^\star_{ij})_{i,j\in[k]}$
satisfies the Euler--Lagrange
fixed-point system: for all $i,j\in[k]$,
\begin{equation}\label{eq:fixed-point-scalar}
c^{\star}_{ij}
=
\frac{\exp\!\left(
h_{ij}+\sum_{\ell=1}^k b_\ell\,c^{\star}_{i\ell}c^{\star}_{j\ell}\alpha_{ij\ell}
\right)}
{1+\exp\!\left(
h_{ij}+\sum_{\ell=1}^k b_\ell\,c^{\star}_{i\ell}c^{\star}_{j\ell}\,
\alpha_{ij\ell})
\right)}
\end{equation}
(see Fig.~\ref{lim_graphon} for an illustration of $(g_{C^{\star}},\mathcal{B}^{k})$).
\end{enumerate}
\end{theorem}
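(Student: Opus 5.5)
The plan is to adapt the Chatterjee--Varadhan / Chatterjee--Diaconis replica-symmetry argument block by block, using Hölder's inequality to bound triangle densities by edge $L^3$-norms. Write the functional in \eqref{varpr} as $\Phi(g)=U^{(\mathcal B)}_{\boldsymbol\alpha,\mathbf h}(g)-\frac12\mathcal I^{(\mathcal B)}(g)$.

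\textbf{Step 1: reduction to block graphons.} Fix any $g\in\mathcal W$ and, for each pair of blocks, set
\[
c_{ij}:=\Bigl(\tfrac{1}{b_ib_j}\int_{\mathcal B_i\times\mathcal B_j} g^3\Bigr)^{1/3}\in[0,1].
\]
By symmetry of $g$, $c_{ij}=c_{ji}$. Apply the generalized Hölder inequality for the triangle (the Finner inequality, with exponent $3$ on each of the three edge factors) to the cell-restricted triangle density:
\[
t^{(\mathcal B)}_{ij\ell}(H_2,g)\le b_ib_jb_\ell\,c_{ij}c_{j\ell}c_{i\ell}=t^{(\mathcal B)}_{ij\ell}(H_2,g_C).
\]
Since $\boldsymbol\alpha\ge0$, the triangle part of $U$ can only increase when $g$ is replaced by $g_C$.

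The edge and entropy parts do not separate cleanly with this choice of $c_{ij}$. Following Chatterjee--Diaconis, I would handle them by introducing $\psi(u):=\frac12h_{ij}u-\frac12I(u)$ on each cell and writing $u=v^{1/3}$. The function $v\mapsto \frac12h_{ij}v^{1/3}-\frac12I(v^{1/3})$ is not concave in general, so Jensen cannot be applied directly. Instead I would argue pointwise:
\begin{align*}
\Phi(g)&\le\sum_{i,j}\int_{\mathcal B_i\times\mathcal B_j}\Bigl[\tfrac{1}{6}\textstyle\sum_\ell\alpha_{ij\ell}b_\ell c_{i\ell}c_{j\ell}\,g(x,y)\cdot(\ldots)\Bigr]\\
&\quad\;+\sum_{i,j}\int_{\mathcal B_i\times\mathcal B_j}\bigl[\tfrac12h_{ij}g-\tfrac12I(g)\bigr].
\end{align*}
The cleaner route is the Chatterjee--Diaconis one. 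Bound $t_{ij\ell}\le b_ib_jb_\ell\, c_{ij}c_{j\ell}c_{i\ell}$ and use $c_{j\ell}c_{i\ell}\le\frac12(c_{j\ell}^2+c_{i\ell}^2)$, or more precisely AM--GM in the form $xyz\le\frac13(x^3+y^3+z^3)$. This gives
\[
\textstyle\sum\alpha_{ij\ell}\,t_{ij\ell}\le\sum_{i,j}\Bigl(\sum_\ell\alpha_{ij\ell}b_\ell\Bigr)\int_{\mathcal B_i\times\mathcal B_j} g^3 .
\]
The functional is now bounded by $\sum_{i,j}\int_{\mathcal B_i\times\mathcal B_j}\phi_{ij}(g(x,y))\,dx\,dy$ with
\[
\phi_{ij}(u)=\tfrac16\beta_{ij}u^3+\tfrac12h_{ij}u-\tfrac12I(u),\qquad \beta_{ij}:=\sum_\ell\alpha_{ij\ell}b_\ell .
\]
Each integral is at most $b_ib_j\sup_u\phi_{ij}(u)$. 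Moreover, the constant block graphon with $c_{ij}=\arg\max\phi_{ij}$ attains equality only if all AM--GM steps are tight, which in general they are not. So this crude bound gives the supremum only in special cases.

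I would therefore prefer an argument that preserves the coupling between blocks. Given a maximizer $g$, compare it with its cellwise $L^3$-averaged block graphon $g_C$. Hölder handles the triangles, but for the edge and entropy terms I need
\[
\int_{\mathcal B_i\times\mathcal B_j}\bigl[\tfrac12h_{ij}g-\tfrac12I(g)\bigr]\le b_ib_j\bigl[\tfrac12h_{ij}c_{ij}-\tfrac12I(c_{ij})\bigr].
\]
This fails in general. The fix is a two-step symmetrization. First, use the Euler--Lagrange equation of Thm.~\ref{thm:EL-alphaijk} to show that within each cell $g$ is a logistic function of $\mathcal T g$. Then show that $\mathcal T g$ is constant on cells by a rearrangement or uniqueness argument. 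Concretely, I would take the maximizer $g$ and define $g'$ by replacing $g$ on each cell with the constant maximizing the cellwise one-dimensional problem with coefficient $\sum_\ell\alpha_{ij\ell}b_\ell c_{i\ell}c_{j\ell}$. The key inequality, derived from Hölder plus $\boldsymbol\alpha\ge0$, is a ``linearized'' bound: $\Phi(g)\le\Phi(g')$. It comes from concavity of $-I$ combined with the fact that, at a maximizer, the triangle term's first variation dominates the term itself.

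\textbf{Main obstacle.} I expect exactly this step — showing that an optimal $g$ can be flattened within cells without loss — to be the hardest. My first attempt will be the $L^3$-Hölder bound combined with a per-cell analysis of $u\mapsto \frac16\beta u^3+\frac12hu-\frac12I(u)$, mimicking Chatterjee--Diaconis Thm.~4.1.

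\textbf{Steps 2 and 3.} Existence (item 2) follows because $\mathsf C_{\mathrm{sym}}$ is compact and the objective in \eqref{eq:finite-dim-problem} is continuous in $C$: the densities are polynomials in the $c_{ij}$ and $I$ is continuous on $[0,1]$. For item 3, evaluate $t_{ij\ell}(H_2,g_C)=b_ib_jb_\ell\,c_{ij}c_{j\ell}c_{i\ell}$ and $t_{ij}(H_1,g_C)=b_ib_jc_{ij}$. By Thm.~\ref{thm:EL-alphaijk}(1), the maximizer is interior, so it is a critical point. Differentiating in $c_{ij}$ for $i\ne j$ (which appears twice, through $c_{ij}=c_{ji}$) and collecting the symmetric $\alpha$ terms gives $I'(c_{ij})=h_{ij}+\sum_\ell b_\ell\alpha_{ij\ell}c_{i\ell}c_{j\ell}$. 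Since $I'(u)=\ln\frac{u}{1-u}$, this is exactly \eqref{eq:fixed-point-scalar}. Alternatively, item 3 is just \eqref{eq:logistic-pointwise-alphaijk} evaluated on $g_{C^\star}$, since there $(\mathcal T g_C)(x,y)=\sum_\ell b_\ell\alpha_{ij\ell}c_{i\ell}c_{j\ell}$ for $x\in\mathcal B_i$, $y\in\mathcal B_j$.
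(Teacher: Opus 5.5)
\textbf{Items 2 and 3.} These are fine and agree with the paper: compactness of $\mathsf C_{\mathrm{sym}}$ plus continuity, and then \eqref{eq:logistic-pointwise-alphaijk} evaluated on $g_{C^\star}$. Your direct differentiation of the finite-dimensional objective also gives \eqref{eq:fixed-point-scalar}, diagonal entries included.

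\textbf{The gap: Item 1.} Item 1 is the substance of the theorem, and you do not prove it. Your cellwise $L^3$ step is the paper's Hölder inequality \eqref{ineq:Holder}. You correctly see that it controls the triangles but not $\frac12 h_{ij}\int g-\frac12\int I(g)$, and that the AM--GM relaxation to $\sum_{ij}\beta_{ij}\int_{\mathcal B_i\times\mathcal B_j}g^3$ is not tight for $k\ge2$. The proposed repair, however, is only asserted. The inequality $\Phi(g)\le\Phi(g')$ does not follow from concavity of $-I$ plus a first-variation remark, for two reasons:
\begin{enumerate}
\item The triangle functional is not convex. Its second variation, proportional to $\int\Delta_{\boldsymbol\alpha;\mathcal B}(x,y,z)\,\varphi(x,y)\varphi(y,z)g(x,z)$, has no sign, so there is no tangent inequality to combine with Jensen.
\item The triangle term of $g'$ involves its new constants, not the $c_{i\ell}$ you used to define it.
\end{enumerate}
Likewise, showing that $\mathcal T^{(\mathcal B)}_{\boldsymbol\alpha}g$ is cell-constant is, through the Euler--Lagrange equation, equivalent to the claim itself. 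Also, invoking Thm.~\ref{thm:EL-alphaijk}(1) for a maximizer $C^\star$ of \eqref{eq:finite-dim-problem} presupposes that $g_{C^\star}$ maximizes the full problem, which is Item 1 again. Interiority can instead be read off directly from $I'(0^+)=-\infty$ and $I'(1^-)=+\infty$.

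\textbf{Comparison with the paper, and a route that works.} The paper's argument does not supply the missing step either. It bounds $\Phi$ by the functional $\Phi^+$ in which each $t^{(\mathcal B)}_{ij\ell}(H_2,g)$ is replaced by its Hölder bound, and uses the equality case of Hölder to show $\Phi=\Phi^+$ exactly on block-constant graphons. To conclude, one still needs $\sup\Phi^+$ to be attained at a block-constant graphon, which is your obstacle in another form. That is immediate for $k=1$, where $\Phi^+=\int\phi(g)$ for a scalar $\phi$, but not when the bounds are geometric means over different cells.

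Under $\boldsymbol\alpha\ge0$, supermodularity gives the finite-dimensional reduction:
\begin{enumerate}
\item Since $abc$ is supermodular on $[0,1]^3$, $\Delta_{\boldsymbol\alpha;\mathcal B}\ge0$, and the edge and entropy terms act pointwise, one has $\Phi(g\vee g')+\Phi(g\wedge g')\ge\Phi(g)+\Phi(g')$.
\item Hence maximizers are closed under $\vee$, $\wedge$, monotone a.e.\ limits (by dominated convergence), and $g\mapsto g^\sigma$ for block-preserving measure-preserving bijections $\sigma$.
\item The essential supremum (respectively infimum) of the orbit of a maximizer $g$ is a countable supremum, hence again a maximizer. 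It is invariant under all such $\sigma$, hence a.e.\ constant on each cell.
\item This produces block-constant maximizers $g_{\underline C}\le g\le g_{\overline C}$.
\end{enumerate}
Consequently the reduction to \eqref{eq:finite-dim-problem} holds, and Item 3 applies to every finite-dimensional maximizer. Every maximizer is block-constant whenever $C^\star$ is unique, for example when $\|\boldsymbol\alpha\|_\infty<2$. The unconditional ``every maximizer'' claim still needs an extra strictness argument, for instance from the equality case of the supermodular inequality together with $g\ge\delta>0$.
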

\begin{figure}[h!]
		\centering
		\includegraphics[scale=0.85]{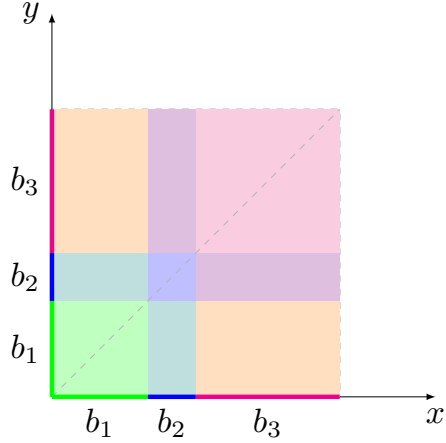}
		\caption{ Illustration of a maximizer $(g_{C^{\star}},\mathcal{B}^{k})$ of \eqref{varpr}, where $C^{\star}$ maximizes the scalar problem \eqref{eq:finite-dim-problem} and satisfies the system in \eqref{eq:fixed-point-scalar}.
		}
		\label{lim_graphon}
	\end{figure}
\begin{theorem}[Contraction map]\label{prop:small-alpha-tensor}
Let $k\in\N$,  $\mathbf h=(h_{ij})_{i,j\in[k]}\in\R^{k\times k}$, and 
$\boldsymbol\alpha=(\alpha_{ij\ell})_{i,j,\ell\in[k]}\in\R^{k\times k\times k}$.
Define the logistic function $\sigma:\R\to(0,1)$ by
\[
\sigma(x):=\frac{e^x}{1+e^x}.
\]
We consider the map
\[
S^{(\mathcal B)}_{\boldsymbol\alpha;\mathbf h}:[0,1]^{k\times k}\to(0,1)^{k\times k}
\]
defined componentwise, for $C=(c_{ij})_{i,j\in[k]}\in\mathsf{C}_{\mathrm{sym}}$, by
\begin{equation}\label{eq:Smap-def}
\bigl(S^{(\mathcal B)}_{\boldsymbol\alpha;\mathbf h}(C)\bigr)_{ij}
:=
\sigma\!\left(
h_{ij}
+\sum_{\ell=1}^k b_\ell\,c_{i\ell}c_{j\ell}\,
\alpha_{ij\ell}
\right),
\qquad i,j\in[k].
\end{equation}
Then $S^{(\mathcal B)}_{\boldsymbol\alpha;\mathbf h}$ is globally Lipschitz on
$[0,1]^{k\times k}$ with respect to $\|\cdot\|_\infty$, with Lipschitz constant
\begin{equation}\label{eq:Lipschitz-constant-tensor}
L:=\frac12\|\boldsymbol\alpha\|_\infty,
\end{equation}
where $\|\boldsymbol\alpha\|_\infty
:=
\max_{i,j,\ell\in[k]}|\alpha_{ij\ell}|$.
If $L<1$ (i.e., if $\|\boldsymbol\alpha\|_\infty<2$), then
$S^{(\mathcal B)}_{\boldsymbol\alpha;\mathbf h}$ is a contraction on
$([0,1]^{k\times k},\|\cdot\|_\infty)$ and therefore admits a unique fixed point
$C^\star\equiv C^\star(\boldsymbol\alpha,\mathbf h)$.
\end{theorem}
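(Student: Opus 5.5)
The plan is to combine the mean value theorem for the logistic function $\sigma$ with an elementary bilinear estimate on the products $c_{i\ell}c_{j\ell}$, and then to invoke the Banach fixed point theorem. For $C=(c_{ij}),C'=(c'_{ij})\in[0,1]^{k\times k}$ and fixed $i,j\in[k]$, write
\[
\theta_{ij}(C):=h_{ij}+\sum_{\ell=1}^k b_\ell\,\alpha_{ij\ell}\,c_{i\ell}c_{j\ell}.
\]
Then $\bigl(S^{(\mathcal B)}_{\boldsymbol\alpha;\mathbf h}(C)\bigr)_{ij}=\sigma(\theta_{ij}(C))$. Since $\sigma'=\sigma(1-\sigma)\le \tfrac14$ on $\R$, the mean value theorem gives
\[
\bigl|\sigma(\theta_{ij}(C))-\sigma(\theta_{ij}(C'))\bigr|\le \tfrac14\,\bigl|\theta_{ij}(C)-\theta_{ij}(C')\bigr| .
\]
The field $h_{ij}$ cancels in the difference, so no assumption on $\mathbf h$ is needed.

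Next I will bound the difference of the arguments. Adding and subtracting $c'_{i\ell}c_{j\ell}$ gives
\[
c_{i\ell}c_{j\ell}-c'_{i\ell}c'_{j\ell}=(c_{i\ell}-c'_{i\ell})\,c_{j\ell}+c'_{i\ell}\,(c_{j\ell}-c'_{j\ell}).
\]
Because all entries lie in $[0,1]$, this is bounded in absolute value by $2\|C-C'\|_\infty$. Using $|\alpha_{ij\ell}|\le\|\boldsymbol\alpha\|_\infty$ and $\sum_\ell b_\ell=1$ from Assumption~\ref{ass:partition}, we get
\[
|\theta_{ij}(C)-\theta_{ij}(C')|\le 2\|\boldsymbol\alpha\|_\infty\|C-C'\|_\infty .
\]
Combining this with the first estimate and taking the maximum over $(i,j)$ yields
\[
\|S^{(\mathcal B)}_{\boldsymbol\alpha;\mathbf h}(C)-S^{(\mathcal B)}_{\boldsymbol\alpha;\mathbf h}(C')\|_\infty\le \tfrac12\|\boldsymbol\alpha\|_\infty\|C-C'\|_\infty,
\]
which is the claimed constant $L$.

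For the contraction statement, observe that $([0,1]^{k\times k},\|\cdot\|_\infty)$ is a closed subset of a finite-dimensional normed space, hence complete, and that $S^{(\mathcal B)}_{\boldsymbol\alpha;\mathbf h}$ maps it into $(0,1)^{k\times k}\subset[0,1]^{k\times k}$. If $L<1$, the Banach fixed point theorem gives existence and uniqueness of a fixed point $C^\star$. I would also check that $C^\star$ is symmetric, so that it lies in $\mathsf C_{\mathrm{sym}}$ and is a genuine candidate for \eqref{eq:fixed-point-scalar}. By the symmetry of $\mathbf h$ and of $\boldsymbol\alpha$ under index permutations, $\theta_{ij}(C)=\theta_{ji}(C)$ whenever $C$ is symmetric, so the map preserves the closed set $\mathsf C_{\mathrm{sym}}$. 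One can then apply Banach on $\mathsf C_{\mathrm{sym}}$ directly. Alternatively, since the transpose $(C^\star)^{\top}$ is also a fixed point, uniqueness forces $C^\star=(C^\star)^{\top}$.

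There is no real obstacle in this argument. The only points requiring care are obtaining the factor $2$ (rather than something depending on $k$) in the product estimate, which uses that the weights $b_\ell$ sum to one, and the bookkeeping needed to ensure the fixed point is symmetric.
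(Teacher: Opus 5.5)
Your proposal is correct and follows the paper's proof essentially step for step: the mean value theorem with $\sigma'\le\frac14$, the add-and-subtract bound $|c_{i\ell}c_{j\ell}-c'_{i\ell}c'_{j\ell}|\le 2\|C-C'\|_\infty$ combined with $\sum_\ell b_\ell=1$, and Banach's theorem on the complete set $[0,1]^{k\times k}$. Your symmetry check is a small addition the paper omits, and it is simpler than you make it: since $h_{ij}=h_{ji}$ and $\alpha_{ij\ell}=\alpha_{ji\ell}$, one has $\theta_{ij}(C)=\theta_{ji}(C)$ for \emph{every} $C\in[0,1]^{k\times k}$, not only symmetric ones, so $S^{(\mathcal B)}_{\boldsymbol\alpha;\mathbf h}$ takes values in $\mathsf C_{\mathrm{sym}}$ and the fixed point $C^\star=S^{(\mathcal B)}_{\boldsymbol\alpha;\mathbf h}(C^\star)$ is automatically symmetric (your transpose argument needs exactly this fact, because $S^{(\mathcal B)}_{\boldsymbol\alpha;\mathbf h}(C^{\top})\neq S^{(\mathcal B)}_{\boldsymbol\alpha;\mathbf h}(C)^{\top}$ in general).
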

Before moving to the proofs, some remarks are in order. 
\begin{remark}
When $\boldsymbol{\alpha}\equiv\alpha\in\mathbb{R}$,
the condition $\|\boldsymbol{\alpha}\|_\infty<2$
reduces to the scalar constraint $|\alpha|<2$, which coincides with the range of validity
of \cite[Thm.~6.2]{CD} for the standard edge-triangle model.
The region $|\alpha|<2$ is sometimes referred to as Dobrushin's uniqueness region. 
\end{remark}

\begin{remark}\label{rem:logic}
The role of Thms.~\ref{thm:EL-alphaijk}--\ref{thm:scalar_probl} can be summarized as follows.
The Euler--Lagrange equations characterize maximizers of the variational problem
through a pointwise nonlinear equation and imply that any maximizer takes values
strictly between $0$ and $1$.
Under the additional assumption $\boldsymbol\alpha\ge 0$, the problem
admits a maximizer that is $\mathcal B$--block constant, so that the infinite-dimensional
variational problem reduces to a finite-dimensional one in terms of matrices
$C=(c_{ij})_{i,j\in[k]}$.\\
The fixed-point map $S^{(\mathcal B)}_{\boldsymbol\alpha;\mathbf h}$ is introduced in Thm.~\ref{prop:small-alpha-tensor}
to analyze the corresponding Euler--Lagrange system.
For sufficiently small $\boldsymbol\alpha$, this map is a contraction on the
complete space $[0,1]^{k\times k}$ and therefore admits at most one fixed point.
\end{remark}
Combining these results, one can readily deduce 
the following corollary.

\begin{corollary}[Uniqueness of the maximizer]\label{cor:unique-maximiser}
Assume that $0\leq \|\boldsymbol\alpha\|_\infty<2$.
Then the variational problem \eqref{varpr} admits a \emph{unique} maximizer
$\widetilde g^{\star(\mathcal B)}\in\widetilde{\mathcal W}^{(k)}_{\mathcal B}$.
Moreover, any representative of $\widetilde g^{\star(\mathcal B)}$ has the form $g_{C^\star}$ as in \eqref{eq:gC-def}, where
$C^\star\equiv C^\star(\boldsymbol\alpha,\mathbf h)\in \mathsf{C}_{\mathrm{sym}}$ is the unique solution of the finite-dimensional Euler--Lagrange
system \eqref{eq:fixed-point-scalar}.
\end{corollary}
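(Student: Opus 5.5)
The corollary follows by chaining the three preceding theorems. The assumption $0\le\|\boldsymbol\alpha\|_\infty<2$ is to be read as $\boldsymbol\alpha\ge 0$ entrywise together with $\|\boldsymbol\alpha\|_\infty<2$. In this regime both Thm.~\ref{thm:scalar_probl} and Thm.~\ref{prop:small-alpha-tensor} apply.

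\emph{Existence.} The functional $U^{(\mathcal B)}_{\boldsymbol\alpha,\mathbf h}-\frac12\mathcal I^{(\mathcal B)}$ has the following properties. The cell-restricted densities are cut-continuous, and the entropy is lower semicontinuous, which is already implicit in the goodness of the rate function in Thm.~\ref{thm:LDP-free-energy}. The space $\widetilde{\mathcal W}^{(k)}_{\mathcal B}$ is compact by Lemma~\ref{compact}. Hence the supremum in \eqref{varpr} is attained. Alternatively, existence follows from item~\ref{scal:item2} of Thm.~\ref{thm:scalar_probl} together with item~\ref{scal:item1}, which shows the scalar and functional suprema coincide.

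\emph{Uniqueness.} Let $\widetilde g^{\star}_1,\widetilde g^{\star}_2$ be two maximizers.
\begin{enumerate}
\item By item~\ref{scal:item1} of Thm.~\ref{thm:scalar_probl}, each admits a representative $(g_{C_m},\mathcal B^{(k)})$ with $C_m\in\mathsf C_{\mathrm{sym}}$.
\item Since $g_{C_m}$ represents a maximizer of \eqref{varpr} and the two suprema agree, $C_m$ maximizes \eqref{eq:finite-dim-problem}. By item~\ref{scal:item3}, $C_m$ therefore solves \eqref{eq:fixed-point-scalar}, i.e. $C_m$ is a fixed point of $S^{(\mathcal B)}_{\boldsymbol\alpha;\mathbf h}$.
\item By Thm.~\ref{prop:small-alpha-tensor}, $S^{(\mathcal B)}_{\boldsymbol\alpha;\mathbf h}$ is a contraction on the complete space $[0,1]^{k\times k}$. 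One should check that it maps $\mathsf C_{\mathrm{sym}}$ into itself, which holds by the symmetry of $h$ and $\boldsymbol\alpha$. Hence it has a unique fixed point, so $C_1=C_2=C^\star$, and the two classes coincide.
\end{enumerate}
For the claim about \emph{any} representative: if $(g,\mathcal B^{(k)})\sim(g_{C^\star},\mathcal B^{(k)})$, then $g=g_{C^\star}^\sigma$ for some $\sigma\in\Sigma_{\mathcal B}$. Since $\sigma$ preserves each block up to null sets and $g_{C^\star}$ is block-constant, $g=g_{C^\star}$ a.e.

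\emph{Main obstacle.} The step needing most care is step~2: concluding that the matrix $C_m$ extracted from an arbitrary functional maximizer actually maximizes the finite-dimensional problem, rather than merely being a critical point. I would argue this directly. Block graphons form a subset of $\widetilde{\mathcal W}^{(k)}_{\mathcal B}$, so the scalar supremum is at most the functional supremum. That supremum is attained by $g_{C_m}$, so $C_m$ attains the scalar supremum. Alternatively, one can bypass this entirely by applying Thm.~\ref{thm:EL-alphaijk} to the block representative. With $g=g_{C_m}$, a direct computation gives
\[
\bigl(\mathcal T^{(\mathcal B)}_{\boldsymbol\alpha}g\bigr)(x,y)=\sum_\ell b_\ell\alpha_{ij\ell}c_{i\ell}c_{j\ell}\qquad\text{on }\mathcal B_i\times\mathcal B_j,
\]
so \eqref{eq:logistic-pointwise-alphaijk} reduces exactly to \eqref{eq:fixed-point-scalar}.
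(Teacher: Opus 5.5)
Your proposal is correct and takes essentially the same approach as the paper. The paper omits the proof, calling the corollary a straightforward consequence of Thm.~\ref{thm:scalar_probl} (every maximizer is block-constant and satisfies the fixed-point system) combined with the contraction result Thm.~\ref{prop:small-alpha-tensor}, and that is exactly the chain you run; you also spell out what the paper leaves implicit, namely existence of a maximizer via compactness and upper semicontinuity, why the extracted $C_m$ really maximizes the scalar problem, and why block-constant graphons are left unchanged a.e.\ by every $\sigma\in\Sigma_{\mathcal B}$.
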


Having established a uniqueness result, it is natural to turn to the question of whether a law of large numbers holds for subgraph densities. We present our results for the edge density; however, we claim that the same argument applies to any subgraph density.
In the results below, we keep the dependence on \(n\) inside the partition explicit, in order to avoid any confusion with the measure in Eq.~\eqref{LLN} (arising from Rem.~\ref{rem:kolmogorov-block}).
Let \(E_n\) denote the random number of edges of a ERG sampled according to the probability measure \(\mathbb{P}^{(\mathbf{B}_n)}_{n;\boldsymbol{\alpha},\mathbf{h}}\).
The following holds.

\begin{theorem}[Exponential convergence for $E_n$]\label{thm_ExpC}
Suppose $0\le \|\boldsymbol\alpha\|_\infty<2$. Then,
\begin{equation}\label{exp_conv}
\frac{2E_n}{n^2}\xrightarrow{\ \mathrm{exp}\ }
\sum_{i,j=1}^k b_i b_j\,c^\star_{ij}, \quad \text{ w.r.t. } \mathbb{P}^{({\bf{B}_n})}_{n;\boldsymbol{\a},\bf{h}}, \text{ as } n \to +\infty,
\end{equation}	
where, for each $i,j\in[k]$, $c^{\star}_{i,j}$ solves \eqref{eq:fixed-point-scalar}. 
\end{theorem}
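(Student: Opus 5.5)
The proof rests on the large deviation principle of Thm.~\ref{thm:LDP-free-energy} together with the uniqueness of the maximizer from Cor.~\ref{cor:unique-maximiser}. The key observation is that $2E_n/n^2$ is, up to an $O(1/n)$ correction from the diagonal, the total edge density of the colored checkerboard graphon. Writing it as a sum over cells,
\[
\frac{2E_n}{n^2}=\sum_{i,j\in[k]} t^{(\mathcal B_n)}_{ij}(H_1,g^{G}) + O(1/n),
\]
we see that it is a function of the push-forward image $\tau(\mathfrak c_n(g^G_{\mathcal B_n}))$. The passage from $\mathcal B_n$ to $\mathcal B$ costs at most $\sum_i |w_i^{(n)}/n-b_i|\to0$, uniformly in $G$.

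Define $T(\widetilde g):=\int_{[0,1]^2} g\,dx\,dy$ on $\widetilde{\mathcal W}^{(k)}_{\mathcal B}$. It is well defined on equivalence classes, since measure-preserving maps preserve integrals. It is also continuous, indeed $1$-Lipschitz, with respect to $\delta^{(k)}_\square$: take $C=D=[0,1]$ in \eqref{cutcol} and use $\bigl|\sum_{i,j}a_{ij}\bigr|\le\sum_{i,j}|a_{ij}|$. Hence for any $\varepsilon>0$ the set
\[
F_\varepsilon:=\Bigl\{\widetilde g:\ \Bigl|T(\widetilde g)-\sum_{i,j} b_ib_j c^\star_{ij}\Bigr|\ge\varepsilon\Bigr\}
\]
is closed in the compact space of Lemma~\ref{compact}. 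For $n$ large, the event $\{|2E_n/n^2-\sum b_ib_jc^\star_{ij}|\ge 2\varepsilon\}$ is contained in $\{\widetilde{\mathbb P}\text{-image}\in F_\varepsilon\}$.

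Apply the LDP upper bound:
\[
\limsup_n \frac1{n^2}\ln \mathbb P^{(\mathbf B_n)}_{n;\boldsymbol\alpha,\mathbf h}\Bigl(\Bigl|\tfrac{2E_n}{n^2}-\textstyle\sum b_ib_jc^\star_{ij}\Bigr|\ge2\varepsilon\Bigr)\le -\inf_{F_\varepsilon}\mathcal I^{(\mathcal B)}_{\boldsymbol\alpha,\mathbf h}.
\]
It remains to show this infimum is strictly positive. The rate function is good and $F_\varepsilon$ is compact, so the infimum is attained at some $\widetilde f$ (lower semicontinuity on a compact set). If the value were $0$, then $\widetilde f$ would be a maximizer of \eqref{varpr}. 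By Cor.~\ref{cor:unique-maximiser}, $\widetilde f$ would equal the class of $g_{C^\star}$, for which $T(g_{C^\star})=\sum_{i,j}b_ib_jc^\star_{ij}$, contradicting $\widetilde f\in F_\varepsilon$. This gives exponential decay of the deviation probabilities, which is exactly exponential convergence. Borel--Cantelli then yields the almost sure version if desired.

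The main obstacle is the uniqueness input. Cor.~\ref{cor:unique-maximiser} combines Thm.~\ref{thm:scalar_probl}, which needs $\boldsymbol\alpha\ge0$ and is what the hypothesis $0\le\|\boldsymbol\alpha\|_\infty$ is meant to encode, with the contraction of Thm.~\ref{prop:small-alpha-tensor}. I would invoke it as stated. A secondary technical point is the uniform comparison between $\mathcal B_n$ and $\mathcal B$ under the color-blind map $\mathfrak c_n$. Here I would check that $T$ composed with $\mathfrak c_n$ differs from the raw edge density by $o(1)$ uniformly. This holds because $\mathfrak c_n$ only relabels the partition and leaves the graphon $g^G$ itself unchanged, so the integral $T$ is exactly the edge density up to the diagonal term.
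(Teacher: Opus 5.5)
Your argument is correct, but it does not follow the paper's route. The paper never invokes the LDP upper bound. It works with the scaled cumulant generating function $c_n(s)=\frac{2}{n^2}\ln\mathbb E^{(\mathbf B)}_{n;\boldsymbol\alpha,\mathbf h}[e^{sE_n}]$ and notes that tilting by $sE_n$ amounts to shifting every $h_{ij}$ by $s$. By the free-energy part of Thm.~\ref{thm:LDP-free-energy} this gives $c_n(s)\to 2\bigl(f^{(\mathcal B)}_{\boldsymbol\alpha,\mathbf h+s\mathbf 1}-f^{(\mathcal B)}_{\boldsymbol\alpha,\mathbf h}\bigr)$. Danskin's theorem on the finite-dimensional problem, whose maximizer $C^\star$ is unique, then gives differentiability at $s=0$ with derivative $\sum_{i,j}b_ib_jc^\star_{ij}$, and the paper concludes by Ellis's criterion \cite[II.6.3]{E}. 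You instead use the LDP directly. $T(\widetilde g)=\int g$ is $1$-Lipschitz for $\delta^{(k)}_\square$, so $F_\varepsilon$ is compact, and the good rate function has a strictly positive minimum on $F_\varepsilon$ because its unique zero is the class of $(g_{C^\star},\mathcal B^{(k)})$. Both proofs rest on the same uniqueness input, Cor.~\ref{cor:unique-maximiser}. You are also right that the hypothesis $0\le\|\boldsymbol\alpha\|_\infty$ is vacuous as written; what Thm.~\ref{thm:scalar_probl} actually needs is entrywise $\boldsymbol\alpha\ge 0$.

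What each route buys: the paper's argument needs only the limiting free energy as a function of $\mathbf h$, which Subsec.~\ref{alternative_der_fe} derives without the LDP, plus the classical identity ``mean density $=$ derivative of the free energy''. However, it only works for statistics that can be absorbed as a tilt of parameters already present in the Hamiltonian. Your argument needs the full LDP upper bound, but it gives more. Run with $F_\varepsilon=\{\widetilde g:\ \delta^{(k)}_\square(\widetilde g,\widetilde g_{C^\star})\ge\varepsilon\}$, it yields concentration at speed $n^2$ of the whole colored empirical graphon around the class $\widetilde g_{C^\star}$ of $g_{C^\star}$. Hence every continuous functional converges exponentially, in particular every cell-restricted density $t^{(\mathcal B)}_{\boldsymbol\ell}(H,\cdot)$ by Prop.~\ref{prop:continuity-cell-restricted}. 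That is exactly the extension the paper only asserts.

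A minor simplification: the checkerboard graphon vanishes on diagonal cells and $T$ does not depend on the partition. So $T$ of the pushed-forward class equals $2E_n/n^2$ exactly, and neither the $O(1/n)$ correction nor the $2\varepsilon$ slack is needed.
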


\begin{remark}[Kolmogorov extension]\label{rem:kolmogorov-block}
For $n\in\N$, recall that
$
\mathcal A_n=\{0,1\}^{\binom{n}{2}}$
is the set of adjacency matrices of simple graphs on $[n]=\{1,\dots,n\}$.
For $m>n$, denote by
\[
\pi_{m\to n}:\mathcal A_m\to\mathcal A_n
\]
the canonical projection obtained by restricting a configuration to the
induced subgraph on $[n]$.
Assume the projective consistency condition
\begin{equation}\label{eq:projective-consistency}
\mathbb P^{(\mathbf B_{n+1})}_{n+1;\boldsymbol\alpha,\mathbf h}
\circ \pi_{n+1\to n}^{-1}
=
\mathbb P^{(\mathbf B_n)}_{n;\boldsymbol\alpha,\mathbf h}
\qquad\text{for all }n\in\N,
\end{equation}
that is, under
$\mathbb P^{(\mathbf B_{n+1})}_{n+1;\boldsymbol\alpha,\mathbf h}$
the induced subgraph on $[n]$ has law
$\mathbb P^{(\mathbf B_n)}_{n;\boldsymbol\alpha,\mathbf h}$.
Then, by the Kolmogorov extension theorem, there exists a unique probability measure
$
\mathbb P^{(\mathbf B)}_{\boldsymbol\alpha,\mathbf h}
$
on the infinite product space
$\bigl(\{0,1\}^{\binom{\N}{2}},\mathscr F\bigr)$
whose marginals on $\mathcal A_n$ coincide with
$\mathbb P^{(\mathbf B_n)}_{n;\boldsymbol\alpha,\mathbf h}$ for all $n$.
Here $\mathscr F$ denotes the Borel $\sigma$-algebra and
$\mathbf B=(B_i)_{i\in[k]}$ is a deterministic partition of $\N$
such that $B_n^{(i)}=B_i\cap[n]$ for each $i\in[k]$ (where $B_n^{(i)}$ are the elements of the partition given in Assumption~\ref{ass:partition}).
\end{remark}

\begin{remark}
In view of Rem.~\ref{rem:kolmogorov-block}
and Assumption~\ref{ass:partition},
the partition $\mathbf B$ of $\mathbb N$
is such that for all $i\in[k]$,
\[
\frac{|B_i\cap[n]|}{n} \to b_i
\quad \text{as } n\to\infty,
\]
where $b_i=\lambda(\mathcal B_i)$.
\end{remark}

Finally, Thm.~\ref{thm_LLN} together with Rem.~\ref{rem:kolmogorov-block} leads to the following.

\begin{theorem}[SLLN for $E_n$]\label{thm_LLN}
Suppose $0\le \|\boldsymbol\alpha\|_\infty<2$ and that the finite-size partition ${\bf{B}}_n$ satisfies \eqref{eq:projective-consistency}. Then,
\begin{equation}\label{LLN}
\frac{2E_n}{n^2}\xrightarrow{\ \mathrm{a.s.}\ }
\sum_{i,j=1}^k b_i b_j\,c^\star_{ij}, \quad \text{ w.r.t. } \mathbb{P}^{({\bf{B}})}_{\boldsymbol{\a},\bf{h}}, \text{ as } n \to +\infty,
\end{equation}	
where, for each $i,j\in[k]$, $c^{\star}_{i,j}$ solves \eqref{eq:fixed-point-scalar}, and $\mathbb{P}^{({\bf{B}})}_{\boldsymbol{\a},\bf{h}}$ is the infinite-volume Gibbs measure
constructed in Rem.~\ref{rem:kolmogorov-block}. 
\end{theorem}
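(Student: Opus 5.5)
The plan is to upgrade the exponential convergence of Thm.~\ref{thm_ExpC} to almost sure convergence through a Borel--Cantelli argument, carried out on the infinite-volume space given by Rem.~\ref{rem:kolmogorov-block}. Set $e^\star:=\sum_{i,j}b_ib_jc^\star_{ij}$ and $Y_n:=2E_n/n^2$, where $E_n$ now means the number of edges of the induced subgraph on $[n]$ of the infinite random graph. By the projective consistency \eqref{eq:projective-consistency}, the law of $Y_n$ under $\mathbb P^{(\mathbf B)}_{\boldsymbol\alpha,\mathbf h}$ is exactly its law under $\mathbb P^{(\mathbf B_n)}_{n;\boldsymbol\alpha,\mathbf h}$. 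Hence for every $\varepsilon>0$,
\[
\mathbb P^{(\mathbf B)}_{\boldsymbol\alpha,\mathbf h}\bigl(|Y_n-e^\star|>\varepsilon\bigr)
=\mathbb P^{(\mathbf B_n)}_{n;\boldsymbol\alpha,\mathbf h}\bigl(|Y_n-e^\star|>\varepsilon\bigr).
\]

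Next I will make explicit what ``$\xrightarrow{\exp}$'' in Thm.~\ref{thm_ExpC} gives. For each $\varepsilon>0$ there are $c(\varepsilon)>0$ and $n_0$ such that the right-hand side above is at most $e^{-c(\varepsilon)n}$ for $n\ge n_0$. This follows from the LDP of Thm.~\ref{thm:LDP-free-energy} together with Cor.~\ref{cor:unique-maximiser}. The map $\widetilde g\mapsto \sum_{i,j}\int_{\mathcal B_i\times\mathcal B_j} g$ is continuous in $\delta^{(k)}_\square$, so $F_\varepsilon:=\{\widetilde g:\ |t(H_1,g)-e^\star|\ge\varepsilon\}$ is closed. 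The good rate function vanishes only at $\widetilde g^{\star(\mathcal B)}=\widetilde{(g_{C^\star},\mathcal B)}$, which is not in $F_\varepsilon$. By compactness (Lemma~\ref{compact}) and lower semicontinuity, $\inf_{F_\varepsilon}\mathcal I^{(\mathcal B)}_{\boldsymbol\alpha,\mathbf h}=:2c>0$, and the LDP upper bound then gives decay $e^{-cn^2}$, which is even stronger than needed.

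One small point must be checked: the edge density of the checkerboard graphon computed with the limiting partition $\mathcal B$ (after the colour-blind map $\mathfrak c_n$) differs from $Y_n$ only by $O(1/n)$ (the diagonal terms) plus a term of order $\sum_i|w_i^{(n)}/n-b_i|$. In fact the total edge density does not depend on the colouring at all, so only the diagonal correction matters, and it vanishes deterministically.

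Finally, $\sum_n e^{-cn^2}<\infty$, so by Borel--Cantelli, almost surely $|Y_n-e^\star|\le\varepsilon$ for all $n$ large. Taking $\varepsilon=1/m$ and intersecting over $m\in\mathbb N$ gives $Y_n\to e^\star$ almost surely. The only delicate step is the identification of marginals through Kolmogorov consistency, which is assumed in \eqref{eq:projective-consistency}. Everything else is a direct consequence of the summable exponential bound, so I expect no real obstacle beyond stating the exponential convergence quantitatively.
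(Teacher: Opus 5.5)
Your argument is correct. Its outer layer is the paper's: the paper proves Theorem~\ref{thm_LLN} in one line, combining \eqref{exp_conv} with the projective consistency of Remark~\ref{rem:kolmogorov-block} and the first Borel--Cantelli lemma, exactly as in your first and last paragraphs.

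The difference lies in where the summable bound comes from. The paper takes it from Theorem~\ref{thm_ExpC}, which is proved through the scaled cumulant generating function $c^{(\mathbf B)}_n(s)=2\bigl(f^{(\mathbf B)}_{n;\boldsymbol\alpha,\mathbf h+s\mathbf 1}-f^{(\mathbf B)}_{n;\boldsymbol\alpha,\mathbf h}\bigr)$. That proof uses the scalar reduction of Theorem~\ref{thm:scalar_probl} for the tilted fields and Danskin's theorem to show the limit is differentiable at $s=0$ with derivative $\sum_{i,j}b_ib_jc^\star_{ij}$. It then invokes Ellis' criterion \cite[II.6.3]{E}.

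You instead work directly with the LDP of Theorem~\ref{thm:LDP-free-energy}. The set $F_\varepsilon$ is closed in a compact space. The good rate function attains a strictly positive minimum on it, because its only zero is $\widetilde g^{\star(\mathcal B)}$ (Corollary~\ref{cor:unique-maximiser}). The LDP upper bound then gives $e^{-cn^2}$.

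Your route has three advantages:
\begin{enumerate}
\item It dispenses with differentiability of the free energy in the edge field and with Danskin's theorem.
\item It makes the rate $\inf_{F_\varepsilon}\mathcal I^{(\mathcal B)}_{\boldsymbol\alpha,\mathbf h}$ explicit.
\item It gives exponential concentration of the whole colored graphon around $\widetilde g^{\star(\mathcal B)}$ in $\delta^{(k)}_\square$. So it applies verbatim to every cell-restricted subgraph density, which the paper only claims in passing before Theorem~\ref{thm_ExpC}.
\end{enumerate}
The paper's route, in exchange, identifies the limit as a derivative of the free energy.

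One small correction: there is no diagonal term at all. Since $g^X$ vanishes on the diagonal cells, $\int_{[0,1]^2}g^X=2E_n/n^2$ exactly.
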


\section{Proofs} \label{sec_proofs}
\subsection{LDP}
\begin{proof}[Proof of Thm.~\ref{thm:LDP-free-energy} (LDP)]
The idea of the proof is to represent
\(\mathbb{P}^{({\bf B}_n)}_{n;\boldsymbol{\alpha},\mathbf h}\)
as a tilted measure with respect to the Erd\H{o}s--R\'enyi reference measure.
To this end, we first derive an alternative representation of the Hamiltonian
\begin{align*}
\mathcal{H}^{(\mathbf B_n)}_{n;\boldsymbol\alpha,\mathbf h}(X)
&= \frac{1}{n}\sum_{i,j,\ell\in[k]} \alpha_{ij\ell}
\sum_{\substack{u\in B_i,\ v\in B_j,\ w\in B_\ell \\ u<v<w}}
X_{uv}\,X_{vw}\,X_{uw}\\
&\hspace{4cm}+\sum_{i,j \in [k]} h_{ij}\sum_{\substack{u\in B_i,\ v\in B_j\\ u<v}} X_{uv}\notag
\end{align*}
in terms of cell-restricted subgraph densities
(see Def.~\ref{cell_r_sd}).
Now, the partition ${\bf{B}}_n\equiv \mathbf B^{(k)}_{n}$,
in turn induces a partition
\(\mathcal B^{(k)}_n=(\mathcal B^{(n)}_1,\dots,\mathcal B^{(n)}_k)\) of \([0,1]\)
into consecutive intervals of lengths
\(|\mathcal B^{(n)}_i|=|B^{(n)}_i|/n\), for all \(i=1,\dots,k\).
Let \(H_1\) and \(H_2\) denote, respectively, the edge and
triangle subgraphs.
It is easy to check that
\begin{align}
\frac{2\sum_{\substack{u\in B_i,\ v\in B_j\\ u<v}} X_{uv}}{n^2}&= t^{({\mathcal{B}}_{n})}_{ij}(H_1, g^{X}) \\
\frac{6\sum_{\substack{u\in B_i,\ v\in B_j,\ w\in B_\ell \\ u<v<w}}
X_{uv}\,X_{vw}\,X_{uw}}{n^3}&= t^{({\mathcal{B}}_{n})}_{ij\ell}(H_2, g^{X}) \label{tr_subd},
\end{align}
where we use the checkerboard representation introduced in
Def.~\ref{def:checkg}, identifying the adjacency matrix $X$
with the corresponding graph.
For notational simplicity, we write $i,j,\ell$ in place of
$\ell_1,\ell_2,\ell_3$.
Consequently, we may rewrite (dropping the superscript $k$)
\begin{equation}\label{ident:HU}
\mathcal{H}^{(\mathbf B_n)}_{n;\boldsymbol\alpha,\mathbf h}(X)
=n^2U^{(\mathcal{B}_n)}_{\boldsymbol\alpha,\mathbf h}(X)
\end{equation}
where 
\begin{equation}\label{def:U}
U^{(\mathcal{B}_n)}_{\boldsymbol\alpha,\mathbf h}(X) =\frac{1}{6} \sum_{i,j,\ell\in [k]}\alpha_{ij\ell}\; t^{(\mathcal{B}_n)}_{ij\ell}(H_2, g^{X}) + \frac{1}{2}\sum_{i,j \in [k]} h_{ij} \;t^{(\mathcal{B}_n)}_{ij}(H_1, g^{X}).
\end{equation}

\begin{remark}
Note that
\[
t^{(\mathcal B_{n})}_{ij}(H_1,g^X)
=\frac{|B^{(n)}_i||B^{(n)}_j|}{n^2}\,
  \frac{2\sum_{u\in B^{(n)}_i,\; v\in B^{(n)}_j,\;u<v}X_{uv}}
       {|B^{(n)}_i||B^{(n)}_j|},
\quad i,j\in[k].
\]
Thus, the first factor is the Lebesgue measure of the block
$\mathcal B^{(n)}_i\times\mathcal B^{(n)}_j\subset[0,1]^2$, while the second
factor is the edge homomorphism density\footnote{See, e.g., \cite[Eq.~(2.9)]{CD} for the definition.} of the discrete subgraph
induced by $B^{(n)}_i$ and $B^{(n)}_j$. The same holds for the term $t^{({\mathcal{B}}_{n})}_{ij\ell}(H_2, g^{X})$ in \eqref{tr_subd}.
\end{remark}

This representation shows that the Hamiltonian extends naturally to
$\mathcal W^{(k)}_{\mathcal B_n}$.
We now replace the finite partition $\mathcal B_n$ with the limiting
partition $\mathcal B$ introduced in
Assumption~\ref{ass:partition}.
To quantify the discrepancy, define the error term
\[
R_{n;\boldsymbol\alpha,\mathbf h}(X)
:=
U^{(\mathcal B_n)}_{\boldsymbol\alpha,\mathbf h}(X)
-
U^{(\mathcal B)}_{\boldsymbol\alpha,\mathbf h}(X),
\]
where $U^{(\mathcal P)}_{\boldsymbol\alpha,\mathbf h}$ is given in
\eqref{def:U} for $\mathcal P\in\{\mathcal B,\mathcal B_n\}$.
In view of \eqref{ident:HU}, the Hamiltonian can then be decomposed as
\begin{align}
\mathcal{H}^{(\mathbf B_n)}_{n;\boldsymbol\alpha,\mathbf h}(X)
&=
n^2\Bigl(
U^{(\mathcal B)}_{\boldsymbol\alpha,\mathbf h}(X)
+
R_{n;\boldsymbol\alpha,\mathbf h}(X)
\Bigr).
\label{eq:U_graphon}
\end{align}

We will show that $R_{n;\boldsymbol\alpha,\mathbf h}(X)=o(1)$,
so that its contribution to the Hamiltonian is of order $o(n^2)$.
This is established in the following lemma, whose proof is deferred to
the end of the paragraph.

\begin{lemma}\label{lem:r1-bound}
Recall Assumption~\ref{ass:partition} with the condition imposed in \eqref{eq:block-proportions}. 
Define
\begin{equation}\label{def:eta}
\eta_n({\bf{B}}) := \max_{1\le i\le k} \left| \frac{w_i}{n} - b_i\right|.
\end{equation}
For each graph $X\in \mathcal{A}^{(\bf{B})}_{n}$, set
\begin{align}
r^{(1)}_{ij;n}(X)
&:= t^{(\mathcal B)}_{ij}(H_1,g^X)
   - t^{(\mathcal B_{n})}_{ij}(H_1,g^X)
\quad i,j\in[k] \label{eq:r1}\\
r^{(2)}_{ij\ell;n}(X)
&:= t^{(\mathcal B)}_{ij\ell}(H_2,g^X)
   - t^{(\mathcal B_{n})}_{ij}(H_1,g^X)
\quad i,j,\ell\in[k].\label{eq:r2}
\end{align}
Then there exists constants $\gamma^{(1)}_k := 4k-2$ and $\gamma_k^{(2)}:=6k-3$  such that
\begin{align}
\bigl|r^{(1)}_{ij,n}(X)\bigr|
&\le \gamma^{(1)}_k\,\eta_n({\bf{B}})
\qquad \label{ineq:r1}\\
\bigl|\,r^{(2)}_{ijq,n}(X)\,\bigr|
&\le\; \gamma_k^{(2)}\,\eta_n({\bf{B}}) \label{ineq:r2}
\end{align}
for all $X\in\mathcal{A}^{(\bf{B})}_{n}$,  $i,j,\ell\in[k]$ and $n\ge1$.
\end{lemma}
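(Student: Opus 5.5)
The plan is to bound both remainders by a purely measure-theoretic comparison of the two partitions $\mathcal B^{(k)}$ and $\mathcal B^{(k)}_n$ of $[0,1]$. It uses only $0\le g^X\le 1$ and does not depend on the particular graph $X$. I read \eqref{eq:r2} as the intended difference $t^{(\mathcal B)}_{ij\ell}(H_2,g^X)-t^{(\mathcal B_n)}_{ij\ell}(H_2,g^X)$; the displayed $H_1$ term is presumably a typo.

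\emph{Step 1: comparing the intervals.} Both partitions consist of consecutive intervals. Write $\mathcal B_i=[\beta_{i-1},\beta_i)$ with $\beta_i=\sum_{m\le i}b_m$, and $\mathcal B^{(n)}_i=[\beta^{(n)}_{i-1},\beta^{(n)}_i)$ with $\beta^{(n)}_i=\sum_{m\le i}w_m/n$. By the triangle inequality,
\[
|\beta_i-\beta^{(n)}_i|\le \sum_{m\le i}\Bigl|\tfrac{w_m}{n}-b_m\Bigr|\le i\,\eta_n(\mathbf B).
\]
The symmetric difference of two intervals is controlled by the displacement of their endpoints. Since $\beta_0=\beta^{(n)}_0=0$, this gives
\[
\lambda\bigl(\mathcal B_i\triangle\mathcal B^{(n)}_i\bigr)\le |\beta_{i-1}-\beta^{(n)}_{i-1}|+|\beta_i-\beta^{(n)}_i|\le (2i-1)\eta_n(\mathbf B)\le (2k-1)\eta_n(\mathbf B).
\]
One could improve this using $\beta_k=\beta^{(n)}_k=1$, but it is not needed.

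\emph{Step 2: the edge term.} For measurable $A,A',C,C'\subseteq[0,1]$ we have the pointwise inequality
\[
\bigl|\mathds 1_{A\times A'}(x,y)-\mathds 1_{C\times C'}(x,y)\bigr|\le \mathds 1_{A\triangle C}(x)+\mathds 1_{A'\triangle C'}(y).
\]
To see it, telescope through the set $C\times A'$. Since $0\le g^X\le1$, integrating this inequality gives
\[
|r^{(1)}_{ij;n}(X)|\le\lambda(\mathcal B_i\triangle\mathcal B^{(n)}_i)+\lambda(\mathcal B_j\triangle\mathcal B^{(n)}_j)\le 2(2k-1)\eta_n(\mathbf B)=(4k-2)\eta_n(\mathbf B).
\]

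\emph{Step 3: the triangle term.} Here I use the same argument with three coordinates. Telescope $\mathds 1_{\mathcal B_i\times\mathcal B_j\times\mathcal B_\ell}-\mathds 1_{\mathcal B^{(n)}_i\times\mathcal B^{(n)}_j\times\mathcal B^{(n)}_\ell}$ by swapping one factor at a time. Each swap is bounded by the indicator of a symmetric difference in a single coordinate. The integrand $g^X(x,y)g^X(y,z)g^X(x,z)$ lies in $[0,1]$, and the remaining two coordinates integrate to at most $1$. Hence
\[
|r^{(2)}_{ij\ell;n}(X)|\le 3(2k-1)\eta_n(\mathbf B)=(6k-3)\eta_n(\mathbf B).
\]

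The only step requiring care is Step 1, namely the bookkeeping showing that the cumulative endpoint drift of block $i$ is at most $(2i-1)\eta_n$. It is elementary, but it is where the $k$-dependence of $\gamma^{(1)}_k$ and $\gamma^{(2)}_k$ enters. Uniformity in $X$ and $n$ is automatic, since only the bound $g^X\in[0,1]$ was used.
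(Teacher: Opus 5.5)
Your proof is correct and follows the paper's argument essentially step for step. Your pointwise indicator inequality is the functional form of the paper's Claim~\ref{claim:geom} (used once for edges and twice for triangles), and your endpoint-drift estimate $\lambda(\mathcal B_i\triangle\mathcal B_i^{(n)})\le(2i-1)\eta_n(\mathbf B)\le(2k-1)\eta_n(\mathbf B)$ is the paper's bound $2|a_i-c_i|+|b_i-w_i/n|$, giving the same constants $4k-2$ and $6k-3$; your reading of \eqref{eq:r2} as the difference of $H_2$ densities is also the one the paper's proof actually uses.
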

\begin{remark}[Uniform bound of the Hamiltonian error]
As a consequence of \eqref{ineq:r1}--\eqref{ineq:r2} and \eqref{eq:block-proportions}, we obtain the bound:
\begin{equation}\label{ineq:boundR}
\sup_{X\in\mathcal{A}^{({\bf{B}})}_n}
\bigl|R_{n;\boldsymbol{\alpha},\bf {h}}(X)\bigr|
\;\le\;
\gamma\,\eta_n({\bf{B}})=o(1),
\end{equation}
where $\gamma\equiv \gamma(\alpha_{\infty},h_{\infty},k)$ depends on  $\alpha_{\infty}:=||\boldsymbol{\alpha}||_{\infty}$, $h_{\infty}:=||{\bf{h}}||_{\infty}$ and $k$.
\end{remark}
As a second step, observe that
$
t^{(\mathcal B)}_{\boldsymbol\ell}(H,g)
=
t^{(\mathcal B)}_{\boldsymbol\ell}(H,\tilde g).
$
Consequently, the Hamiltonian admits a well-defined extension to
$\widetilde{\mathcal W}_{\mathcal B}^{(k)}$.
Recalling \eqref{pf} and \eqref{ident:HU}, we may therefore express the
partition function as follows:
\begin{align}
Z^{({\bf{B}}_n)}_{n;\boldsymbol{\alpha},\bf{h}}&=
\sum_{X\in \mathcal{A}^{(\bf{B})}_{n}}\exp{[n^2U^{(\mathcal{B}_n)}_{\boldsymbol{\alpha}, \bf{h}}(X)]}\\
&=\sum_{X\in \mathcal{A}^{(\bf{B})}_{n}}\exp{[n^2U^{(\mathcal{B})}_{\boldsymbol{\alpha}, \bf{h}}(X)]}\exp{[n^2R_{n;\boldsymbol{\alpha}, \bf{h}}(X)]}.\label{eq:part_fct_sum1}
\end{align}
We now seek a convenient way to decompose the sum above.
First, we recall the map \eqref{map_pushforward}.
We use the shorthand notation
$
g^{X}_{\mathcal B}:=\mathfrak c_n\bigl(g_{\mathcal B_n}^X\bigr)
$
for the checkerboard graphon associated with $X$,
equipped not with its natural partition $\mathcal B_n$, but with the limiting
partition $\mathcal B$.
We then rewrite the sum in \eqref{eq:part_fct_sum1} as follows:

\begin{align}
Z^{(\mathbf B_n)}_{n;\boldsymbol{\alpha},\mathbf h}
&=\sum_{\tilde g\in \widetilde{\mathcal W}^{(k)}_{\mathcal B}}\sum_{X\in \mathcal{A}^{(\bf{B})}_{n}: g^{X}_{\mathcal{B}}\in \tilde{g}}
\exp\Bigl[n^2 U^{(\mathcal B)}_{\boldsymbol{\alpha},\mathbf h}(\tilde g)\Bigr]\exp\left[n^2R_{n;\boldsymbol{\alpha}, \bf{h}}(X)\right]\\
&=
\sum_{\tilde g\in \widetilde{\mathcal W}^{(k)}_{\mathcal B}}
\exp\Bigl[n^2 U^{(\mathcal B)}_{\boldsymbol{\alpha},\mathbf h}(\tilde g)\Bigr]\sum_{X\in \mathcal{A}^{(\bf{B})}_{n}: g^{X}_{\mathcal{B}}\in \tilde{g}}\exp\left[n^2R_{n;\boldsymbol{\alpha}, \bf{h}}(X)\right] \label{eq:PF_R}
\end{align}
(the superscript $\mathcal B$ is omitted whenever this causes no ambiguity).\\
We emphasize that the inner sum in \eqref{eq:PF_R} runs over the set
$
[\,\widetilde{g}^{(\mathcal B)}\,]_n
:=
\bigl\{
X\in \mathcal{A}^{({\bf{B}}_n)}_{n}:\;
g^{X}_{\mathcal B}\in \widetilde{g}^{(\mathcal B)}
\bigr\},
$
that is, the collection of colored graphs whose checkerboard graphon,
equipped with the limiting partition $\mathcal B$,
belongs to the equivalence class
$\widetilde g^{(\mathcal B)}\in
\widetilde{\mathcal W}_{\mathcal B}^{(k)}$.
An analogous decomposition applies to the numerator of $\P^{({\bf{B}}_n)}_{n;\boldsymbol{\alpha}, \bf{h}}(X)$. Combining these observations, the Gibbs measure can be written as
\begin{equation}\label{eq:measP}
\P^{({\bf{B}}_n)}_{n;\boldsymbol{\alpha},\bf{h}}(X)=
\frac{\exp\Bigl[n^2 U^{(\mathcal B)}_{\boldsymbol{\alpha},\mathbf h}(\tilde g)\Bigr]\exp\left[n^2R_{n;\boldsymbol{\alpha}, \mathbf h}(X)\right]\mathds{1}(g^{X}_{\mathcal{B}}\in \tilde{g})}{\sum_{\tilde g\in \widetilde{\mathcal W}^{(k)}_{\mathcal B}}
\exp\Bigl[n^2 U^{(\mathcal B)}_{\boldsymbol{\alpha},\mathbf h}(\tilde g)\Bigr]\sum_{X\in \mathcal{A}^{(\bf{B})}_{n}}\exp\left[n^2R_{n;\boldsymbol{\alpha}, \mathbf h}(X)\right]\mathds{1}(g^{X}_{\mathcal{B}}\in \tilde{g})},
\end{equation}
where $X\in\mathcal{A}^{({\bf B} )}_n$.  
We now bound the measure in \eqref{eq:measP} by using \eqref{ineq:boundR}, therefore we get
\begin{equation}\label{eq:final_meas}
\begin{aligned}
&\exp{[-2\gamma\eta_{n}({\bf{B}}) n^2]}\frac{\exp\Bigl[n^2 U^{(\mathcal B)}_{\boldsymbol{\alpha},\mathbf h}(\tilde g)\Bigr]\mathds{1}(g^{X}_{\mathcal{B}}\in \tilde{g})}{\sum_{\tilde g\in \widetilde{\mathcal W}^{(k)}_{\mathcal B}}
\exp\Bigl[n^2 U^{(\mathcal B)}_{\boldsymbol{\alpha},\mathbf h}(\tilde g)\Bigr]\sum_{X\in \mathcal{A}^{(\bf{B})}_{n}}\mathds{1}(g^{X}_{\mathcal{B}}\in \tilde{g})} \\&\leq \P^{({\bf{B}}_n)}_{n;\boldsymbol{\alpha},\bf{h}}(X)\leq \exp{[2\gamma\eta_{n}({\bf{B}}) n^2]}\frac{\exp\Bigl[n^2 U^{(\mathcal B)}_{\boldsymbol{\alpha},\mathbf h}(\tilde g)\Bigr]\mathds{1}(g^{X}_{\mathcal{B}}\in \tilde{g})}{\sum_{\tilde g\in \widetilde{\mathcal W}^{(k)}_{\mathcal B}}
\exp\Bigl[n^2 U^{(\mathcal B)}_{\boldsymbol{\alpha},\mathbf h}(\tilde g)\Bigr]\sum_{X\in \mathcal{A}^{(\bf{B})}_{n}}\mathds{1}(g^{X}_{\mathcal{B}}\in \tilde{g})}. 
\end{aligned}
\end{equation}
We next seek a convenient expression for the inner sum $\sum_{X\in \mathcal{A}^{(\bf{B})}_{n}}\mathds{1}(g^{X}_{\mathcal{B}}\in \tilde{g}^{(\mathcal{B})})= |[\,\widetilde{g}^{(\mathcal B)}\,]_n|$, see the definition below \eqref{eq:PF_R}. To this end, let
$\mathbb P^{\mathrm{ER}}_{n;1/2}$
denote the standard \ER measure on
$\mathcal A_n$ with parameter $p=\frac{1}{2}$, which is 
uniform on $\mathcal A_n$.
It therefore induces a probability measure
$\widetilde{\mathbb P}^{\mathrm{ER}}_{n;1/2}$
on $\widetilde{\mathcal W}_{\mathcal B}^{(k)}$
through the map $X\mapsto g^X_{\mathcal B}$, namely
\begin{equation}\label{displ:change_meas}
\widetilde{\mathbb P}^{\mathrm{ER}}_{n;1/2}
\bigl(\widetilde g^{(\mathcal B)}\bigr)
=
\frac{\bigl|
\{X\in\mathcal A_n:
g^X_{\mathcal B}\in
\widetilde g^{(\mathcal B)}\}
\bigr|}
{2^{\binom n2}}.
\end{equation}
Thanks to the action of the color-blind map $g^{X}_{\mathcal{B}}$,
the same expression holds with $\mathcal A_n$ replaced by
$\mathcal A_n^{(\mathbf B)}$.
Consequently,
\begin{equation}\label{eq:innersum}
\sum_{X\in \mathcal A_n^{(\mathbf B)}}
\mathds 1\bigl(g^X_{\mathcal B}\in
\widetilde g^{(\mathcal B)}\bigr)
=
\bigl|[\,\widetilde g^{(\mathcal B)}\,]_n\bigr|
=
2^{\binom n2}
\widetilde{\mathbb P}^{\mathrm{ER}}_{n;1/2}
\bigl(\widetilde g^{(\mathcal B)}\bigr).
\end{equation}

We are now in a position to recast the Gibbs measures appearing on both sides of \eqref{eq:final_meas}
into a form that is most convenient for our analysis.
Summing the pointwise bounds in \eqref{eq:final_meas}
over all configurations
$X\in[\widetilde g^{(\mathcal B)}]_n$
amounts to taking the push-forward of
$\mathbb P^{(\mathbf B_n)}_{n;\boldsymbol\alpha,\mathbf h}$
under the map \eqref{map_pushforward}:
\[
\widetilde{\mathbb P}^{(\mathcal B)}_{n;\boldsymbol\alpha,\mathbf h}
(\widetilde g)
:=\mathbb P^{(\mathbf B_n)}_{{n;\boldsymbol\alpha,\mathbf h}}
\bigl(\{X\in \mathcal{A}^{({\bf{B}}_n)}_{n}: g^{X}_{\mathcal B}\in \widetilde g\}\bigr)=
\sum_{X\in[\widetilde g^{(\mathcal B)}]_n}
\mathbb P^{(\mathbf B_n)}_{n;\boldsymbol\alpha,\mathbf h}(X).
\]
Consequently, for every Borel set
$\widetilde A\subseteq
\widetilde{\mathcal W}^{(k)}_{\mathcal B}$ and each $n\geq 1$,
we obtain the bounds
$$
\exp{[-2\gamma\eta_{n}({\bf{B}}) n^2]}\widetilde{\mathbb{Q}}^{(\mathcal{B})}_{n;\boldsymbol{\alpha},\mathbf h}(\widetilde{A})\leq \widetilde{\mathbb P}^{(\mathcal B)}_{n;\boldsymbol\alpha,\mathbf h}(\widetilde{A})\leq \exp{[2\gamma\eta_{n}({\bf{B}}) n^2]}\widetilde{\mathbb{Q}}^{(\mathcal{B})}_{n;\boldsymbol{\alpha},\mathbf h}(\widetilde{A}), 
$$
where the reference measures
\(\widetilde{\mathbb Q}^{(\mathcal B)}_{n;\boldsymbol\alpha,\mathbf h}\) on
\(\widetilde{\mathcal W}^{(k)}_{\mathcal B}\) are defined (using \eqref{eq:innersum}) as

\begin{equation}\label{tilted_measures}
\widetilde{\mathbb{Q}}^{(\mathcal{B})}_{n;\boldsymbol{\alpha},\mathbf h}(\widetilde{A}) := \frac{\sum_{\widetilde{g} \in \widetilde{A}} \exp \left(n^2U^{(\mathcal{B})}_{\boldsymbol{\alpha}, \mathbf h}(\widetilde{g})\right) \widetilde{\mathbb P}^{\mathrm{ER}}_{n;1/2}(\widetilde{g})}{\sum_{\widetilde{g} \in \widetilde{\mathcal{W}}_{\mathcal{B}}^{(k)}} \exp\left(n^2U^{(\mathcal{B})}_{\boldsymbol{\alpha}, \mathbf h}(\widetilde{g})\right) \widetilde{\mathbb P}^{\mathrm{ER}}_{n;1/2}(\widetilde{g})}.
\end{equation}
We now establish a large deviation principle for the sequence
$\bigl(\widetilde{\mathbb Q}^{(\mathcal B)}_{n;\boldsymbol\alpha,\mathbf h}\bigr)_{n\ge1}$
on
$\bigl(\widetilde{\mathcal W}^{(k)}_{\mathcal B},
\delta^{(k)}_\square\bigr)$
with speed $n^2$.
By exponential equivalence, this immediately yields the same LDP for
$\bigl(\widetilde{\mathbb P}^{(\mathcal B)}_{n;\boldsymbol\alpha,\mathbf h}\bigr)_{n\ge1}$.
To this end, we apply \cite[Thm.~II.7.2(a)]{E}.
The assumptions of the theorem are satisfied:
the functional
$U^{(\mathcal B)}_{\boldsymbol\alpha,\mathbf h}$
is bounded and continuous on the compact metric space
$\bigl(\widetilde{\mathcal W}^{(k)}_{\mathcal B},
\delta^{(k)}_\square\bigr)$
(see Prop.~\ref{prop:continuity-cell-restricted}
and Lem.~\ref{compact}),
and the reference measures
$\bigl(\widetilde{\mathbb P}^{\mathrm{ER}}_{n;1/2}\bigr)_{n\ge1}$
satisfy an LDP on this space with good rate function
$\mathcal I^{(\mathcal B)}_{1/2}$
(see Lem.~\ref{ER_LDP}).
We can then conclude that
the sequence $\{\widetilde{\mathbb{Q}}_{n;\boldsymbol{\alpha},\bf{h}}\}_{n \geq 1}$ satisfies a large deviation principle with speed $n^2$ and rate function
\begin{equation}\label{tilted_rate_function}
\mathcal{I}^{(\mathcal{B})}_{\boldsymbol{\alpha},\bf{h}}(\widetilde{g}) := \mathcal{I}^{(\mathcal{B})}_{\frac{1}{2}}(\widetilde{g}) - U^{(\mathcal{B})}_{\boldsymbol{\alpha}, \bf{h}}(\widetilde{g}) - \inf_{\widetilde{g} \in \widetilde{\mathcal{W} }^{(k)}_{\mathcal{B}}} \left\{ \mathcal{I}^{(\mathcal{B})}_{\frac{1}{2}}(\widetilde{g}) - U^{(\mathcal{B})}_{\boldsymbol{\alpha}, \bf{h}}(\widetilde{g}) \right\},
\end{equation}
where $\mathcal{I}^{(\mathcal{B})}_{\frac{1}{2}}$ is obtained by setting $p=\frac{1}{2}$ in \eqref{ER_rate_function}.
Note that since $\mathcal{I}^{(\mathcal{B})}_{1/2}$ is lower semicontinuous (it is indeed a rate function, see Lem.~\ref{ER_LDP}), the function $\mathcal{I}^{(\mathcal{B})}_{\boldsymbol{\alpha},\bf{h}}$ is too (as a sum of lower semicontinuous functions).
Finally, we point out that \eqref{tilted_rate_function} coincides with
\eqref{eq:LDP-rate}, since the two expressions differ only by a
\(\tfrac{1}{2}\ln 2\) term, which cancels out. To conclude the proof, we need to show that
\begin{equation}\label{eq:limfe}
\lim_{n\to\infty}\frac{1}{n^2}\ln Z^{(\bf{B})}_{n;\boldsymbol{\alpha},\bf{h}} = f^{(\mathcal{B})}_{\boldsymbol{\alpha},\bf{h}},
\end{equation}
where we recall that
\begin{equation}
\begin{aligned}\label{prob_var}
f^{(\mathcal{B})}_{\boldsymbol{\alpha},\bf{h}}&=\\
&\sup_{\widetilde{g}\in\widetilde{\mathcal{W} }^{(k)}_{\mathcal{B}}}\Big\{\frac{1}{6}\sum_{i,j,\ell\in[k]}
\alpha_{ij\ell}\, t^{(\mathcal B)}_{ij\ell}(H_2,\widetilde g)
+
\frac12\sum_{i,j\in[k]}
h_{ij}\, t^{(\mathcal B)}_{ij}(H_1,\widetilde g)-\frac{1}{2}\mathcal{I}^{(\mathcal{B})}(\widetilde{g})\Big\}
\end{aligned}
\end{equation}
is the limiting free energy. The limit in \eqref{eq:limfe} follows as a direct application of Varadhan’s theorem
\cite[Thm.~3.4]{V}, since $f^{(\mathcal{B})}_{\boldsymbol{\alpha},\mathbf{h}}$
coincides with the Legendre transform of the rate function in
\eqref{tilted_rate_function}.
Alternatively, a more direct proof can be obtained by adapting the argument of
\cite[Thm.~3.1]{CD}. We defer this approach to the appendix (see Subsec.~\ref{alternative_der_fe}).
\end{proof}

\begin{remark}[Alternative representation of $f^{(\mathcal B)}_{\boldsymbol\alpha,\mathbf h}$]
By adding and subtracting
$\frac12\sum_{i,j\in[k]}\ln\frac{1}{1+e^{h_{ij}}}$
in \eqref{prob_var}, the last two terms can be rewritten as
\begin{align}\label{fr_en_repr}
&\frac{1}{2}\sum_{i,j \in [k]} h_{ij}
\, t_{ij}^{(\mathcal B)}(H_1,\widetilde g)
-\frac{1}{2}\mathcal I^{(\mathcal B)}(\widetilde g)
\notag\\
&\hspace{2cm}
=\frac{1}{2}\sum_{i,j\in[k]}
\int_{\mathcal B_i\times\mathcal B_j}
I_{p_{ij}}\bigl(g(x,y)\bigr)\,dx\,dy
-\frac{1}{2}\sum_{i,j\in[k]}
\ln\frac{1}{1+e^{h_{ij}}},
\end{align}
where
$p_{ij}:=\frac{e^{h_{ij}}}{1+e^{h_{ij}}}$,
and $I_{p_{ij}}$ denotes the relative entropy
introduced below \eqref{ER_rate_function}.

In the case $\boldsymbol\alpha=0$,
representation \eqref{fr_en_repr} shows that the rate function
\eqref{tilted_rate_function} reduces to
\begin{equation}\label{eq:altern_fe}
\frac{1}{2}\sum_{i,j\in[k]}
\int_{\mathcal B_i\times\mathcal B_j}
I_{p_{ij}}\bigl(g(x,y)\bigr)\,dx\,dy
-
\inf_{\widetilde g\in
\widetilde{\mathcal W}^{(k)}_{\mathcal B}}
\Bigl\{
\frac{1}{2}\sum_{i,j\in[k]}
\int_{\mathcal B_i\times\mathcal B_j}
I_{p_{ij}}\bigl(g(x,y)\bigr)\,dx\,dy
\Bigr\}.
\end{equation}
The additive constants
$\pm \frac12\sum_{i,j}\ln\frac{1}{1+e^{h_{ij}}}$
cancel out.
Moreover, the infimum in \eqref{eq:altern_fe}
is attained by the $\mathcal B$--block constant graphon
with values $(p_{ij})_{i,j\in[k]}$,
and since $I_p(p)=0$, the second term vanishes.
This recovers \cite[Eq.~(27)]{GP}.
\end{remark}

We conclude the section with the proof of Lem.~\ref{lem:r1-bound}.
\begin{proof}[proof of Lem.~\ref{lem:r1-bound}]
We prove \eqref{ineq:r1}, being \eqref{ineq:r2} analogous.
Since $0\le g^X\le 1$, from \eqref{eq:r1} we obtain\footnote{Set $\mathscr B := \mathcal B_i \times \mathcal B_j$,
$\mathscr B_n := \mathcal B_i^{(n)} \times \mathcal B_j^{(n)}$ and recall that $
\mathscr B \triangle \mathscr B_n
=
(\mathscr B \setminus \mathscr B_n)\,\cup\,(\mathscr B_n \setminus \mathscr B)$. To obtain \eqref{r1_bound}, we used on the domain of \eqref{eq:r1} the decomposition $\mathscr B
=
(\mathscr B \cap \mathscr B_n)\,\cup\,(\mathscr B \setminus \mathscr B_n)$ (similarly for $\mathscr B_n$), combined with triangle inequality.} 
\begin{align}\label{r1_bound}
\bigl|r^{(1)}_{ij,n}(X)\bigr|
&\le \int_{(\mathcal B_i\times\mathcal B_j)
             \,\triangle\, (\mathcal B_i^{(n)}\times \mathcal B_i^{(n)})}
        g^X(x,y)\,dx\,dy
\le \lambda\Bigl(
\mathcal B_i\times\mathcal B_j)
             \,\triangle\, (\mathcal B_i^{(n)}\times \mathcal B_i^{(n)})
  \Bigr). 
\end{align}
We now use the following geometric fact.

\begin{claim}\label{claim:geom}
For measurable sets $A,B,C,D\subset[0,1]$, one has
\begin{equation}\label{incl_sets}
(A\times C)\,\triangle\,(B\times D)
\subset (A\triangle B)\times[0,1]
      \;\cup\; [0,1]\times(C\triangle D),
\end{equation}
and consequently
\begin{equation}\label{dis_measures}
\lambda\bigl((A\times C)\,\triangle\,(B\times D)\bigr)
\le \lambda(A\triangle B) + \lambda(C\triangle D).
\end{equation}
\end{claim}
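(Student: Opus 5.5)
The plan is to prove the set inclusion \eqref{incl_sets} pointwise and then deduce \eqref{dis_measures} from subadditivity and the product structure of Lebesgue measure on $[0,1]^2$.

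For the inclusion, take $(x,y)\in(A\times C)\triangle(B\times D)$. By symmetry of the symmetric difference, it suffices to treat $(x,y)\in(A\times C)\setminus(B\times D)$; the other case follows by swapping the roles of $(A,C)$ and $(B,D)$. Then $x\in A$ and $y\in C$, and in addition either $x\notin B$ or $y\notin D$.
\begin{itemize}
\item If $x\notin B$, then $x\in A\setminus B\subset A\triangle B$. Since $y\in[0,1]$, we get $(x,y)\in(A\triangle B)\times[0,1]$.
\item If $y\notin D$, then $y\in C\setminus D\subset C\triangle D$, so $(x,y)\in[0,1]\times(C\triangle D)$.
\end{itemize}
In either case $(x,y)$ lies in the union on the right-hand side of \eqref{incl_sets}, which proves the inclusion.

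For the measure inequality, I apply monotonicity and then finite subadditivity of the two-dimensional Lebesgue measure to \eqref{incl_sets}. This gives
\[
\lambda\bigl((A\times C)\triangle(B\times D)\bigr)\le \lambda\bigl((A\triangle B)\times[0,1]\bigr)+\lambda\bigl([0,1]\times(C\triangle D)\bigr).
\]
The product rule $\lambda(E\times F)=\lambda(E)\lambda(F)$, together with $\lambda([0,1])=1$, reduces the right-hand side to $\lambda(A\triangle B)+\lambda(C\triangle D)$. All sets involved are measurable because measurable sets are closed under symmetric differences and products, so every step is justified.

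Neither step presents a genuine obstacle; the only point needing care is the case split on which coordinate fails membership. In the application that follows, I would take $A=\mathcal B_i$, $B=\mathcal B_i^{(n)}$, $C=\mathcal B_j$, $D=\mathcal B_j^{(n)}$. Because both partitions consist of consecutive intervals, each endpoint differs by at most $k\eta_n(\mathbf B)$, so $\lambda(\mathcal B_i\triangle\mathcal B_i^{(n)})\le(2k-1)\eta_n(\mathbf B)$. This is how the constant $\gamma^{(1)}_k=4k-2$ arises.
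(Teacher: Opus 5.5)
Your proof is correct and matches the paper's argument: the same pointwise case split on whether $x\notin B$ or $y\notin D$ (the other half of the symmetric difference handled by symmetry), followed by monotonicity, subadditivity, and $\lambda(E\times[0,1])=\lambda(E)$. One small point in your closing aside, which is not part of the claim: the bound $(2k-1)\eta_n(\mathbf B)$ needs the sharper estimate $(k-1)\eta_n(\mathbf B)$ for the left endpoint, since bounding both endpoints by $k\eta_n(\mathbf B)$ only gives $2k\,\eta_n(\mathbf B)$.
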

Applying Claim~\ref{claim:geom} with
\[
A=\mathcal B_i,\quad B=\mathcal{B}_i^{(n)},\quad
C=\mathcal B_j,\quad D=\mathcal{B}_j^{(n)},
\]
we deduce
\begin{align*}
\lambda\Bigl(
    (\mathcal B_i\times\mathcal B_j)
    \,\triangle\,
    (\mathcal{B}_i^{(n)}\times \mathcal{B}_j^{(n)})
  \Bigr)
&\le \lambda(\mathcal B_i\triangle \mathcal{B}_i^{(n)})
   + \lambda(\mathcal B_j\triangle \mathcal{B}_j^{(n)}).
\end{align*}
Hence
\begin{equation}\label{eq:r1-bound-by-1d}
\bigl|r^{(1)}_{ij,n}(X)\bigr|
\le \lambda(\mathcal B_i\triangle \mathcal{B}_i^{(n)})
   + \lambda(\mathcal B_j\triangle \mathcal{B}_j^{(n)}).
\end{equation}
Because both partitions are consecutive, we can write
\[
\mathcal B_i = [a_i,a_i+b_i),\qquad
\mathcal{B}_i^{(n)} = [c_i,c_i + w_i/n),
\qquad i=1,\dots,k,
\]
where
\[
a_i := \sum_{r<i} b_r,\qquad
c_i := \sum_{r<i} \frac{w_r}{n},
\]
and the quantities $b_i$ and $w_i$ were introduced in Assumption~\ref{ass:partition}.
Now, the symmetric difference of two intervals on the real line
is the union of at most two disjoint intervals.
In particular, its Lebesgue measure is bounded by the sum of the
absolute differences of the endpoints, namely
\[
\lambda(\mathcal B_i\triangle \mathcal{B}_i^{(n)})
 \leq |a_i - c_i| + |(a_i+b_i)-(c_i+w_i/n)|.
\]
From the triangle inequality
$
\bigl|(a_i+b_i)-(c_i+w_i/n)\bigr|
\le |a_i-c_i| + |b_i - w_i/n|,
$
we obtain
\begin{equation}\label{eq:symm-diff-ai-ci}
\lambda(\mathcal B_i\triangle \mathcal{B}_i^{(n)})
\le 2|a_i-c_i| + |b_i - w_i/n|.
\end{equation}
Now note that
\[
a_i - c_i
= \sum_{r<i} (b_r - w_r/n),
\]
and therefore
\[
|a_i-c_i|
\le \sum_{r<i} |b_r - w_r/n|
\overset{\eqref{def:eta}}{\le} (i-1)\,\eta_n
\le (k-1)\,\eta_n.
\]
Combining this with \eqref{eq:symm-diff-ai-ci} and the fact that
$|b_i-w_i/n|\le\eta_n$, we get
\[
\lambda(\mathcal B_i\triangle \mathcal{B}_i^{(n)})
\le 2(k-1)\,\eta_n + \eta_n
= (2k-1)\,\eta_n.
\]
Similarly,
$
\lambda(\mathcal B_j\triangle \mathcal{B}_j^{(n)})
\le (2k-1)\,\eta_n.
$
Plugging these bounds into \eqref{eq:r1-bound-by-1d} yields
\[
\bigl|r^{(1)}_{ij,n}(X)\bigr|
\le 
 (4k-2)\,\eta_n.
\]
This proves \eqref{ineq:r1} with $\gamma^{(1)}_k:= 4k-2$.
Applying Claim~\ref{claim:geom} twice, first with
\[
A=\mathcal B_i\times\mathcal B_j,\quad
B=\mathcal B_i^{(n)}\times\mathcal B_j^{(n)},\quad
C=\mathcal B_\ell,\quad
D=\mathcal B_\ell^{(n)},
\]
and then again to control
\(
(\mathcal B_i\times\mathcal B_j)\triangle
(\mathcal B_i^{(n)}\times\mathcal B_j^{(n)}),
\)
we deduce
\begin{align*}
&\lambda\Bigl(
(\mathcal B_i\times\mathcal B_j\times\mathcal B_\ell)
\,\triangle\,
(\mathcal{B}_i^{(n)}\times\mathcal{B}_j^{(n)}\times\mathcal{B}_\ell^{(n)})
\Bigr)\\
&\hspace{2cm}\le
\lambda(\mathcal B_i\triangle \mathcal{B}_i^{(n)})
+\lambda(\mathcal B_j\triangle \mathcal{B}_j^{(n)})
+\lambda(\mathcal B_\ell\triangle \mathcal{B}_\ell^{(n)}).
\end{align*}
This proves \eqref{ineq:r2} with $\gamma^{(2)}_k:= 6k-3$.
\end{proof}
We complete this section with the proof of Claim~\ref{claim:geom}.
\begin{proof}[Proof of Claim~\ref{claim:geom}]
Take $(x,y)\in (A\times C)\triangle(B\times D)$. Then $(x,y)$ belongs to
exactly one of $A\times C$ and $B\times D$.

\emph{Case 1:} $(x,y)\in A\times C$ but $(x,y)\notin B\times D$.
Then either $x\notin B$ or $y\notin D$.
If $x\notin B$, then $x\in A\triangle B$ and hence
$(x,y)\in (A\triangle B)\times[0,1]$.
If $x\in B$ but $y\notin D$, then $y\in C\triangle D$ and hence
$(x,y)\in [0,1]\times(C\triangle D)$.
So in either case,
\[
(x,y)\in (A\triangle B)\times[0,1]
      \;\cup\; [0,1]\times(C\triangle D).
\]

\emph{Case 2:} $(x,y)\in B\times D$ but $(x,y)\notin A\times C$.
The argument is symmetric: either $x\notin A$ or
$y\notin C$, implying again
$(x,y)\in (A\triangle B)\times[0,1]
      \cup [0,1]\times(C\triangle D)$.
Thus, the inclusion in \eqref{incl_sets} holds. Taking Lebesgue measure we obtain the bound in \eqref{dis_measures}. 
\end{proof}
\subsection{Euler-Lagrange equations}

We consider the variational problem \eqref{varpr}:
\begin{equation}\label{eq:variational-alphaijk}
\sup_{\tilde g\in\widetilde{\mathcal W}^{(k)}_{\mathcal B}}
\left\{
\frac{1}{6}\sum_{i,j,\ell=1}^k \alpha_{ij\ell}\,t^{(\mathcal B)}_{ij\ell}(H_2,\tilde g)
+\frac12\sum_{i,j=1}^k h_{ij}\, t^{(\mathcal B)}_{ij}(H_1,\tilde g)
-\frac12\,\mathcal I^{(\mathcal B)}(\tilde g)
\right\}.
\end{equation}
We recall that $\mathcal B=(\mathcal B_1,\dots,\mathcal B_k)$ is our fixed (limiting) partition of $[0,1]$ and that we set
\begin{equation}\label{setting_part}
b_i:=\lambda(\mathcal B_i),\qquad i\in[k],\qquad \sum_{i=1}^k b_i=1.
\end{equation}

\begin{proof}[Proof of Thm.~\ref{thm:EL-alphaijk}]
As a preliminary step, we rewrite the functional \eqref{eq:variational-alphaijk} in a more convenient form, in which the dependence on the limiting partition is entirely encoded in the two functions $h_{\mathcal{B}}$ and $\Delta_{\alpha;\mathcal{B}}$, defined in \eqref{eq:hB} and \eqref{eq:aalpha-kernel}, respectively. This reformulation leads to the variational problem \eqref{eq:var_fbis} below. First note that
\begin{align}
\sum_{i,j=1}^k h_{ij}\,t^{(\mathcal B)}_{ij}(H_1,g)
&=\sum_{i,j=1}^k h_{ij}\int_{\mathcal B_i\times\mathcal B_j} g(x,y)\,dx\,dy\\
&=\int_{[0,1]^2} h_{\mathcal B}(x,y)\,g(x,y)\,dx\,dy,
\label{eq:edge-integral-rewrite}
\end{align}
where we recall $
h_{\mathcal B}(x,y)=\sum_{i,j=1}^k h_{ij}\,\mathds 1_{\mathcal B_i}(x)\,\mathds 1_{\mathcal B_j}(y).
$
Similarly,
\begin{align}
\sum_{i,j,\ell=1}^k \alpha_{ij\ell}\,t^{(\mathcal B)}_{ij\ell}(H_2,g)
&=\sum_{i,j,\ell=1}^k \alpha_{ij\ell}
\int_{\mathcal B_i\times\mathcal B_j\times\mathcal B_\ell}
g(x,y)g(y,z)g(z,x)\,dx\,dy\,dz \notag\\
&=\int_{[0,1]^3} \Delta_{\boldsymbol\alpha;\mathcal{B}}(x,y,z)\,g(x,y)g(y,z)g(z,x)\,dx\,dy\,dz,
\label{eq:tri-integral-rewrite}
\end{align}
where 
$\Delta_{\boldsymbol\alpha; \mathcal{B}}(x,y,z)
=
\sum_{i,j,\ell=1}^k \alpha_{ij\ell}\,
\mathds 1_{\mathcal B_i}(x)\,\mathds 1_{\mathcal B_j}(y)\,\mathds 1_{\mathcal B_\ell}(z).
$
Hence, the objective function in \eqref{eq:variational-alphaijk} can be written as
\begin{align}\label{eq:FBalpha-integral}
\mathcal F^{(\mathcal{B})}_{\boldsymbol\alpha,\mathbf h}(g)
&=
\frac16\int_{[0,1]^3} \Delta_{\boldsymbol\alpha; \mathcal{B}}(x,y,z)\,g(x,y)g(y,z)g(z,x)\,dx\,dy\,dz\\
&+\frac12\int_{[0,1]^2} h_{\mathcal B}(x,y)\,g(x,y)\,dx\,dy
-\frac12\int_{[0,1]^2} I(g(x,y))\,dx\,dy.
\end{align}
With this representation at hand, we focus on
\begin{equation}\label{eq:var_fbis}
\sup_{\tilde{g}\in \widetilde{W}} \mathcal{F}^{(\mathcal{B})}_{\boldsymbol{\alpha},\bf h}(\tilde{g}),
\end{equation}
and we assume that $\tilde{g}^{\star}$ is a maximizer of \eqref{eq:var_fbis}. We split the proof into two steps, following the argument of \cite[Thm.~6.3]{CD}.
\begin{enumerate}
\item We start from the proof of Item~\ref{item:el:eq}.
Assume first that a representative element $g\in \tilde{g}^{\star}$ satisfies $\delta\le g\le 1-\delta$ a.e.\ for some $\delta>0$.
Let $\varphi:[0,1]^2\to\mathbb R$ be bounded, measurable and symmetric, and set $g_u(x,y):=g(x,y)+u\varphi(x,y)$, $u\in \R$. Then, $g_u$ is as well a bounded, symmetric function from $[0,1]^2$ to $\R$.
Throughout the proof, we consider $|u|$ sufficiently small, so that $g_u\in\mathcal W$.
Then, by maximality we have $\mathcal F^{(\mathcal{B})}_{\boldsymbol\alpha,\mathbf h}(g_u)\le \mathcal F^{(\mathcal{B})}_{\boldsymbol\alpha,\mathbf h}(g)$, hence
\begin{equation}\label{eq:stationarity-alphaijk}
\left.\frac{d}{du}\mathcal F^{(\mathcal{B})}_{\boldsymbol\alpha,\mathbf h}(g_u)\right|_{u=0}=0.
\end{equation}
We compute the derivative in \eqref{eq:stationarity-alphaijk}, starting from the entropy term.
Since $I$ is $C^1$ on $(0,1)$, $u$ is assumed to be sufficiently small, and $g$ stays in $[\delta,1-\delta]$, Leibniz integral rule yields
\begin{align}\label{eq:el_derivedge}
&-\frac{1}{2}\left.\frac{d}{du}\right|_{u=0}\int_{[0,1]^2} I(g_u(x,y))\,dx\,dy
\notag\\
&\hspace{4cm}=-\frac{1}{2}\int_{[0,1]^2} I'(g(x,y))\,\varphi(x,y)\,dx\,dy \notag\\
&\hspace{4cm}=-\frac12\int_{[0,1]^2} \varphi(x,y)\ln\frac{g(x,y)}{1-g(x,y)}\,dx\,dy.
\end{align}
For the edge term we have
\begin{equation}\label{eq:var-edge-alphaijk}
\left.\frac{d}{du}\right|_{u=0}\left(\frac12\int_{[0,1]^2} h_{\mathcal B}(x,y)\,g_u(x,y)\,dx\,dy\right)
=\frac12\int_{[0,1]^2} h_{\mathcal B}(x,y)\,\varphi(x,y)\,dx\,dy.
\end{equation}
Finally, for the triangle term in \eqref{eq:FBalpha-integral}, we 
define 
\[
\Xi_{\boldsymbol\alpha;\mathcal{B}}(g):=\int_{[0,1]^3} \Delta_{\boldsymbol\alpha;\mathcal{B}}(x,y,z)\,g(x,y)g(y,z)g(z,x)\,dx\,dy\,dz.
\]
For each $(x,y,z)\in[0,1]^3$ we have
\begin{align}\label{eq:product}
g_u(x,y)\,g_u(y,z)\,g_u(z,x)
&=\bigl(g(x,y)+u\varphi(x,y)\bigr)\bigl(g(y,z)+u\varphi(y,z)\bigr)\notag\\
&\hspace{3cm}\times\bigl(g(z,x)+u\varphi(z,x)\bigr)
\end{align}
(again, since 
$u$ is taken sufficiently small and all the functions involved are bounded, differentiation can be interchanged with integration).
Expanding the product in \eqref{eq:product}, and isolating the terms of order $u$ and $u^2$, we get
\begin{align*}
g_u(x,y)\,g_u(y,z)\,g_u(z,x)
&=g(x,y)\,g(y,z)\,g(z,x)\\
& +u\Big(
\varphi(x,y)\,g(y,z)\,g(z,x)
+g(x,y)\,\varphi(y,z)\,g(z,x)\\
&+g(x,y)\,g(y,z)\,\varphi(z,x)
\Big)+u^2 R_u(x,y,z),
\end{align*}
where $R_u(x,y,z)$ denotes a remainder term collecting all contributions of order at least two in $u$.
Multiplying by $\Delta_{\boldsymbol\alpha;\mathcal{B}}(x,y,z)$ and integrating, we obtain
\begin{align}\label{eq:derivative_eltr}
\left.\frac{d}{du}\right|_{u=0}\Xi_{\boldsymbol\alpha;\mathcal{B}}(g_u)
&=\int_{[0,1]^3} \Delta_{\boldsymbol\alpha;\mathcal{B}}(x,y,z)\Big(
\varphi(x,y)\,g(y,z)\,g(z,x)
\notag\\
&\hspace{-0.5cm}+g(x,y)\,\varphi(y,z)\,g(z,x)
+g(x,y)\,g(y,z)\,\varphi(z,x)
\Big)\,dx\,dy\,dz.
\end{align}
The right-hand side of \eqref{eq:derivative_eltr} is the sum of three triple integrals, which can be written in the same form, using the symmetry of
$\Delta_{\boldsymbol\alpha;\mathcal B}$ and
$g$:
\begin{align}
&\left.\frac{d}{du}\right|_{u=0}\Xi_{\boldsymbol\alpha;\mathcal{B}}(g_u)=\\
&\hspace{2.5cm}3\int_{[0,1]^2} \varphi(x,y)\left(\int_0^1 \Delta_{\boldsymbol\alpha;\mathcal{B}}(x,y,z)\,g(x,z)g(y,z)\,dz\right)\,dx\,dy.
\label{eq:psi-variation}
\end{align}
Combining \eqref{eq:stationarity-alphaijk}, \eqref{eq:el_derivedge}, \eqref{eq:var-edge-alphaijk},
and \eqref{eq:psi-variation}, we find that for every bounded symmetric $\varphi$,
\begin{align*}
\left.\frac{d}{du}\mathcal F^{(\mathcal{B})}_{\boldsymbol\alpha,\mathbf h}(g_u)\right|_{u=0}&=\int_{[0,1]^2} \varphi(x,y)\Bigg[
\frac{1}{2}\int_0^1 \Delta_{\boldsymbol\alpha;\mathcal{B}}(x,y,z)\,g(x,z)g(y,z)\,dz\\
&\hspace{2cm}+\frac12 h_{\mathcal B}(x,y)
-\frac12\ln\frac{g(x,y)}{1-g(x,y)}
\Bigg]dx\,dy=0.
\end{align*}
Choosing $\varphi(x,y)$ equal to the bracketed term (which is bounded, as $g$ is bounded away from $0$ and $1$), yields that the bracket is $0$ a.e., i.e.
\begin{equation}\label{eq:EL-pointwise-alphaijk-raw}
\ln\frac{g(x,y)}{1-g(x,y)}
=
h_{\mathcal B}(x,y)
+\int_0^1 \Delta_{\boldsymbol\alpha;\mathcal{B}}(x,y,z)\,g(x,z)g(y,z)\,dz.
\end{equation}
This proves \eqref{eq:logistic-pointwise-alphaijk}, by recalling \eqref{eq:Aalpha-def}.
\item We now move to the proof of Item~\ref{item:el:interior}.
Fix $p\in(0,1)$ and define the truncation perturbation
\[
g_{p,u}(x,y):=(1-u) g(x,y)+u\max\{g(x,y),p\},\qquad u\in[0,1].
\]
Then $g_{p,u}\in\mathcal W$ and 
\begin{equation}\label{eq:deriv_gu}
\left.\frac{d}{du}g_{p,u}(x,y)\right|_{u=0}=(p- g(x,y))_+.
\end{equation}
Set
\[
\varphi_p(x,y):=\max\{g(x,y),p\}- g(x,y)=(p-g(x,y))_+,
\]
where $(\cdot)_+$ denotes the positive part.
Then, for every $u\in[0,1]$ and a.e.\ $(x,y)\in[0,1]^2$,
\begin{equation}\label{eq:gp-affine}
g_{p,u}(x,y)= g(x,y)+u\,\varphi_p(x,y).
\end{equation}

Note that $\varphi_p$ is measurable, symmetric, and bounded with $0\le \varphi_p(x,y)\le p$.
The same steps performed in Item~\ref{item:el:eq} for computing the directional derivative at $u=0$, combined with \eqref{eq:deriv_gu}, yield
\begin{align}\label{eq:derivative-at-0}
&\left.\frac{d}{du}\mathcal F^{(\mathcal{B})}_{\boldsymbol\alpha,\mathbf h}(g_{p,u})\right|_{u=0}
=
\int_{[0,1]^2} 
\Bigg[
\frac{1}{2}\int_0^1 \Delta_{\boldsymbol\alpha;\mathcal{B}}(x,y,z)\, g(x,z) g(y,z)\,dz
\\&\hspace{2cm}+ \frac12 h_{\mathcal B}(x,y)
-\frac12\ln\frac{g(x,y)}{1- g(x,y)}
\Bigg](p- g(x,y))_+\,dx\,dy.
\end{align}
We adopt the convention that the integrand equals $+\infty$ when
$g(x,y)=0$, and equals $0$ when $g(x,y)=1$.
Now, we observe that  
\begin{equation}\label{eq:Gamma-bounded}
\|\Delta_{\boldsymbol\alpha;\mathcal{B}}\|_\infty
\le \max_{i,j,\ell\in[k]}|\alpha_{ij\ell}|
=:\alpha_{\infty}<\infty,
\end{equation}
and  
$0\le g(x,z)g(y,z)\le 1$ a.e. Hence, for a.e.\ $(x,y)$,
\begin{equation}\label{eq:triangle-lower}
\int_0^1 \Delta_{\boldsymbol\alpha;\mathcal{B}}(x,y,z)\, g(x,z) g(y,z)\,dz
\ge
-\int_0^1 |\Delta_{\boldsymbol\alpha;\mathcal{B}}(x,y,z)|\,dz
\ge -\alpha_{\infty}.
\end{equation}
Also, $h_{\mathcal B}$ is block-constant with values $h_{ij}$, so it is bounded:
\begin{equation}\label{eq:hB-bounded}
\|h_{\mathcal B}\|_\infty=\max_{i,j\in[k]}|h_{ij}|=:h_{\infty}<\infty,
\end{equation}

Combining \eqref{eq:triangle-lower} and \eqref{eq:hB-bounded}, we get the following pointwise lower bound:
for a.e.\ $(x,y)$,
\begin{equation}\label{eq:nonentropic-lower}
\frac{1}{2}\int_0^1 \Delta_{\boldsymbol\alpha;\mathcal{B}}(x,y,z) g(x,z) g(y,z)\,dz
+\frac12 h_{\mathcal B}(x,y)
\ge
-\frac{1}{2}\left(\alpha_{\infty}+ h_{\infty}\right)
=: -\kappa,
\end{equation}
where $\kappa\equiv \kappa(\boldsymbol\alpha,\mathbf h)>0$.
Now, assume by contradiction that $\lambda(\{ g<p\})>0$, where $\lambda$ denotes the Lebesgue measure.
On the set $A_p:=\{(x,y)\in[0,1]^2: g(x,y)<p\}$ we have $(p-g(x,y))_+>0$.
Moreover, for $(x,y)\in A_p$,
\[
\ln\frac{ g(x,y)}{1- g(x,y)} \le \ln\frac{p}{1-p},
\]
because the map $u\mapsto \ln\frac{u}{1-u}$ is strictly increasing on $(0,1)$.
Therefore, using \eqref{eq:nonentropic-lower}, for a.e.\ $(x,y)\in A_p$ the square bracket in \eqref{eq:derivative-at-0} satisfies
\begin{align}
&\frac{1}{2}\int_0^1 \Delta_{\boldsymbol\alpha;\mathcal B}(x,y,z)\, g(x,z) g(y,z)\,dz
+\frac12\,h_{\mathcal B}(x,y)
-\frac12\,\ln\frac{ g(x,y)}{1- g(x,y)}
\notag\\
&\ge
-\kappa-\frac12\ln\frac{p}{1-p}.
\label{eq:bracket-lower-on-Ap}
\end{align}
Now choose $p\in(0,1)$ so small that
$-\kappa-\frac12\ln\frac{p}{1-p}>0$.
Then, \eqref{eq:bracket-lower-on-Ap} implies that the bracket is strictly positive on $A_p$,
and hence the integrand in \eqref{eq:derivative-at-0} is strictly positive on $A_p$.
Since $\lambda(A_p)>0$, it follows that
\begin{equation}\label{deriv_u_positiv}
\left.\frac{d}{du}\right|_{u=0}\mathcal F^{(\mathcal{B})}_{\boldsymbol\alpha,\mathbf h}(g_{p,u})>0.
\end{equation}
Recall that $\tilde g^{\star}$ was assumed to be a maximizer of \eqref{eq:var_fbis}. 
By construction, $g_{p,u}\in\mathcal {W}$ for all $u\in[0,1]$, and satisfies 
$g_{p,0}=g$ for some representative $g\in \tilde g^{\star}$. 
It follows that the map
\[
u \longmapsto \mathcal F^{(\mathcal B)}_{\boldsymbol\alpha,\mathbf h}(g_{p,u})
\]
attains its maximum at $u=0$.
In particular its right-derivative at $0$ must satisfy
\[
\left.\frac{d}{du}\right|_{u=0^+}\mathcal F^{(\mathcal{B})}_{\boldsymbol\alpha,\mathbf h}(g_{p,u})\le 0,
\]
which contradicts \eqref{deriv_u_positiv}. Hence $\lambda(\{ g<p\})=0$, i.e.\ $ g\ge p$ a.e. A similar argument shows the upper bound in \eqref{eq:interior-alphaijk}, thus completing the proof.
\end{enumerate}

\end{proof}

\subsection{Scalar problem for the free energy}

\begin{proof}[Proof of Thm.~\ref{thm:scalar_probl}, Items~\ref{scal:item1}\& \ref{scal:item2}]
We use the non-linear part (i.e. the triangle term) of the functional in \eqref{varpr} to identify the structural
form of the maximizer. Recall the cell-restricted subgraph density  \eqref{def_graphon_hom_density}. The following H\"older inequality applies:
\begin{align*}
t^{(\bf{\mathcal{B})}}_{\ell_1 \ell_2\cdots \ell_m}(H,g) &= \int_{[\mathcal{B}_{\ell_1} \times \mathcal{B}_{\ell_2} \times\cdots \times \mathcal{B}_{\ell_{m}}]} \prod_{\{i,j\} \in \mathcal{E}(H)} g(x_i,x_j) \, dx_1 \dots dx_m . \\
&\leq \prod_{\{i,j\} \in \mathcal{E}(H)}\left[\int_{[\mathcal{B}_i \times \mathcal{B}_j \times\cdots \times \mathcal{B}_{\ell}]}  g^{E(H)}(x_i,x_j) \, dx_1 \dots dx_m\right]^{\frac{1}{|\mathcal E(H)|}}.
\end{align*}
We now specialize to the case where $H$ is a triangle (i.e., in our notation, $H\equiv H_2$); to ease notation we relabeled the indices $\ell_1,\ell_2,\ell_3$ as $i,j,\ell$. 
We get:
\begin{align}\label{ineq:Holder}
 t^{(\bf{\mathcal{B})}}_{i j \ell}(H_2,g)
&\le
\left(b_i\!\int_{\mathcal{B}_{j}\times \mathcal{B}_{\ell}} g^3(x,y) dx dy\right)^{1/3}
\left(b_j\!\int_{B_{\ell}\times B_{i}} g^3(x,y) dx dy\right)^{1/3} \notag\\
&\hspace{5cm}\times\left(b_{\ell}\!\int_{\mathcal{B}_{i}\times \mathcal{B}_{j}} g^3(x,y) dx dy\right)^{1/3} \notag\\
&=\prod_{\pi \in \mathrm{Cyc}(ij\ell)}
\left(
\,b_{\pi(i)}\,
\int_{\mathcal{B}_{\pi(j)} \times \mathcal{B}_{\pi(\ell)}}
g(x,y)^3\,dx\,dy
\right)^{\!1/3},
\end{align}
where $\mathrm{Cyc}(ij\ell)$ denotes the set of cyclic permutations of
$(i,j,\ell)$.
The above display, together with the non-negativity of $\boldsymbol{\alpha}$, yields the following inequality:
\begin{equation}\label{eq:holder_final}
\sum_{i, j, \ell=1}^{k}\alpha_{ij\ell}\;t^{(\bf{\mathcal{B})}}_{i j \ell}(H_2,g)\leq 
\sum_{i, j, \ell=1}^{k}\alpha_{ij\ell}\prod_{\pi \in \mathrm{Cyc}(ij\ell)}
\left(
\,b_{\pi(i)}\,
\int_{\mathcal{B}_{\pi(j)} \times \mathcal{B}_{\pi(\ell)}}
g(x,y)^3\,dx\,dy
\right)^{\!1/3}.
\end{equation}
Instead, for the edge term $t^{(\bf{\mathcal{B})}}_{i j}(H_1,g)$ equality is reached in \eqref{ineq:Holder}.
Then, the argument runs as follows: 
the objective functional in \eqref{varpr}, and consequently its supremum, 
is always less than or equal to the one obtained by replacing the triangle term 
with the right–hand side of \eqref{eq:holder_final} (notice that the edge term remains unchanged 
under H\"older inequality). 
If we can exhibit a function $\widetilde{g}^{\star(\mathcal{B})}$ for which the two suprema coincide, 
then the conclusion follows. Indeed, by construction, the supremum of the first 
functional is bounded above by that of the second; on the other hand, equality 
at $\widetilde{g}^{\star(\mathcal{B})}$ implies that the first supremum is also bounded below by the second. 
Combining the two inequalities, the two variational problems must therefore coincide.
Now, equality in \eqref{eq:holder_final} holds if and only if
\eqref{ineq:Holder} is an equality for each triple $(i,j,\ell)$, as the difference between the left and right-hand 
side of \eqref{ineq:Holder} is nonpositive.
By the characterization of the equality case in H\"older inequality
(see, e.g., \cite[Thm.~6.2]{F}), this is equivalent to the existence of a
measurable function $W(x,y,z)\ge 0$ and constants
$a_{ij\ell}, b_{ij\ell}, c_{ij\ell}>0$ such that, 
for every $i,j,\ell\in[k]$, the identities
\begin{align}
g(x,y)^3 &= a_{ij\ell}\,W(x,y,z), \label{eq:H1}\\
g(y,z)^3 &= b_{ij\ell}\,W(x,y,z), \label{eq:H2}\\
g(z,x)^3 &= c_{ij\ell}\,W(x,y,z), \label{eq:H3}
\end{align}
hold for a.e. $(x,y,z) \in 
\mathcal B_i\times\mathcal B_j\times\mathcal B_\ell$.
Now fix \(x\in\mathcal B_i\), \(y\in\mathcal B_j\) and vary 
\(z\in\mathcal B_\ell\). From \eqref{eq:H1} we get $\frac{g(x,y)^3 }{g(y,z)^3}= \frac{a_{ij\ell}}{b_{ij\ell}}=:\mu$ and similarly for the other ratios. Therefore conditions \eqref{eq:H1}--\eqref{eq:H3} translate into the fact that equality in \eqref{eq:holder_final} is reached if and only if, for some constants $\lambda, \mu\in \mathbb{R}^{+}$ 
\begin{equation}\label{eq:Holder_simple}
g(x,y)^3= \mu g(y,z)^3=\lambda g(z,x)^3
\qquad \text{a.e. on } \mathcal{B}_i\times \mathcal{B}_j \times \mathcal{B}_l.
\end{equation}
Fix $y\in\mathcal B_j$. 
From \eqref{eq:Holder_simple} we obtain
\[
g^{(y)}(x)^3=\mu\, g^{(y)}(z)^3
\quad\text{for a.e.\ }(x,z)\in\mathcal B_i\times\mathcal B_\ell.
\]
By Fubini's theorem, for a.e.\ $z\in\mathcal B_\ell$ the above relation 
holds for a.e.\ $x\in\mathcal B_i$.
Fix such a $z$. Then for a.e.\ $x_1,x_2\in\mathcal B_i$,
\[
g^{(y)}(x_1)^3=\mu g^{(y)}(z)^3
=
g^{(y)}(x_2)^3,
\]
so $g^{(y)}(x)$ is constant in $x$ on $\mathcal B_i$ (a.e.).
Similarly, 
we get that $g^{(y)}(z)$ is constant in $z$ on $\mathcal{B}_\ell$ (a.e.). Hence, for a.e.\ \(y\in\mathcal B_j\), there exist constants 
\(a(y), b(y)\) such that
\begin{equation}
g(x,y)=a(y)\quad\text{for a.e.\ }x\in\mathcal B_i,
\qquad
g(y,z)=b(y)\quad\text{for a.e.\ }z\in\mathcal B_\ell.
\label{eq:ayby}
\end{equation} 
We now use the second identity in \eqref{eq:Holder_simple},
\[
g(x,y)^3=\lambda g(z,x)^3
\quad\text{a.e.\ on }\mathcal B_i\times\mathcal B_j\times\mathcal B_\ell.
\]
Fix $(x,z)\in\mathcal B_i\times\mathcal B_\ell$ outside a null set 
such that the above holds for a.e.\ $y\in\mathcal B_j$.
Since the right-hand side does not depend on $y$, 
it follows that $g(x,y)$ is constant in $y$ on $\mathcal B_j$ (a.e.). 
Thus, for a.e.\ $(x,z)\in\mathcal{B}_i\times\mathcal{B}_\ell$, there exists a constant
$c(x,z)$ such that
\begin{equation}\label{eq:g_constant}
g(x,y)=c(x,z)\qquad\text{for a.e.\ }y\in\mathcal{B}_j.
\end{equation}
Now fix $x\in\mathcal{B}_i$ and pick $z_1,z_2\in\mathcal{B}_\ell$
outside a null set such that \eqref{eq:g_constant} holds for a.e.\ $y\in\mathcal{B}_j$ 
with $z=z_1$ and with $z=z_2$.
Now, we may choose $y\in\mathcal{B}_j$ such that both identities hold, and therefore
\[
c(x,z_1)=g(x,y)=c(x,z_2).
\]
Therefore $c(x,z)$ is constant in $z$ on $\mathcal{B}_\ell$ (a.e.), and we may write it as 
\begin{equation}
g(x,y)=c(x)\qquad\text{for a.e.\ }y\in\mathcal B_j.
\label{eq:cx}
\end{equation}
Let \(X_0\subset\mathcal B_i\) and \(Y_0\subset\mathcal B_j\) be full-measure
sets where \eqref{eq:ayby} and \eqref{eq:cx} hold.
For all \((x,y)\in X_0\times Y_0\),
\[
a(y)=g(x,y)=c(x).
\]
Hence for any \(y_1,y_2\in Y_0\) and \(x\in X_0\),
\[
a(y_1)=c(x)=a(y_2),
\]
so \(a(y)\) is constant on \(Y_0\).  
Since the argument is carried out for fixed blocks
$\mathcal B_i$ and $\mathcal B_j$, we denote this constant by $c_{ij}$.
It then follows that $c(x)=c_{ij}\in [0,1]$ for all $x\in X_0$.
Therefore,
\[
g(x,y)=c_{ij}
\qquad\text{for a.e.\ }(x,y)\in\mathcal B_i\times\mathcal B_j.
\]

By symmetry of the graphon we have \(c_{ij}=c_{ji}\).
This shows that any optimizer has the form 
$
g_{C}(x,y):=\sum_{1\le i,j\le k} c_{ij}\,
\mathds 1_{\mathcal B_i}(x)\,\mathds 1_{\mathcal B_j}(y).
$
Substituting this ansatz into \eqref{varpr}, we obtain the finite-dimensional
problem \eqref{eq:finite-dim-problem}, which admits at least one solution $C^{\star}$ since
the functional is continuous, and $\mathsf{C}_{\mathrm{sym}}$ is compact. 

\end{proof}

\begin{remark}
Suppose that $\alpha_{ij\ell}=1$ for all $i,j,\ell\in[k]$.
By applying again H\"older inequality to the r.h.s. of \eqref{eq:holder_final}, we get
\begin{align*}
&\sum_{i, j, \ell=1}^{k}\prod_{\pi \in \mathrm{Cyc}(ij\ell)}
\left(
\,b_{\pi(i)}\,
\int_{\mathcal{B}_{\pi(j)} \times \mathcal{B}_{\pi(\ell)}}
g(x,y)^3\,dx\,dy
\right)^{\!1/3}\\
&\leq  
\left[\sum_{i, j, \ell=1}^{k}
\,b_{i}\,
\int_{\mathcal{B}_{j} \times \mathcal{B}_{\ell}}
g(x,y)^3\,dx\,dy
\right]^{1/3}\\
&\hspace{0.5cm}\times\left[\sum_{i, j, \ell=1}^{k}
\,b_{j}\,
\int_{\mathcal{B}_{i} \times \mathcal{B}_{\ell}}
g(x,y)^3\,dx\,dy 
\right]^{1/3} \left[\sum_{i, j, \ell=1}^{k}
\,b_{\ell}\,
\int_{\mathcal{B}_{i} \times \mathcal{B}_{j}}
g(x,y)^3\,dx\,dy 
\right]^{1/3} \\
&=\int_{[0,1]^2}g^3(x,y)dx dy,
\end{align*}
where the last equality follows since, for each factor,
the sum $\sum_{r=1}^k |\mathcal{B}_r|$ can be factored out and equals $1$,
as $(\mathcal{B}_r)_{r=1}^k$ is a partition of $[0,1]$.
In particular, when $k=1$ (that is, in the single-community case) the
first and last terms in the above chain coincide.
\end{remark}

\begin{proof}[Proof of Thm.~\ref{thm:scalar_probl}, Item~ \ref{scal:item3}]
This is just a corollary of Thm.\ref{thm:EL-alphaijk}, Item~\ref{item:el:eq}. Consider $g_{C}$ as in \eqref{eq:gC-def}, i.e.
$
g_C(x,y)=\sum_{i,j=1}^k c_{ij}\,\mathds 1_{\mathcal B_i}(x)\mathds 1_{\mathcal B_j}(y).
$\\
It is easy to check that for all $i,j\in[k]$, Eq.~\eqref{eq:logistic-pointwise-alphaijk} reduces to
\begin{equation}\label{eq:fixed-point-cab-alphaijk}
c_{ij}
=
\frac{\exp\!\left(
h_{ij}+\sum_{\ell=1}^k b_\ell\,c_{i\ell}c_{j\ell}\,
\alpha_{ij\ell}
\right)}
{1+\exp\!\left(
h_{ij}+\sum_{\ell=1}^k b_\ell\,c_{i\ell}c_{j\ell}\,
\alpha_{ij\ell}
\right)}.
\end{equation}

We do this via \eqref{eq:EL-pointwise-alphaijk-raw}.
Fix $x\in\mathcal B_i$ and $y\in\mathcal B_j$.
For $z\in\mathcal B_\ell$, we have $g(x,z)=c_{i\ell}$ and $g(y,z)=c_{j\ell}$. 
Recalling from \eqref{eq:aalpha-kernel} that $\Delta_{\boldsymbol\alpha; \mathcal{B}}(x,y,z)
=
\sum_{i,j,\ell=1}^k \alpha_{ij\ell}\,
\mathds 1_{\mathcal B_i}(x)\,\mathds 1_{\mathcal B_j}(y)\,\mathds 1_{\mathcal B_\ell}(z)$, we get

\begin{align*}
\int_0^1 \Delta_{\boldsymbol\alpha;\mathcal{B}}(x,y,z)\,g(x,z)g(y,z)\,dz
&=\sum_{\ell=1}^k \int_{\mathcal B_\ell} 
\Delta_{\boldsymbol\alpha;\mathcal{B}}(x,y,z)\,c_{i\ell}c_{j\ell}\,dz\\
&=\sum_{\ell=1}^k b_\ell\,c_{i\ell}c_{j\ell}\alpha_{ij\ell}.
\end{align*}
Since $h_{\mathcal B}(x,y)=h_{ij}$ and $g(x,y)=c_{ij}$ for a.e.\ $(x,y)\in\mathcal B_i\times\mathcal B_j$,
plugging everything into \eqref{eq:EL-pointwise-alphaijk-raw} yields 
\begin{equation}\label{eq:EL-cab-alphaijk}
\ln\frac{c_{ij}}{1-c_{ij}}
=
h_{ij}+\sum_{\ell=1}^k b_\ell\,c_{i\ell}c_{j\ell}\,
\alpha_{ij\ell},
\end{equation} 
and the fixed-point equation
\eqref{eq:fixed-point-cab-alphaijk} follows.
\end{proof}

\begin{remark}[Reduction to standard edge-triangle]\label{rem:alpha-constant}
Equation \eqref{eq:fixed-point-cab-alphaijk} is a coupled fixed-point system for the matrix $C$.
It generalizes the standard fixed-point equation
$u=\frac{\exp(h+\alpha u^2)}{1+\exp(h+\alpha u^2)}$\footnote{Obtained from \eqref{eq:fixed-point-cab-alphaijk} for $k=1$; first analyzed in \cite[Lem.~12]{CDey}.}
by replacing the scalar $u^2$ with the bilinear form
$\sum_{\ell=1}^k b_\ell\,c_{i\ell}c_{j\ell}$ and by allowing block-dependent triangle weights through
$\alpha_{ij\ell}$.
In particular, the equations for different pairs $(i,j)$ are not independent.
\end{remark}

The proof Cor.~\ref{cor:unique-maximiser} is omitted from this section, as it is a straigthforward consequence of Thms.~\ref{thm:scalar_probl}--\ref{prop:small-alpha-tensor}. 

\subsection{Uniqueness}
\begin{proof}[Proof of Thm.~\ref{prop:small-alpha-tensor}]
Since $[0,1]^{k\times k}$ is a closed subset of the Banach space 
$(\mathbb{R}^{k\times k},\|\cdot\|_\infty)$, it is complete. 
The map $S^{(\mathcal B)}_{\boldsymbol\alpha;{\bf h}}$ 
is well defined on $[0,1]^{k\times k}$ and maps it into itself; 
hence Banach fixed-point theorem applies provided it is a contraction.
We now estimate its Lipschitz constant.
Let $\sigma(x)=\frac{e^x}{1+e^x}$.
Then $\sigma$ is $C^1$ and
\[
\sigma'(x)=\frac{e^x}{(1+e^x)^2}=\sigma(x)\bigl(1-\sigma(x)\bigr).
\]
Since $u(1-u)\le \frac14$ for all $u\in[0,1]$, we obtain
\[
0\le \sigma'(x)\le \frac14\qquad\text{for all }x\in\R.
\]
Hence, by the mean value theorem,
\begin{equation}\label{eq:logistic-Lip}
|\sigma(x)-\sigma(x')|\le \frac14\,|x-x'|
\qquad\text{for all }x,x'\in\R.
\end{equation}
Fix $C,\widehat{C}\in[0,1]^{k\times k}$ and $i,j\in[k]$. With a slight abuse of notation, we use the same symbol
$\mathcal T^{(\mathcal B)}_{\boldsymbol\alpha}$
to denote both the operator acting on graphons (recall \eqref{eq:Aalpha-def}) and its restriction
to $\mathcal B$--block graphons, identified with matrices
$C\in[0,1]^{k\times k}$. Define
\begin{equation}\label{eq:Tmap-tensor}
(\mathcal T^{(\mathcal B)}_{\boldsymbol\alpha}(C))_{ij}:=\sum_{\ell=1}^k b_\ell\,c_{i\ell}c_{j\ell}\alpha_{ij\ell},
\end{equation}
so that $(S^{(\mathcal B)}_{\boldsymbol\alpha; {\bf h}}(C))_{ij}=\sigma(h_{ij}+(\mathcal T^{(\mathcal B)}_{\boldsymbol\alpha}(C))_{ij})$ (recall \eqref{eq:Smap-def}).
Using \eqref{eq:logistic-Lip} with $x=h_{ij}+(S^{(\mathcal B)}_{\boldsymbol\alpha; {\bf h}}(C))_{ij}$ and $x'=h_{ij}+(S^{(\mathcal B)}_{\boldsymbol\alpha; {\bf h}}(\widehat{C}))_{ij}$, we get
\begin{equation}\label{eq:Tdiff1}
|(S^{(\mathcal B)}_{\boldsymbol\alpha; {\bf h}}(C))_{ij}-(S^{(\mathcal B)}_{\boldsymbol\alpha; {\bf h}}(\widehat{C}))_{ij}|
\le \frac14\,|(\mathcal T^{(\mathcal B)}_{\boldsymbol\alpha}(C))_{ij}-(\mathcal T^{(\mathcal B)}_{\boldsymbol\alpha}(\widehat{C}))_{ij}|.
\end{equation}
Next, by linearity and the triangle inequality,
\begin{align}
|(\mathcal T^{(\mathcal B)}_{\boldsymbol\alpha}(C))_{ij}-(\mathcal T^{(\mathcal B)}_{\boldsymbol\alpha}(\widehat{C}))_{ij}|
&=
\left|
\sum_{\ell=1}^k b_\ell\,
\alpha_{ij\ell}
\bigl(c_{i\ell}c_{j\ell}-\hat{c}_{i\ell}\hat{c}_{j\ell}\bigr)
\right| \notag\\
&\le
\sum_{\ell=1}^k b_\ell\,
|\alpha_{ij\ell}|\,
|c_{i\ell}c_{j\ell}-\hat{c}_{i\ell}\hat{c}_{j\ell}| \notag\\
&\le
\|\boldsymbol\alpha\|_\infty\sum_{\ell=1}^k b_\ell\,|c_{a\ell}c_{b\ell}-c'_{a\ell}c'_{b\ell}|,
\label{eq:Sdiff1}
\end{align}
where we recall $ \|\boldsymbol\alpha\|_\infty
=
\max_{i,j,\ell\in[k]}|\alpha_{ij\ell}|$.
We now bound the product difference. For each $\ell$,
\[
c_{i\ell}c_{j\ell}-\hat{c}_{i\ell}\hat{c}_{j\ell}
=(c_{i\ell}-\hat{c}_{i\ell})c_{j\ell}+\hat{c}_{i\ell}(c_{j\ell}-\hat{c}_{j\ell}),
\]
hence
\begin{equation}\label{eq:prod-diff}
|c_{i\ell}c_{j\ell}-\hat{c}_{i\ell}\hat{c}_{j\ell}|
\le |c_{i\ell}-\hat{c}_{i\ell}|\,c_{j\ell}+\hat{c}_{i\ell}\,|c_{j\ell}-\hat{c}_{j\ell}|.
\end{equation}
Since $C,\widehat{C}\in[0,1]^{k\times k}$, we have $c_{j\ell}\le 1$ and $\hat{c}_{i\ell}\le 1$, while
$|c_{i\ell}-\hat{c}_{i\ell}|\le \|C-\widehat{C}\|_\infty$ and $|c_{j\ell}-\hat{c}_{j\ell}|\le \|C-\widehat{C}\|_\infty$ %
Plugging these bounds into \eqref{eq:prod-diff} yields
\begin{equation}\label{eq:prod-diff2}
|c_{i\ell}c_{j\ell}-\hat{c}_{i\ell}\hat{c}_{j\ell}|
\le 2\,\|C-C'\|_\infty.
\end{equation}
Combining \eqref{eq:Sdiff1} and \eqref{eq:prod-diff2} (and recalling \eqref{setting_part}) gives
\begin{equation}\label{eq:Sdiff2}
|(\mathcal T^{(\mathcal B)}_{\boldsymbol\alpha}(C))_{ij}-(\mathcal T^{(\mathcal B)}_{\boldsymbol\alpha}(\widehat{C}))_{ij}|
\leq
2\|\boldsymbol\alpha\|_\infty\,\|C-\widehat{C}\|_\infty.
\end{equation}
Finally, injecting \eqref{eq:Sdiff2} into \eqref{eq:Tdiff1} yields, for all $i,j$,
\[
|(S^{(\mathcal B)}_{\boldsymbol\alpha; {\bf h}}(C))_{ij}-(S^{(\mathcal B)}_{\boldsymbol\alpha; {\bf h}}(\widehat{C}))_{ij}|
\leq 
\frac{1}{2}\,\|\boldsymbol\alpha\|_\infty\,\|C-\widehat{C}\|_\infty.
\]
Taking the maximum over $(i,j)$ proves the Lipschitz bound with constant
$L= \frac12 \|\boldsymbol{\alpha}\|_\infty$, yielding \eqref{eq:Lipschitz-constant-tensor}.
If $L<1$, then $S^{(\mathcal B)}_{\boldsymbol\alpha; {\bf h}}$ is a contraction on the complete metric space $[0,1]^{k\times k}$.
By Banach's fixed-point theorem, $S^{(\mathcal B)}_{\boldsymbol\alpha; {\bf h}}$ admits a unique fixed point $C^{\star}\equiv C^{\star}(\boldsymbol{\alpha},{\bf h})\in[0,1]^{k\times k}$.
Finally, by definition  \eqref{eq:Smap-def}, the fixed-point equation $S^{(\mathcal B)}_{\boldsymbol\alpha; {\bf h}}(C)=C$
is exactly the componentwise system \eqref{eq:fixed-point-scalar}.
Therefore the fixed point $C^{\star}$ is the unique solution of \eqref{eq:fixed-point-scalar}. 
\end{proof}

\subsection{SLLN}

\begin{proof}[Proof of Theorem~\ref{thm_ExpC}]
Define the finite-size free energy
\[
f^{{(\bf B})}_{n;\boldsymbol\alpha,\mathbf h}:=\frac{1}{n^2}\ln Z^{({\bf B})}_{n;\boldsymbol\alpha,\mathbf h},
\]
where $Z^{({\bf{ B}})}_{n;\boldsymbol\alpha,\mathbf h}$ is the partition function associated with
the Hamiltonian \eqref{Hamilt_ERG2}. 
For $s\in\mathbb{R}$, we introduce the scaled cumulant generating function associated with
the edge count $E_n$, defined as
\begin{equation}\label{eq:scgf-def}
c^{({\bf B})}_n(s)
:=
\frac{2}{n^2}\ln
\mathbb{E}^{({\bf{B}})}_{n;\boldsymbol\alpha,\mathbf h}
\bigl[\exp\bigl(s\,E_n\bigr)\bigr],
\end{equation}
where $\mathbb{E}^{({\bf{B}})}_{n;\boldsymbol\alpha,\mathbf h}$ denotes expectation with respect
to the Gibbs measure induced by the Hamiltonian
$\mathcal H^{(\mathbf B)}_{n;\boldsymbol\alpha,\mathbf h}$.
We prove exponential convergence of the edge density via the sequence of functions \eqref{eq:scgf-def}. By \cite[II.6.3]{E}, this boils down to proving that the limiting cumulant generating 
function $
c^{({\mathcal B})}(s)$ (see \eqref{eq:c-limit} below) is differentiable at $s=0$ and that its derivative at the origin 
coincides with the claimed limit.
A direct calculation yields
\begin{align}
c^{({\bf B})}_n(s)
&=
\frac{2}{n^2}\ln
\sum_{X\in\mathcal A^{({\bf{B}})}_n}
\frac{
\exp\!\bigl(
\mathcal H^{(\mathbf B)}_{n;\boldsymbol\alpha,\mathbf h}(X)
+s\,E_n(X)
\bigr)
}{
Z^{(\mathbf B)}_{n;\boldsymbol\alpha,\mathbf h}
}.
\label{eq:cgf-start}
\end{align}
Since $
E_n(X)
=
\sum_{i,j\in[k]}
\ \sum_{\substack{u\in B_i^{(n)},\ v\in B_j^{(n)}\\ u<v}}
X_{uv}
$ we rewrite the numerator in \eqref{eq:cgf-start} as
\begin{equation}\label{eq:identity_s}
\mathcal H^{(\mathbf B)}_{n;\boldsymbol\alpha,\mathbf h}(X)
+s\,E_n(X)
=
\mathcal H^{(\mathbf B)}_{n;\boldsymbol\alpha,\mathbf h+s\mathbf 1}(X),
\end{equation}
where $\mathbf 1\in\mathbb{R}^{k\times k}$ denotes the identity matrix.
This yields
\begin{equation}\label{eq:ratio-Z}
c^{({\bf B})}_n(s)
=
\frac{2}{n^2}
\ln
\frac{
Z^{(\mathbf B)}_{n;\boldsymbol\alpha,\mathbf h+s\mathbf 1}
}{
Z^{(\mathbf B)}_{n;\boldsymbol\alpha,\mathbf h}
}
=
2\bigl(
f^{{(\bf B})}_{n;\boldsymbol\alpha,\mathbf h+s\mathbf 1}
-
f^{{(\bf B})}_{n;\boldsymbol\alpha,\mathbf h}
\bigr).
\end{equation}
By Thm~\ref{thm:LDP-free-energy}, the infinite-volume free energy
\[
f^{{(\mathcal B})}_{\boldsymbol\alpha,\mathbf h}=\lim_{n\to\infty}\frac{1}{n^2}\ln Z^{{(\bf B})}_{n;\boldsymbol\alpha,\mathbf h}
\]
exists and is finite for any real ${\bf{h}}\in \R^{k\times k}$. Therefore, for every fixed $s\in\R$,
\begin{equation}\label{eq:c-limit}
c^{({\mathcal B})}(s):=\lim_{n\to\infty}c^{({\bf B})}_n(s)
=
2\bigl(f^{({\mathcal B})}_{\boldsymbol\alpha,\mathbf h+s\mathbf 1}-f^{({\mathcal B})}_{\boldsymbol\alpha,\mathbf h}\bigr)
\end{equation}
exists and is finite.
Recall $0\le \|\boldsymbol\alpha\|_\infty<2$.
Then, by Thm.~\ref{thm:scalar_probl} and Cor.~\ref{cor:unique-maximiser},
$f^{(\mathcal{B})}_{\boldsymbol\alpha,\mathbf h+s\mathbf 1}$ is characterized by a scalar problem, which admits a unique maximizer 
$\widetilde g^{\star(\mathcal B)}$ with representative  $(g_{C^{\star}},\mathcal{B}^{(k)})$, where
$C^\star\equiv C^\star(s,\boldsymbol{\alpha}, {\bf h})$ solves \eqref{eq:fixed-point-cab-alphaijk}. To ease notation, in the following we drop the dependence on $\boldsymbol{\alpha}, {\bf h}$ and we simply write  $C^\star(s)$.
We now compute the derivative of $s\mapsto f^{(\mathcal{B})}_{\boldsymbol\alpha,\mathbf h+s\mathbf 1}$
at $s=0$. Note that the limiting free energy might be also rewritten as
\begin{equation}\label{eq:ft-as-Psi}
f^{(\mathcal{B})}_{\boldsymbol\alpha,\mathbf h+s\mathbf 1}=\Psi\bigl(s,C^\star(s)\bigr),
\end{equation}
where $\Psi(s,C)$ is the finite-dimensional objective function
\[
\Psi(s,C)
:=
\frac{1}{6}\sum_{i,j,\ell=1}^k \alpha_{ij\ell}\,t^{(\mathcal B)}_{ij\ell}(H_2,g_C)
+\frac12\sum_{i,j=1}^k (h_{ij}+s)\,t^{(\mathcal B)}_{ij}(H_1,g_C)
-\frac12\,\mathcal I^{(\mathcal B)}(g_C),
\]
for $(s,C)\in\R\times \mathsf{C}_{\mathrm{sym}}$.
Now, $\mathsf C_{\mathrm{sym}}$ is compact. Moreover, one can readily verify that $\Psi$ is continuous in $(s,C)$, and the partial derivative $\partial_s\Psi$ exists and is continuous in $C$. Since the maximizer is unique by Thm.~\ref{prop:small-alpha-tensor}, Danskin’s theorem  \footnote{Alternatively, one may  differentiate $\Psi(s,C^\star(s))$ directly via the chain rule; however, this would require sufficient regularity of the optimizer $s\mapsto C^\star(s)$ (e.g., differentiability), which is what Danskin’s theorem provides under the above assumptions.}  \cite[Thms.~I and II]{D} yields the differentiability of
$
s\mapsto \sup_{C\in\mathsf C_{\mathrm{sym}}}\Psi(s,C),
$
and
\begin{equation}\label{eq:derivative-reduced}
\frac{d}{ds} f^{(\mathcal{B})}_{\boldsymbol\alpha,\mathbf h+s\mathbf 1}
=
\partial_s \Psi\bigl(s,C^\star(s)\bigr)=\frac12\sum_{i,j=1}^k t^{(\mathcal B)}_{ij}(H_1,g_{C^{\star}}).
\end{equation}
Evaluating at $s=0$ we get 
\begin{equation}
\frac{d}{ds}\Big|_{s=0} f_{\boldsymbol\alpha,\mathbf h+s\mathbf 1}
=
\frac12\sum_{i,j=1}^k b_i b_j\,c^\star_{ij},
\end{equation}
being 
$t^{(\mathcal B)}_{ij}(H_1,g_{C^\star})=b_i b_j\,c^\star_{ij}$.
Finally, from 
\eqref{eq:c-limit} we obtain
$c'^{({\mathcal B})}(0)
=
2\,\frac{d}{ds}\Big|_{s=0} f_{\boldsymbol\alpha,\mathbf h+s\mathbf 1}$,
thus showing \eqref{LLN}.
\end{proof}
\begin{proof}[Proof of Theorem~\ref{thm_LLN}]
The proof immediately follows from \eqref{exp_conv}, combined with Rem.~\ref{rem:kolmogorov-block}, as
exponential convergence implies almost sure convergence as a consequence of Borel-Cantelli lemma (see e.g. \cite[Thm. II.6.4]{E}). Hence
\[
\frac{2E_n}{n^2}
\xrightarrow[n\to\infty]{\ \mathrm{a.s.}\ }
c'^{({\mathcal B})}(0)=
\sum_{i,j=1}^k b_i b_j\,c^\star_{ij},
\]
which proves \eqref{LLN}.
\end{proof}

\appendix
\section{Other helpful lemmas} \label{sec_append}

\subsection{\ER LDP} 
Let $(\mathbb{P}_{n;p}^{\mathrm{ER}})_{n \geq 1}$ be the sequence of measures of a (dense) \ER random graph on the space $\mathcal{G}_n$.
Via the map \eqref{map_pushforward}, this sequence induces a family of measures
$(\widetilde{\mathbb{P}}_{n;p}^{\mathrm{ER}})_{n \geq 1}$ on
$\widetilde{W}^{(k)}_{\mathcal{B}}$.
Moreover, the LDP proved in \cite{CV11} carries over to this space.

\begin{lemma}[LDP for \ER in the space $\widetilde{W}^{(k)}_{\mathcal{B}}$] \label{ER_LDP}

For each fixed $p \in (0,1)$, the sequence $(\widetilde{\mathbb{P}}_{n;p}^{\mathrm{ER}})_{n \geq 1}$ satisfies a large deviation principle on the space $(\widetilde{W}^{(k)}_{\mathcal{B}},\delta^{(k)}_{\square})$, with speed $n^2$ and good rate function

\begin{equation}\label{ER_rate_function}
\mathcal{I}^{(\mathcal{B})}_p(\widetilde{g}) := \frac{1}{2} \sum_{i,j \in [k]}\int_{\mathcal{B}_{i}\times \mathcal{B}_{j}}  I_p(g(x,y)) \, dx \, dy,
\end{equation}
where $g$ denotes the first component of any representative 
$g_{\mathcal B} = (g,\mathcal B^{(k)})$ 
of the equivalence class $\widetilde g \in \widetilde{W}^{(k)}_{\mathcal{B}}$, and
$
I_p(u) := u \ln \frac{u}{p} + (1-u)\ln \frac{1-u}{1-p}$, $u \in [0,1]$.
\end{lemma}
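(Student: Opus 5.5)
The plan is to transfer the Chatterjee--Varadhan LDP \cite{CV11} for $\mathbb P^{\mathrm{ER}}_{n;p}$ on $(\widetilde{\mathcal W},\delta_\square)$ to the colored quotient space. First, for a fixed labeled graph $X$, I would compare the two objects attached to it: the usual checkerboard graphon $g^X\in\mathcal W$, and the colored checkerboard graphon $g^X_{\mathcal B}=\mathfrak c_n(g^X_{\mathcal B_n})$. The point is that under the ER measure the law of $X$ is exchangeable. I would not use the color-blind map directly, since the LDP of \cite{CV11} is stated modulo all relabelings. Instead I would use the block decomposition $F$ from the proof of Lemma~\ref{compact}.

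Concretely, I would write $X$ as its $k^2$ sub-adjacency arrays $X^{(ij)}$, the bipartite blocks between $B_i^{(n)}$ and $B_j^{(n)}$. Under $\mathbb P^{\mathrm{ER}}_{n;p}$ these are independent: the diagonal ones are ER graphs on $w_i$ vertices, and the off-diagonal ones are bipartite ER arrays of size $w_i\times w_j$. For the diagonal blocks, \cite{CV11} gives an LDP in $(\widetilde{\mathcal W},\delta_\square)$ with speed $w_i^2\sim b_i^2n^2$ and rate $\frac12\int I_p$. For the bipartite blocks, I would prove the analogous LDP for bipartite graphons modulo separate relabelings of the two coordinates. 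This uses the same proof as \cite{CV11}: Szemerédi regularity gives exponential tightness and a finite net, and on step functions the rate is computed by Cramér/Sanov for independent Bernoulli's. Here symmetry is not imposed, so the rate is $\int I_p$ without the factor $\tfrac12$. Independence then yields a product LDP on $(\widetilde{\mathcal W})^{k^2}$, and the speeds $b_ib_jn^2$ combine into speed $n^2$ with rate $\frac12\sum_{i,j}\int_{\mathcal B_i\times\mathcal B_j}I_p(g)$ after rescaling each block back to $\mathcal B_i\times\mathcal B_j$. Each off-diagonal pair $(i,j),(j,i)$ is counted once with full weight, which equals $\tfrac12$ times the symmetric double count.

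Next, I would push this LDP through $F^{-1}$ on the image $F(\widetilde{\mathcal W}^{(k)}_{\mathcal B})$. This step is continuous because $F$ is an isometric embedding, and that image is closed, so the contraction principle applies and the LDP restricts to the closed image. The rate is $+\infty$ off the image only in the sense that the empirical tuples always lie in it; this is harmless since the LDP is then restricted to a closed set containing all the mass. The remaining step is to handle the mismatch between $\mathcal B_n$ and $\mathcal B$. The color-blind graphon $g^X_{\mathcal B}$ differs from the exact rescaled block object by a relabeling within blocks plus a region of measure $O(\eta_n(\mathbf B))$, as in Lemma~\ref{lem:r1-bound} and Claim~\ref{claim:geom}. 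Hence their $\delta^{(k)}_\square$ distance is at most $C_k\eta_n\to0$ uniformly in $X$, and the two sequences are exponentially equivalent and share the LDP. Goodness of the rate follows from compactness (Lemma~\ref{compact}) together with lower semicontinuity of $g\mapsto\int I_p(g)$ under cut convergence, which holds by convexity of $I_p$ as in \cite{CV11}.

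I expect the main obstacle to be the bipartite off-diagonal LDP. The setting has two independent relabelings and unequal side lengths, so it is not literally covered by \cite{CV11}. I would first try to embed a bipartite $w_i\times w_j$ array into a symmetric graphon on $\mathcal B_i\cup\mathcal B_j$ with zero diagonal blocks, and then invoke the colored-quotient structure. If that becomes circular, I would redo the regularity-lemma argument of \cite{CV11} directly in the bipartite setting.
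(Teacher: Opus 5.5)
\paragraph{The gap: $F$ is not injective.} Your argument breaks at the passage through $F^{-1}$. The map $F$ from the proof of Lemma~\ref{compact} is not injective, so it is not an isometric embedding (despite what is asserted there), and there is no inverse to contract through. An element of $\Sigma_{\mathcal B}$ applies \emph{one} relabeling of $\mathcal B_i$ simultaneously to all blocks in the $i$-th row and column. By contrast, $F$ quotients each block by its own relabelings, and so forgets how blocks sharing a color class are aligned.

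Here is a concrete example. Take $k=2$ and $b_1=b_2=\tfrac12$. Let $g$ equal $\mathds 1\{x<\tfrac14,\,y<\tfrac14\}$ on $\mathcal B_1\times\mathcal B_1$, equal $0$ on $\mathcal B_2\times\mathcal B_2$, and set $g(x,y)=g(y,x)=\mathds 1\{x<\tfrac14\}$ for $x\in\mathcal B_1$, $y\in\mathcal B_2$. Let $g'$ agree with $g$ except that the cross block is $\mathds 1\{x\ge\tfrac14\}$.
\begin{itemize}
\item The rescaled blocks of $g$ and $g'$ are blockwise equivalent: the $(1,2)$ blocks differ by $s\mapsto s+\tfrac12 \bmod 1$. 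Hence $F(\widetilde g)=F(\widetilde g')$.
\item The $\Sigma_{\mathcal B}$-invariant density $\int_{\mathcal B_1\times\mathcal B_1\times\mathcal B_2} g(x,y)g(y,z)\,dx\,dy\,dz$ equals $\tfrac1{32}$ for $g$ and $0$ for $g'$.
\item This density is $2$-Lipschitz in $d^{(k)}_\square$, so $\delta^{(k)}_\square(\widetilde g,\widetilde g')\ge\tfrac1{64}$.
\end{itemize}
So your product LDP on $(\widetilde{\mathcal W})^{k^2}$, while correct, lives on a strictly coarser quotient. The contraction principle pushes an LDP forward along continuous maps, and no map on $(\widetilde{\mathcal W})^{k^2}$ can recover $\widetilde g$ from $F(\widetilde g)$. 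Independence of the blocks does not restore the cross-block alignment, which is precisely what $\delta^{(k)}_\square$ measures. The bipartite LDP you flagged as the main obstacle is secondary to this.

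\paragraph{What is missing.} You need a genuinely joint argument on $\widetilde{\mathcal W}^{(k)}_{\mathcal B}$, that is, the Chatterjee--Varadhan scheme run in the colored space itself, as is done for $k$-colored graphons in \cite{GP}. It has three ingredients.
\begin{enumerate}
\item[(i)] For fixed $f$, the labeled set $\{h:\ d^{(k)}_\square((h,\mathcal B^{(k)}),(f,\mathcal B^{(k)}))\le\eta\}$ is an intersection of weakly closed sets. The weak-topology (Cram\'er-type) upper bound for the independent edges therefore applies to it.
\item[(ii)] A weak regularity lemma with parts refining $\mathcal B$ reduces the infimum over $\Sigma_{\mathcal B}$ to $e^{O(n)}$ block-preserving assignments of vertices to boundedly many parts. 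It also supplies the finite nets, which give the upper bound for closed sets of the quotient.
\item[(iii)] The lower bound follows by tilting to block step functions.
\end{enumerate}
Your remaining steps are fine and would plug into such an argument: the uniform $O(\eta_n)$ comparison between $\mathcal B_n$ and $\mathcal B$, and goodness via compactness plus weak lower semicontinuity of $\int I_p$.

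\paragraph{On direct contraction.} Your reluctance to contract directly was well founded. The paper's own proof contracts from a labeled LDP on $\mathcal W$ in the weak topology. However, the continuity it invokes, \eqref{ineq:comp_metrics}, is with respect to the cut norm, and $g\mapsto\widetilde g^{(\mathcal B)}$ is not weakly continuous. That one-line argument therefore does not supply the joint step either.
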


\begin{proof}
This is an immediate consequence of the contraction principle (see, e.g. \cite[Thm. 4.2.1]{DZ}). Indeed, the sequence $(\mathbb{P}_{n;p}^{\mathrm{ER}})_{n \geq 1}$ satisfies an LDP on $\mathcal{W}$ in the weak topology, with the lower semicontinuous rate function \eqref{ER_rate_function} (see \cite[Thm.2.2]{CV11}).
Moreover, the map $g\mapsto g_{\mathcal{B}}\mapsto \tilde{g}^{(\mathcal{B})}$ across the spaces $ \mathcal{W}\to \mathcal{W}^{(k)}_{\mathcal{B}}\to \widetilde{\mathcal{W}}^{(k)}_{\mathcal{B}}$ is continuous. Indeed, the first map $g\mapsto g_\mathcal{B}$  is continuous by \eqref{ineq:comp_metrics}. The projection to the quotient space is also continuous. Indeed, by \eqref{ct2} (recall that we set $g_\mathcal{B}=(g, \mathcal{B}^{(k)})$),
\[
\delta_{\square}^{(k)}\!\left(\widetilde{(g_1,\mathcal B^{(k)})},\widetilde{(g_2,\mathcal B^{(k)})}\right)
=\inf_{\sigma\in\Sigma_{\mathcal B}}
d_{\square}^{(k)}\!\left((g_1,\mathcal B^{(k)}),(g_2,\mathcal B^{(k)})^{\sigma}\right).
\]
Since $\mathrm{id}\in\Sigma_{\mathcal B}$, we may choose $\sigma=\mathrm{id}$, obtaining
\begin{align*}
\delta_{\square}^{(k)}\!\left(\widetilde{(g_1,\mathcal B^{(k)})},\widetilde{(g_2,\mathcal B^{(k)})}\right)
&\le
d_{\square}^{(k)}\!\left((g_1,\mathcal B^{(k)}),(g_2,\mathcal B^{(k)})^{\mathrm{id}}\right)\\
&=
d_{\square}^{(k)}\!\left((g_1,\mathcal B^{(k)}),(g_2,\mathcal B^{(k)})\right).
\end{align*}
Finally, one can immediately check that the rate function obtained by contraction coincides with 
\eqref{ER_rate_function}; this follows from the fact that the partition 
$\mathcal B^{(k)}$ can be reabsorbed into the full integral over $[0,1]^2$ 
and therefore does not affect the value of the functional. 
\end{proof}

\subsection{Continuity of cell-restricted homomorphism densities }

Recall the colored cut-type metric \eqref{cutcol} defined by
\begin{align}\label{eq:recall:cold}
&d_{\square}^{(k)}((g_1, \mathcal{B}^{(k)}), (g_2, \mathcal{B}^{(k)})) \notag\\
&\hspace{2cm}:= \sup_{\mathcal C, \mathcal D \subseteq [0,1]} \sum_{i,j=1}^{k} \left| \int_{\mathcal C \times \mathcal D} \mathds{1}_{\mathcal{B}_i \times \mathcal{B}_j}(x,y) \left( g_1(x,y) - g_2(x,y) \right) \, dx\,dy \right|.
\end{align}
The following two-sided comparison is useful.
\begin{lemma}[Comparison with the cut norm]\label{lem:cut-comparison}
Let $(g_1,\mathcal{B}^{(k)}) , (g_2, \mathcal{B}^{(k)}) \in \mathcal{W}_{\mathcal{B}}$.
Then
\begin{equation}\label{ineq:comp_metrics}
\| g_1 - g_2 \|_{\square}
\;\le\;
d_{\square}^{(k)}\bigl((g_1,\mathcal B^{(k)}),(g_2,\mathcal B^{(k)})\bigr)
\;\le\;
k^2\,\| g_1 - g_2 \|_{\square},
\end{equation}
where
\begin{equation}\label{eq:recall_cutn}
\| g_1 - g_2 \|_{\square}
:=
\sup_{\mathcal C,\mathcal D\subseteq[0,1]}
\left|
\int_{\mathcal C\times\mathcal D}
\bigl(g_1(x,y)-g_2(x,y)\bigr)\,dx\,dy
\right|.
\end{equation}
\end{lemma}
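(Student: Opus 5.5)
The plan is to compare the two suprema directly, working with a fixed pair of sets $\mathcal C,\mathcal D\subseteq[0,1]$ and then passing to the supremum. Throughout, set $\psi:=g_1-g_2$, which is bounded and measurable. The block sets $\mathcal B_i\times\mathcal B_j$, $i,j\in[k]$, partition $[0,1]^2$ up to null sets, since $\mathcal B^{(k)}$ partitions $[0,1]$.

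For the lower bound, fix $\mathcal C,\mathcal D$ and decompose the integral over $\mathcal C\times\mathcal D$ along this partition. This gives
\[
\int_{\mathcal C\times\mathcal D}\psi
=\sum_{i,j\in[k]}\int_{\mathcal C\times\mathcal D}\mathds 1_{\mathcal B_i\times\mathcal B_j}\,\psi .
\]
Applying the triangle inequality, $\bigl|\int_{\mathcal C\times\mathcal D}\psi\bigr|$ is at most the sum appearing in \eqref{eq:recall:cold}. That sum is in turn bounded by $d^{(k)}_\square\bigl((g_1,\mathcal B^{(k)}),(g_2,\mathcal B^{(k)})\bigr)$. Taking the supremum over $\mathcal C,\mathcal D$ yields $\|g_1-g_2\|_\square\le d^{(k)}_\square$.

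For the upper bound, the key observation is that the blocks are themselves measurable subsets of $[0,1]$. For each pair $(i,j)$ we have
\[
\int_{\mathcal C\times\mathcal D}\mathds 1_{\mathcal B_i\times\mathcal B_j}\,\psi=\int_{(\mathcal C\cap\mathcal B_i)\times(\mathcal D\cap\mathcal B_j)}\psi .
\]
Since $\mathcal C\cap\mathcal B_i$ and $\mathcal D\cap\mathcal B_j$ are admissible sets in \eqref{eq:recall_cutn}, each of the $k^2$ summands has absolute value at most $\|g_1-g_2\|_\square$. Summing over $(i,j)$ and taking the supremum over $\mathcal C,\mathcal D$ gives $d^{(k)}_\square\le k^2\|g_1-g_2\|_\square$.

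I do not expect a genuine obstacle. The only points needing care are minor. First, the suprema in both \eqref{eq:recall:cold} and \eqref{eq:recall_cutn} range over measurable sets, and this class is closed under intersection with the blocks $\mathcal B_i$. Second, the decomposition in the lower bound uses only that the blocks cover $[0,1]$ up to a null set, which Assumption~\ref{ass:partition} guarantees.
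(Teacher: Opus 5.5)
Your proposal is correct and follows the paper's own argument. The lower bound splits $\int_{\mathcal C\times\mathcal D}(g_1-g_2)$ over the blocks $\mathcal B_i\times\mathcal B_j$ and applies the triangle inequality, and the upper bound replaces $\mathcal C,\mathcal D$ by $\mathcal C\cap\mathcal B_i$, $\mathcal D\cap\mathcal B_j$, bounding each of the $k^2$ terms by $\|g_1-g_2\|_\square$.
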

\begin{proof}
We first prove the lower bound, starting from the integral in \eqref{eq:recall_cutn}.
Fix measurable sets \( \mathcal C,\mathcal D\subseteq[0,1] \).
Then
\begin{align*}
&\int_{\mathcal C\times\mathcal D}
\bigl(g_1(x,y)-g_2(x,y)\bigr)\,dx\,dy
=\\
&\hspace{2.5cm}\sum_{i,j=1}^k
\int_{\mathcal C\times\mathcal D}
\mathds{1}_{\mathcal B_i\times\mathcal B_j}(x,y)
\bigl(g_1(x,y)-g_2(x,y)\bigr)\,dx\,dy.
\end{align*}
Taking absolute values and using the triangle inequality,
\[
\left|
\int_{\mathcal C\times\mathcal D}\bigl(g_1(x,y)-g_2(x,y)\bigr)\,dx\,dy
\right|
\le
\sum_{i,j=1}^k
\left|
\int_{\mathcal C\times\mathcal D}
\mathds{1}_{\mathcal B_i\times\mathcal B_j}(x,y)
(g_1-g_2)
\right|.
\]
Taking the supremum over \( \mathcal C,\mathcal D \) yields
\(
\|g_1-g_2\|_{\square}
\le
d_{\square}^{(k)}((g_1,\mathcal B^{(k)}),(g_2,\mathcal B^{(k)}))
\) (recall \eqref{eq:recall:cold}).
For the upper bound, fix measurable sets \( \mathcal C, \mathcal D \subseteq [0,1] \). For each \( i,j \in \{1, \dots, k\} \), define \( \mathcal C_i := \mathcal C \cap \mathcal B_i \), \( \mathcal D_j := \mathcal D \cap \mathcal{B}_j \). Then
\begin{align}
&\left| \int_{\mathcal C \times \mathcal D} \mathds{1}_{\mathcal{B}_i \times \mathcal{B}_j}(x,y) (g_1(x,y) - g_2(x,y)) \, dx\,dy \right| \notag\\
&\hspace{2cm}= \left| \int_{\mathcal C_i \times \mathcal D_j} (g_1(x,y) - g_2(x,y)) \, dx\,dy \right| \leq \| g_1 - g_2 \|_{\square}.
\end{align}
Summing over all \( i,j \in \{1,\dots,k\} \), we get
\[
\sum_{i,j=1}^{k} \left| \int_{\mathcal C_i \times \mathcal D_j} (g_1(x,y) - g_2(x,y)) \, dx\,dy \right| \leq k^2  \| g_1 - g_2 \|_{\square}.
\]
Taking the supremum over all \( \mathcal C, \mathcal D \subseteq [0,1] \) yields the upper bound in \eqref{ineq:comp_metrics}.
\end{proof}

\begin{proposition}[Continuity of cell-restricted densities]\label{prop:continuity-cell-restricted}
Let $H$ be any finite simple graph on $V(H)=[m]$ with $\mathcal{E}(H)=\{e_1,\dots,e_E\}$, and $E:=|\mathcal E(H)|$ .
Fix $\boldsymbol\ell\in[k]^m$. Then, for all $(g_1,\mathcal{B}^{(k)}) , (g_2, \mathcal{B}^{(k)}) \in \mathcal{W}_{\mathcal{B}}$,

\begin{equation}\label{eq:continuity-bound}
\bigl|
t^{(\mathcal B)}_{\boldsymbol\ell}(H,g_1)
-
t^{(\mathcal B)}_{\boldsymbol\ell}(H,g_2)
\bigr|
\le
|\mathcal E(H)|\,
d_{\square}^{(k)}\bigl((g_1,\mathcal B^{(k)}),(g_2,\mathcal B^{(k)})\bigr).
\end{equation}
In particular, the map
\[
t^{(\mathcal B)}_{\boldsymbol\ell}(H,\cdot):
\bigl(\widetilde{W}_{\mathcal{B}}^{(k)},\delta^{(k)}_{\square}\bigr)\longrightarrow [0,1]
\]
is continuous with respect to the cut distance \eqref{ct2}.
\end{proposition}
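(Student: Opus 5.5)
The plan is the standard telescoping (counting-lemma) argument, adapted to the cells $\mathcal B_{\ell_1}\times\cdots\times\mathcal B_{\ell_m}$.

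First, write the difference of the two products over the edges as a telescoping sum. Put $\Pi_r(\mathbf x):=\prod_{s<r} g_2(e_s)\prod_{s>r} g_1(e_s)$, where $g(e_s)$ is shorthand for $g(x_a,x_b)$ with $e_s=\{a,b\}$. Then
\[
t^{(\mathcal B)}_{\boldsymbol\ell}(H,g_1)-t^{(\mathcal B)}_{\boldsymbol\ell}(H,g_2)=\sum_{r=1}^{E}\int_{\mathcal B_{\ell_1}\times\cdots\times\mathcal B_{\ell_m}}\bigl(g_1(e_r)-g_2(e_r)\bigr)\,\Pi_r(\mathbf x)\,d\mathbf x .
\]
It therefore suffices to show that each summand is bounded in absolute value by $d^{(k)}_\square$, and then sum over the $E$ edges.

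Fix $r$ and let $e_r=\{a,b\}$. Freeze all coordinates $x_c$ with $c\notin\{a,b\}$; by Fubini we integrate over $(x_a,x_b)\in\mathcal B_{\ell_a}\times\mathcal B_{\ell_b}$ first. Since $H$ is simple, each factor of $\Pi_r$ contains at most one of $x_a,x_b$. So, for fixed remaining coordinates, $\Pi_r=u(x_a)v(x_b)$ with measurable $u,v$ taking values in $[0,1]$. The inner integral is therefore
\[
\int_{[0,1]^2}\mathds 1_{\mathcal B_{\ell_a}\times\mathcal B_{\ell_b}}(x,y)\,(g_1-g_2)(x,y)\,u(x)v(y)\,dx\,dy .
\]

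The key step is to bound this by the $(\ell_a,\ell_b)$ term of the supremum in \eqref{cutcol}. Write $u(x)=\int_0^1\mathds 1\{u(x)>s\}\,ds$, and similarly for $v$. This turns the inner integral into an average over $(s,t)\in[0,1]^2$ of integrals over $\mathcal C_s\times\mathcal D_t$, where $\mathcal C_s=\{u>s\}$ and $\mathcal D_t=\{v>t\}$. Each such integral is at most
\[
\Bigl|\int_{\mathcal C\times\mathcal D}\mathds 1_{\mathcal B_{\ell_a}\times\mathcal B_{\ell_b}}(g_1-g_2)\Bigr|\le d^{(k)}_\square\bigl((g_1,\mathcal B^{(k)}),(g_2,\mathcal B^{(k)})\bigr),
\]
since one term of a nonnegative sum is bounded by the sum. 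If $x_a$ or $x_b$ is the first argument in $g(e_r)$, symmetry of $g_1,g_2$ handles the orientation. Integrating over the frozen coordinates, whose domain has measure at most $1$, gives the bound for one summand. Summing over $r$ yields \eqref{eq:continuity-bound}.

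For the continuity statement, $t^{(\mathcal B)}_{\boldsymbol\ell}$ is invariant under $\sigma\in\Sigma_{\mathcal B}$. This follows by a change of variables, since $\sigma$ is measure preserving and preserves each $\mathcal B_i$ up to null sets. Hence the bound applied to $g_1$ and $g_2^\sigma$, followed by the infimum over $\sigma$, gives $|t(H,\widetilde g_1)-t(H,\widetilde g_2)|\le E\,\delta^{(k)}_\square$, which is Lipschitz continuity on the quotient space. The range lies in $[0,1]$ because $0\le g\le1$ and the cell has measure at most $1$.

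The main obstacle is the layer-cake step: the cut-type metric \eqref{cutcol} only tests indicator rectangles, so $u\otimes v$ must be reduced to such rectangles while the block indicator is kept in place. The decomposition $u=\int_0^1\mathds 1\{u>s\}\,ds$ achieves this, using that $0\le u,v\le 1$.
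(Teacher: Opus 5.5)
Your proof is correct. It shares the paper's skeleton: telescoping over the edges, bounding each increment by the cut-type distance, then using $\Sigma_{\mathcal B}$-invariance to pass to the quotient. The key step, however, is organized differently, and the difference matters.

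The paper first integrates out the vertex not on the distinguished edge. This produces the weight $\mathds 1_{\mathcal B_{\ell_1}}(x_1)\mathds 1_{\mathcal B_{\ell_3}}(x_3)R_3(x_1,x_3)$. It then invokes the characterization of the cut norm by product test functions $u(x_1)v(x_3)$, taking $v=v_{x_1}$. Since $v_{x_1}$ depends on $x_1$, that weight is not of product form. Against a general kernel $K$ with $|K|\le 1$ one only controls $|\int FK|$ by $\|F\|_1$, not by $\|F\|_\square$, so the step is not justified as written.

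Your ordering is exactly what justifies it:
\begin{enumerate}
\item freeze the coordinates off $e_r$;
\item use simplicity of $H$ to see that $\Pi_r=u(x_a)v(x_b)$ with $u,v$ valued in $[0,1]$;
\item apply the rectangle bound;
\item only then integrate over the frozen coordinates.
\end{enumerate}
For the triangle, this amounts to recognizing $R_3$ as an average over $x_2$ of product kernels and exchanging the order of integration.

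A few further differences are worth noting. Your layer-cake decomposition $u=\int_0^1\mathds 1\{u>s\}\,ds$ makes the argument self-contained, replacing the citation to the test-function characterization. You bound each increment directly by the $(\ell_a,\ell_b)$ block term of $d^{(k)}_\square$, rather than going through $\|g_1-g_2\|_\square$ and Lemma~\ref{lem:cut-comparison}. The same computation would also give the paper's slightly stronger intermediate bound by $\|g_1-g_2\|_\square$, since each $(\mathcal C_s\cap\mathcal B_{\ell_a})\times(\mathcal D_t\cap\mathcal B_{\ell_b})$ is a rectangle. Finally, you treat general $H$ directly and give the change-of-variables proof of $\Sigma_{\mathcal B}$-invariance, both of which the paper only sketches.
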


\begin{proof}[Proof of Prop.~\ref{prop:continuity-cell-restricted}]

We show the proposition for the triangle subgraph and then we generalize it. 
Let $H=H_2$ on $V(H)=[3]$ with edges
\[
e_1=\{1,2\},\qquad e_2=\{2,3\},\qquad e_3=\{1,3\}.
\]
Fix $\boldsymbol\ell=(\ell_1,\ell_2,\ell_3)\in[k]^3$. 
We show that for every pair  $(g_1,\mathcal{B}^{(k)}) , (g_2, \mathcal{B}^{(k)}) \in \mathcal{W}_{\mathcal{B}}$,
\begin{equation}\label{eq:triangle-cell-Lipschitz}
\bigl|t^{(\mathcal B)}_{\boldsymbol\ell}(H_2,g_1)-t^{(\mathcal B)}_{\boldsymbol\ell}(H_2,g_2)\bigr|
\le 3\,\|g_1-g_2\|_\square
\overset{\eqref{ineq:comp_metrics}}{\le} 3\,d_\square^{(k)}\bigl((g_1,\mathcal B^{(k)}),(g_2,\mathcal B^{(k)})\bigr),
\end{equation}
where the cut-norm is defined in \eqref{eq:recall_cutn}. 
In particular, this shows that $(g,\mathcal B^{(k)})\mapsto t^{(\mathcal B)}_{\boldsymbol\ell}(H_2,g)$ is continuous
with respect to $d_\square^{(k)}$.
We now set $\mathbf x:=(x_1,x_2,x_3)\in[0,1]^3$ and $d\mathbf x:=dx_1dx_2dx_3$.
Define the cell indicator
\[
\mathds{1}_{\boldsymbol\ell}(\mathbf{x}):=
\prod_{v=1}^3 \mathbf 1_{\mathcal B_{\ell_v}}(x_v)
=
\mathds 1_{\mathcal B_{\ell_1}}(x_1)\,\mathds 1_{\mathcal B_{\ell_2}}(x_2)\,\mathds 1_{\mathcal B_{\ell_3}}(x_3).
\]
By definition of the cell-restricted triangle density,
\begin{equation}\label{eq:triangle-density-def}
t^{(\mathcal B)}_{\boldsymbol\ell}(H_2,g)
=
\int_{[0,1]^3}
\mathds 1_{\boldsymbol\ell}(\mathbf x)\,
g(x_1,x_2)\,g(x_2,x_3)\,g(x_1,x_3)\,d\mathbf x.
\end{equation}
For $\mathbf x\in[0,1]^3$ set
\begin{equation}\label{eq:P-defs}
\begin{aligned}
P_3(\mathbf x)&:=g_1(x_1,x_2)\,g_1(x_2,x_3)\,g_1(x_1,x_3),\\
P_2(\mathbf x)&:=g_1(x_1,x_2)\,g_1(x_2,x_3)\,g_2(x_1,x_3),\\
P_1(\mathbf x)&:=g_1(x_1,x_2)\,g_2(x_2,x_3)\,g_2(x_1,x_3),\\
P_0(\mathbf x)&:=g_2(x_1,x_2)\,g_2(x_2,x_3)\,g_2(x_1,x_3).
\end{aligned}
\end{equation}
Then $P_3-P_0=(P_3-P_2)+(P_2-P_1)+(P_1-P_0)$, and each increment factorizes as
\begin{equation}\label{eq:K3-inc-1}
\begin{aligned}
P_3(\mathbf x)-P_2(\mathbf x)
&=
g_1(x_1,x_2)\,g_1(x_2,x_3)\,\bigl(g_1-g_2\bigr)(x_1,x_3),
\\
P_2(\mathbf x)-P_1(\mathbf x)
&=
g_1(x_1,x_2)\,\bigl(g_1-g_2\bigr)(x_2,x_3)\,g_2(x_1,x_3),
\\
P_1(\mathbf x)-P_0(\mathbf x)
&=
\bigl(g_1-g_2\bigr)(x_1,x_2)\,g_2(x_2,x_3)\,g_2(x_1,x_3).
\end{aligned}
\end{equation}
Using \eqref{eq:triangle-density-def} with $g=g_1$ and $g=g_2$, and the
triangle inequality, we get
\begin{align}
\bigl|t^{(\mathcal B)}_{\boldsymbol\ell}(H_2,g_1)-t^{(\mathcal B)}_{\boldsymbol\ell}(H_2,g_2)\bigr|
&=
\left|
\int_{[0,1]^3}\mathds 1_{\boldsymbol\ell}(\mathbf x)\,\bigl(P_3(\mathbf x)-P_0(\mathbf x)\bigr)\,d\mathbf x
\right|
\notag\\
&\qquad\le
I_1+I_2+I_3,
\label{eq:K3-sum-errors}
\end{align}
where the three terms collect the integrals  corresponding to \eqref{eq:K3-inc-1}:
\begin{align}
I_3
&:=
\left|
\int_{[0,1]^3}
\mathds 1_{\boldsymbol\ell}(\mathbf x)\,
g_1(x_1,x_2)\,g_1(x_2,x_3)\,\bigl(g_1-g_2\bigr)(x_1,x_3)\,d\mathbf x
\right|,
\label{eq:K3-I3}\\
I_2
&:=
\left|
\int_{[0,1]^3}
\mathds 1_{\boldsymbol\ell}(\mathbf x)\,
g_1(x_1,x_2)\,\bigl(g_1-g_2\bigr)(x_2,x_3)\,g_2(x_1,x_3)\,d\mathbf x
\right|,
\label{eq:K3-I2}\\
I_1
&:=
\left|
\int_{[0,1]^3}
\mathds 1_{\boldsymbol\ell}(\mathbf x)\,
\bigl(g_1-g_2\bigr)(x_1,x_2)\,g_2(x_2,x_3)\,g_2(x_1,x_3)\,d\mathbf x
\right|.
\label{eq:K3-I1}
\end{align}

Now, the idea is to control each of the above terms by $\|g_1-g_2\|_\square$.
Without loss of generality we consider $I_3$; the bounds for $I_2$ and $I_1$ follow by the same argument,
after permuting the roles of $(x_1,x_2,x_3)$.
Fix $(x_1,x_3)\in[0,1]^2$. Define $F:=g_1-g_2$, and
\begin{equation}\label{eq:R3-def-clean}
R_3(x_1,x_3)
:=
\int_{[0,1]} \mathds 1_{\mathcal B_{\ell_2}}(x_2)\,g_1(x_1,x_2)\,g_1(x_2,x_3)\,dx_2.
\end{equation}

By Fubini's theorem applied to the bounded integrand in \eqref{eq:K3-I3}, we may integrate out $x_2$ first:
\begin{equation}\label{eq:I3-fubini-clean}
I_3
=
\left|
\int_{[0,1]^2}
F(x_1,x_3)\,
\mathds 1_{\mathcal B_{\ell_1}}(x_1)\,\mathds 1_{\mathcal B_{\ell_3}}(x_3)\,R_3(x_1,x_3)
\,dx_1\,dx_3
\right|.
\end{equation}
For each fixed $x_1$, the function
\[
v_{x_1}(x_3):=\mathds 1_{\mathcal B_{\ell_3}}(x_3)\,R_3(x_1,x_3)
\]
is measurable and takes values in $[0,1]$.
Hence, by the characterization of the cut norm as a supremum over bounded 
measurable test functions \cite[Eq.~(4.3) and Remark~4.1]{S}, 
applied to the functions
\[
u(x_1):=\mathds 1_{\mathcal B_{\ell_1}}(x_1),
\qquad
v(x_3):=v_{x_1}(x_3),
\]
we infer from \eqref{eq:I3-fubini-clean} that
\begin{equation}\label{bound_I3}
I_3 \le \|F\|_\square=\|g_1-g_2\|_\square.
\end{equation}

The same argument yields $I_2\le \|g_1-g_2\|_\square$ and $I_1\le \|g_1-g_2\|_\square$.
Therefore, by \eqref{eq:K3-sum-errors},
\[
\bigl|t^{(\mathcal B)}_{\boldsymbol\ell}(H_2,g_1)-t^{(\mathcal B)}_{\boldsymbol\ell}(H_2,g_2)\bigr|
\le I_1+I_2+I_3
\le 3\,\|g_1-g_2\|_\square.
\]
Finally, Lemma~\ref{lem:cut-comparison} implies
$\|g_1-g_2\|_\square\le d_\square^{(k)}\bigl((g_1,\mathcal B^{(k)}),(g_2,\mathcal B^{(k)})\bigr)$,
and consequently \eqref{eq:triangle-cell-Lipschitz}.
The continuity on the quotient space follows from \eqref{eq:continuity-bound}, together with the invariance of 
$t^{(\mathcal B)}_{\boldsymbol\ell}(H,\cdot)$ under $\Sigma_{\mathcal B}$.
\end{proof}

\begin{remark}[General graphs]\label{rem:general-H}
Repeating the previous steps for a general simple graph $H$ with $V(H)=[m]$ and $E$ edges, one can express the analog of \eqref{eq:K3-inc-1} as
\[
P_s(\mathbf x):=\prod_{r\le s} g_1(x_{a_r},x_{b_r})\prod_{r>s} g_2(x_{a_r},x_{b_r}),
\qquad s=0,1,\dots,E,
\]
thus obtaining the decomposition (c.f. \eqref{eq:K3-sum-errors})
\begin{equation}\label{eq:Decomp}
|t^{(\mathcal B)}_{\boldsymbol\ell}(H,g_1)-t^{(\mathcal B)}_{\boldsymbol\ell}(H,g_2)|
=\sum_{s=1}^E I_s.
\end{equation}
Each $I_s$ is the absolute value of an integral whose integrand contains exactly one factor $(g_1-g_2)(x_{a_s},x_{b_s})$
and all other factors are in $[0,1]$.
Treating $(x_{a_s},x_{b_s})$ as fixed parameters and integrating over the remaining $m-2$ variables,
one obtains, for each $s\in E$, a representation analogous to \eqref{bound_I3}:
\[
I_s
\le \|g_1-g_2\|_\square.
\]
Finally, from \eqref{eq:Decomp} we get
\[
\bigl|t^{(\mathcal B)}_{\boldsymbol\ell}(H,g_1)-t^{(\mathcal B)}_{\boldsymbol\ell}(H,g_2)\bigr|
\le |\mathcal{E}(H)|\,\|g_1-g_2\|_\square,
\]
and \eqref{ineq:comp_metrics} allows to conclude as before. 
\end{remark}

\subsection{Alternative derivation of the free energy} \label{alternative_der_fe}
\begin{proof}[Proof of Thm.~\ref{thm:LDP-free-energy} (free energy)]
We aim to show that
$$
\lim_{n\to\infty}\frac{1}{n^2}\ln Z^{(\bf{B})}_{n;\boldsymbol{\alpha},\bf{h}} = f^{(\mathcal{B})}_{\boldsymbol{\alpha},\bf{h}},
$$
where $f^{(\mathcal{B})}_{\boldsymbol{\alpha},\bf{h}}$ coincides with the r.h.s. of \eqref{varpr}.
This conclusion might be already suggested by \eqref{eq:PF_R}, as an application of Laplace’s method (see, e.g. \cite[Subsec. 3.3.2]{T}).
A rigorous and more direct argument, however, can be obtained by adapting the proof in \cite[Thm. 3.1]{CD}.
Indeed, the two key ingredients of the strategy, namely the continuity of subgraph densities
and the large deviation principle for the sequence of Erd\H{o}s-R\'enyi measures, %
remain valid in our setting.
First, we observe that combining representation \eqref{eq:part_fct_sum1} together with \eqref{ineq:boundR}, we get
\begin{align}
\exp{[- 2\gamma \eta_{n}({\bf{B}}) n^2]}&\sum_{X\in \mathcal{A}^{({\bf{B}})}_n}\exp{[n^2U^{(\mathcal{B})}_{\boldsymbol{\alpha}, \bf{h}}(X)]} \leq Z^{(\bf{B})}_{n;\boldsymbol{\alpha},\bf{h}}=
\sum_{X\in \mathcal{A}^{(\bf{B})}_{n}}\exp{[n^2U^{(\mathcal{B}_n)}_{\boldsymbol{\alpha}, \bf{h}}(X)]}\\
&=\sum_{X\in \mathcal{A}^{(\bf{B})}_{n}}\exp{[n^2U^{(\mathcal{B})}_{\boldsymbol{\alpha}, \bf{h}}(X)]}\exp{[n^2R_{n;\boldsymbol{\alpha}, \bf{h}}(X)]}\\
&\leq \exp{[ 2\gamma \eta_{n}({\bf{B}}) n^2]}\sum_{X\in \mathcal{A}^{(\bf{B})}_{n}}\exp{[n^2U^{(\mathcal{B})}_{\boldsymbol{\alpha}, \bf{h}}(X)]}. 
\end{align}
Since $\eta_n(\mathbf B)\to 0$ as $n\to\infty$, it suffices to analyze the term
\[
\Psi_n:= \frac{1}{n^2}\ln \sum_{X\in \mathcal{A}^{(\bf{B})}_{n}}\exp{[n^2U^{(\mathcal{B})}_{\boldsymbol{\alpha}, \bf{h}}(X)]},
\]
as the contribution of the remaining terms vanishes in the limit $n\to\infty$.
Fix $\varepsilon>0$. 
Recall that $U^{(\mathcal B)}_{\boldsymbol{\alpha},\mathbf h}$ can be viewed as a function 
$\widetilde{\mathcal{W}}_{\mathcal{B}}^{(k)} \to \mathbb{R}$, and that it is bounded. Therefore, there is a finite set $A$ such that the intervals 
$\{(a, a + \varepsilon) : a \in A\}$ cover the range of $U^{(\mathcal B)}_{\boldsymbol{\alpha},\mathbf h}$. For each $a \in A$, let 
\begin{align}
\widetilde{\mathcal{F}}^{(a,\mathcal{B})} &:= \bigl(U^{(\mathcal B)}_{\boldsymbol{\alpha},\mathbf h}\bigr)^{-1}
([a, a + \varepsilon])\subseteq \widetilde{\mathcal{W}}_{\mathcal{B}}^{(k)}\label{inv_set_U}\\
\widetilde{\mathcal{F}}^{(a,\mathcal{B})}_n &:=\{X^{({\bf{B}})}\in \mathcal{A}^{({\bf{B}})}_n: \tau(g_{\mathcal{B}}^{X})\in \widetilde{\mathcal{F}}^{(a,\mathcal{B})} \}.
\end{align}
By the continuity of $U^{(\mathcal B)}_{\boldsymbol{\alpha},\mathbf h}$, each $\widetilde{\mathcal{F}}^{(a,\mathcal{B})}$ is closed. Now,
\begin{align}
e^{n^2 \Psi_n} &= \sum_{X\in \mathcal{A}^{({\bf{B}})}_n}e^{n^2 U^{(\mathcal B)}_{\boldsymbol{\alpha},\mathbf h}(g^{X}_{\mathcal{B}})}=\sum_{a \in A} \sum_{X^{({\bf{B}})}\in \widetilde{\mathcal{F}}^{(a,\mathcal{B})}_n}e^{n^2 U^{(\mathcal B)}_{\boldsymbol{\alpha},\mathbf h}(g^{X}_{\mathcal{B}})}\leq \sum_{a\in A}e^{n^2(a+\varepsilon)}|\widetilde{\mathcal{F}}^{(a,\mathcal{B})}_n| \notag\\
&\leq |A|\sup_{a\in A}e^{n^2(a+\varepsilon)}|\widetilde{\mathcal{F}}^{(a,\mathcal{B})}_n| .\label{ineq:a}
\end{align}
By Lem.~\ref{ER_LDP} (i.e. the \ER LDP on the space $\widetilde{\mathcal{W}}_{\mathcal{B}}^{(k)}$), 
\begin{equation}\label{ldp_ER_inf}
\limsup_{n \to \infty} \frac{\ln |\widetilde{\mathcal{F}}^{(a,\mathcal{B})}_n|}{n^2} \leq \frac{\ln 2}{2} - \inf_{\tilde{h} \in \widetilde{\mathcal{F}}^{(a,\mathcal{B})}} \mathcal{I}^{(\mathcal{B})}_{\frac{1}{2}}(\tilde{h}) = -\frac{1}{2}\inf_{\tilde{h} \in \widetilde{\mathcal{F}}^{(a,\mathcal{B})}} \mathcal{I}^{(\mathcal{B})}(\tilde{h}).
\end{equation}

Hence, we get
\begin{align}
\limsup_{n\to\infty}\Psi_n&= \limsup_{n\to\infty}\frac{1}{n^2}\ln e^{n^2\Psi_n}\notag\\
&\overset{\eqref{ineq:a}}{\leq}
\limsup_{n \to \infty} \frac{\ln(A)}{n^2}+ \sup_{a\in A}\left[\limsup_{n \to \infty} \frac{1}{n^2}\ln e^{n^2(a+\varepsilon)}+ \limsup_{n \to \infty} \frac{\ln|\widetilde{\mathcal{F}}^{(a,\mathcal{B})}_n|}{n^2} \right] \notag\\
&\overset{\eqref{ldp_ER_inf}}{=}\sup_{a \in A} \left( a + \varepsilon - \frac{1}{2}\inf_{\tilde{h} \in \widetilde{\mathcal{F}}^{(a,\mathcal{B})}} \mathcal{I}^{(\mathcal{B})}(\tilde{h}) \right). \label{ineq:psi:prelim}
\end{align}
Now, from \eqref{inv_set_U}, each $\tilde{h} \in \widetilde{\mathcal{F}}^{(a,\mathcal{B})}$ satisfies $U^{(\mathcal B)}_{\boldsymbol{\alpha},\mathbf h}(\tilde{h}) \geq a$. Consequently,
\begin{align*}
&\sup_{\tilde{h} \in \widetilde{\mathcal{F}}^{(a,\mathcal{B})}} \left(U^{(\mathcal B)}_{\boldsymbol{\alpha},\mathbf h}(\tilde{h}) - \frac{1}{2}\mathcal{I}^{(\mathcal{B})}(\tilde{h})\right) \geq \sup_{\tilde{h} \in \widetilde{\mathcal{F}}^{(a,\mathcal{B})}} \left(a - \frac{1}{2}\mathcal{I}^{(\mathcal{B})}(\tilde{h})\right)\\
&\hspace{3cm}= a + \frac{1}{2}\sup_{\tilde{h} \in \widetilde{\mathcal{F}}^{(a,\mathcal{B})}} [-\mathcal{I}^{(\mathcal{B})}(\tilde{h})] = a - \frac{1}{2}\inf_{\tilde{h} \in \widetilde{\mathcal{F}}^{(a,\mathcal{B})}} \mathcal{I}^{(\mathcal{B})}(\tilde{h}),
\end{align*}
and rearranging terms 
\begin{equation}\label{ineq:ratef}
- \frac{1}{2}\inf_{\tilde{h} \in \widetilde{\mathcal{F}}^{(a,\mathcal{B})}} \mathcal{I}^{(\mathcal{B})}(\tilde{h})\leq -a + \sup_{\tilde{h} \in \widetilde{\mathcal{F}}^{(a,\mathcal{B})}} \left(U^{(\mathcal B)}_{\boldsymbol{\alpha},\mathbf h}(\tilde{h}) - \frac{1}{2}\mathcal{I}^{(\mathcal{B})}(\tilde{h})\right).
\end{equation}
Substituting \eqref{ineq:ratef} into \eqref{ineq:psi:prelim} we get
\begin{align}\label{ineq:feA}
\limsup_{n\to\infty}\Psi_n &\leq \varepsilon + \sup_{a \in A} \sup_{\tilde{h} \in \widetilde{\mathcal{F}}^{(a,\mathcal{B})}} \left(U^{(\mathcal B)}_{\boldsymbol{\alpha},\mathbf h}(\tilde{h}) - \frac{1}{2}\mathcal{I}^{(\mathcal{B})}(\tilde{h})\right)\notag\\
&= \varepsilon + \sup_{\tilde{h} \in \widetilde{\mathcal{W}}_{\mathcal{B}}^{(k)}} \left(U^{(\mathcal B)}_{\boldsymbol{\alpha},\mathbf h}(\tilde{h}) - \frac{1}{2}\mathcal{I}^{(\mathcal{B})}(\tilde{h})\right).
\end{align}
Similarly,
for each $a \in A$, one can define 
$\widetilde{\mathcal{O}}^{(a,\mathcal{B})} := \bigl(U^{(\mathcal B)}_{\boldsymbol{\alpha},\mathbf h}\bigr)^{-1}
((a, a + \varepsilon))$. By retracing the same steps (and using in \eqref{ldp_ER_inf} the opposite inequality for the $\liminf$ of open sets provided by the LDP) one finally concludes that 
\begin{equation}\label{ineq:feB}
\limsup_{n\to\infty}\Psi_n \geq
- \varepsilon + \sup_{\tilde{h} \in \widetilde{\mathcal{W}}_{\mathcal{B}}^{(k)}} \left(U^{(\mathcal B)}_{\boldsymbol{\alpha},\mathbf h}(\tilde{h}) - \frac{1}{2}\mathcal{I}^{(\mathcal{B})}(\tilde{h})\right).
\end{equation}
Since $\varepsilon$ is arbitrary in \eqref{ineq:feA} and \eqref{ineq:feB}, this completes the proof. 
\end{proof}

\bibliographystyle{abbrv}
\bibliography{biblio}
\end{document}